\documentclass{siamltex}
\usepackage{amsfonts,amsmath,amssymb}
\usepackage{mathrsfs,mathtools}
\usepackage{enumerate}
\usepackage{hyperref}
\usepackage{esint}
\usepackage{graphicx}
\usepackage{bm}
\usepackage{commath}
\usepackage{esint}
\DeclareMathAlphabet{\mathpzc}{OT1}{pzc}{m}{it}

\usepackage{color}
\newcommand{\HA}[1]{\textcolor{red}{#1}}


%
\renewcommand{\grad}{\nabla}

\newtheorem{remark}[theorem]{Remark}


\newcommand{\bbN}{\mathbb{N}}

\newcommand{\bbR}{\mathbb{R}}

\newcommand{\calA}{\mathcal{A}}
\newcommand{\calB}{\mathcal{B}}
\newcommand{\calC}{\mathcal{C}}
\newcommand{\calD}{\mathcal{D}}
\newcommand{\calE}{\mathcal{E}}
\newcommand{\calF}{\mathcal{F}}

\newcommand{\calI}{\mathcal{I}}

\newcommand{\calK}{\mathcal{K}}
\newcommand{\calL}{\mathcal{L}}

\newcommand{\calP}{\mathcal{P}}

\newcommand{\calR}{\mathcal{R}}
\newcommand{\calS}{\mathcal{S}}
\newcommand{\calT}{\mathcal{T}}
\newcommand{\calU}{\mathcal{U}}
\newcommand{\calV}{\mathcal{V}}

\newcommand{\mat}[1]{\bm{{#1}}}

\newcommand{\matI}{\mat I}

\newcommand{\matM}{\mat M}

\newcommand{\matP}{\mat P}

\newcommand{\matT}{\mat T}




\newcommand{\pe}{\textup{\textsf{p}}}
\newcommand{\qe}{\textup{\textsf{q}}}
\newcommand{\ue}{\textup{\textsf{u}}}
\newcommand{\ve}{\textup{\textsf{v}}}
\newcommand{\we}{\textup{\textsf{w}}}
\newcommand{\xe}{\textup{\textsf{x}}}
\newcommand{\ye}{\textup{\textsf{y}}}

\newcommand{\Be}{\textup{\textsf{B}}}
\newcommand{\De}{\textup{\textsf{D}}}

\newcommand{\Qe}{\textup{\textsf{Q}}}
\newcommand{\Ue}{\textup{\textsf{U}}}


\newcommand{\bdy}{\partial}

\newcommand{\ceil}[1]{\left\lceil{#1}\right\rceil}
\newcommand{\cembed}[2]{#1\hookrightarrow #2}

\newcommand{\clos}[1]{{\overline{#1}}}
\DeclareMathOperator{\codim}{codim}

\newcommand{\composed}{\circ}

\DeclareMathOperator{\curl}{curl}

\newcommand{\CCinf}[1]{C_{c}^\infty\of{#1}}

\DeclareMathOperator{\distText}{dist}
\newcommand{\dist}{\distText}
\DeclareMathOperator{\divgText}{div}

\newcommand{\divg}{\divgText}

\newcommand{\dual}[1]{{#1}^*}

\newcommand{\eclass}[1]{[#1]}

\providecommand{\grad}{\nabla}

\newcommand{\gradS}[1]{\vec\varepsilon\of{\vec{#1}}}

\newcommand{\halfspace}{{\bbR^n_{-}}}

\DeclareMathOperator{\ind}{ind}

\newcommand{\Linf}[1]{{L^\infty\of{#1}}}

\newcommand{\Lr}[1]{L^{r}\of{#1}}
\newcommand{\LrZ}[1]{L^r_{0}\of{#1}}
\newcommand{\Lrloc}[1]{L^{r}_{\loc}\of{#1}}
\newcommand{\Lrp}[1]{L^{r'}\of{#1}}

\newcommand{\LrpZ}[1]{L^{r'}_{0}\of{#1}}

\newcommand{\lapl}{\Delta}

\newcommand{\loc}{\textnormal{loc}}

\newcommand{\normL}[3]{\norm{#1}_{L^{#2}\of{#3}}}

\newcommand{\normLi}[2]{\norm{#1}_\Linf{#2}}

\newcommand{\normLr}[2]{\norm{#1}_{\Lr{#2}}}

\newcommand{\normS}[4]{\norm{#1}_{\sob{#2}{#3}{#4}}}

\newcommand{\normSH}[4]{\norm{#1}_{\sobH{#2}{#3}{#4}}}

\newcommand{\normSZ}[4]{\abs{#1}_{\sob{#2}{#3}{#4}}}

\newcommand{\of}[1]{(#1)}
\newcommand{\pair}[1]{\left\langle #1 \right\rangle}

\renewcommand{\restriction}{\big\vert}

\newcommand{\seq}[1]{\cbr{#1}}

\newcommand{\sob}[3]{W^{#1}_{#2}\of{#3}}
\newcommand{\sobH}[3]{G^{#1}_{#2}\of{#3}}
\newcommand{\sobHZ}[3]{\mathring G^{#1}_{#2}\of{#3}}

\newcommand{\sobZ}[3]{{\mathring{W}^{#1}_{#2}\of{#3}}}

\newcommand{\transpose}{^\top}

\renewcommand{\vec}[1]{\bm{{#1}}}
\newcommand{\vecH}[1]{\bm{{\hat{#1}}}}

\newcommand{\vnu}{{\vec{\nu}}}





%
%

\renewcommand{\divg}[1]{\divgText #1}

\begin{document}

\title{The Stokes problem with Navier slip boundary condition: 
  Minimal fractional Sobolev Regularity of the domain\thanks{HA has been partially supported by NSF grants DMS-1109325 and DMS-1521590. RHN has been partially supported by NSF grants DMS-1109325 and DMS-1411808. PS has been partially supported by NSF grant DMS-1109325.}}

\author{Harbir Antil\thanks{Department of Mathematical Sciences, George Mason University, Fairfax, VA 22030, USA ({\tt hantil@gmu.edu})}
\and 
Ricardo H. Nochetto\thanks{Department of Mathematics and Institute for Physical Science and Technology, University of Maryland
College Park, MD 20742, USA ({\tt rhn@math.umd.edu})} 
\and 
Patrick Sodr{\'e}\thanks{Department of Mathematics, University of Maryland
College Park, MD 20742, USA ({\tt sodre@math.umd.edu})}.}

\pagestyle{myheadings}
\thispagestyle{plain}
\markboth{H.~Antil, R.H.~Nochetto, P.~Sodr{\'e}}{Stokes with Navier slip conditions: $W^{2-1/s}_{s}$
  Domains}


\maketitle

\begin{abstract}
We prove well-posedness in reflexive Sobolev spaces of weak
  solutions to the stationary Stokes problem with Navier slip boundary
  condition over bounded domains $\Omega$ of $\bbR^n$ of class
  $W^{2-1/s}_s$, $s>n$. Since such domains are of class $C^{1,1-n/s}$,
  our result improves upon the recent one by Amrouche-Seloula, who
  assume $\Omega$ to be of class $C^{1,1}$. We deal with the slip
  boundary condition directly via a new localization technique, which
  features domain, space and operator decompositions.
  To flatten the boundary of $\Omega$ locally, we construct a novel
  $W^2_s$ diffeomorphism for $\Omega$ of class $W^{2-1/s}_s$. The
  fractional regularity gain, from $2-1/s$ to $2$, guarantees that the
  Piola transform is of class $W^1_s$. This allows us to transform
  $W^1_r$ vector fields without changing their regularity, provided
  $r\le s$, and preserve the unit normal which is H\"older. It is in
  this sense that the boundary regularity $W^{2-1/s}_s$ seems to be minimal.
\end{abstract}

\begin{keywords}
Stokes problem, Navier slip boundary condition, reflexive Sobolev space, fractional Sobolev domain, localization approach, Piola transform.
\end{keywords}

\begin{AMS}
35A15,  	
35Q35,  	
35R35,  	
76A02,  	
76D03     	
\end{AMS}

\section{Introduction}
A bounded connected domain $\Omega$ in $\bbR^n$ ($n \geq 2$) is said to be of fractional Sobolev class $W^{2-1/s}_{s}$, ($n < s < \infty$) whenever its boundary $\bdy\Omega$ is locally the graph of a function $\omega$ in $\sob{2-1/s}{s,\loc}{\bbR^{n-1}}$. 
We refer to \cite{MDelfour_JZolesio_1998a,MDelfour_JZolesio_2011a}
for an equivalent definition via the signed distance function.
Our primary goal is to establish well posenedness (in the sense of Hadamard)
of the following Stokes problem
\begin{subequations}\label{eq:stokes_full}
	\begin{equation}\label{eq:stokes}
		-\divg{\vec\sigma\del{\vec u, p}} = \vec f, \quad \divg{\vec u} = g \quad \mbox{in } \Omega,	\end{equation}
together with the Navier slip boundary condition, 
	\begin{equation}\label{eq:stokes_bc}
	\vec u\cdot\vec{\nu} = \phi,\quad \beta \matT \vec u + \matT\transpose\vec\sigma\del{\vec u, p}\vec\nu = \vec\psi \quad \mbox{on } \bdy\Omega,
	\end{equation}
\end{subequations}
in the reflexive Sobolev spaces $W^1_r(\Omega) \times L^r(\Omega)$ with
$s' = s/(s-1) \le r \le s$.
Here $\vec\sigma = \eta\vec\varepsilon\of{\vec u} - \vec Ip$ is the stress tensor, 
$\eta$ is a constant viscosity parameter (Newtonian fluid), $\gradS u
= (\grad \vec u + \grad \vec u\transpose)$ is the strain tensor (or
symmetric gradient), $\vec\nu$ is the exterior unit normal to
$\bdy\Omega$, $\beta\of{\vec x} \geq 0$ is the friction coefficient,
and $\vec T = \vec I - \vnu\otimes\vnu$ is the projection operator
onto the  tangent plane of $\bdy\Omega$. Notice that when $\phi = 0$,
$\vec \psi = 0$, and $\beta = 0$ in \eqref{eq:stokes_bc} then the
fluid slips along the boundary. The well-known no-slip condition $\vec
u = \vec 0$ can be viewed as the limit of \eqref{eq:stokes_bc} when
$\phi = 0$, $\vec \psi = \vec 0$, and $\beta \rightarrow \infty$.
The boundary condition \eqref{eq:stokes_bc} is appropriate
in dealing with free boundary problems;
we refer to \cite{BJJin_MPadula_2004a, BJJin_2005a, VASolonnikov_1995a,
  MPadula_VASolonnikov_2010, HBae_2011a, JANitsche_1986,
  PFAntonietti_PFFadel_MVerani_2011a, HAntil_RHNochetto_PSodre_2014b} and related references \cite{CAmrouche_NEHSeloula_2011b,MPadula_VASolonnikov_2010}.

In contrast to \eqref{eq:stokes_bc},
the no-slip condition $\vec u = \vec 0$ has received a great deal of attention in
the literature. It turns out that most of what is valid for the
Poisson equation $-\Delta u = \divg \vec f$
extends to the Stokes equation with no-slip condition.
The a priori bound in $W^1_r(\Omega)$
\begin{equation}\label{weak-laplace}
\|u\|_{W^1_r(\Omega)} \le C \|\vec f\|_{L^r(\Omega)}
\end{equation}
is valid for the Laplacian
provided $\partial\Omega$ is Lipschitz and $s'\le r \le s$ for
$s>n$; see \cite[Theorem 1.1]{MR1331981} for Dirichlet condition
and \cite[Theorem~1.6]{DZZAnger_2000} for Neumann condition.
Moreover, the range of
$r$ becomes $1<r<\infty$ provided $\partial\Omega$ is $C^1$; see
\cite[Theorem 1.1]{MR1331981} for the Dirichlet problem.
A similar bound with $1 < r < \infty$ is valid for the Stokes system
with Dirichlet condition provided $\Omega$ is Lipschitz with a small
constant, and in particular for $C^1$ domains \cite{GPGaldi_CGSimader_HSohr_1994}.
This extends to Besov spaces on Lipschitz domains for the
Stokes system with  Dirichlet
and Neumann conditions  \cite{MMitrea_MWright_2012}; we refer to
\cite{MR975122, MR975121} for earlier contributions.
It is thus natural to wonder whether such estimates
would extend to the Navier boundary condition \eqref{eq:stokes_bc}
with a domain regularity weaker than $C^{1,1}$. We first mention some
relevant literature and next argue that the domain regularity
$W^{2-1/s}_s$ seems to be minimal.

It is well-known that in case $\phi = 0, \vec \psi = \vec 0$, and $\beta = 0$ one can write \eqref{eq:stokes_bc} equivalently as 
\begin{equation}\label{eq:curlf}
 \vec u\cdot\vec{\nu} = 0, \quad \vec\nu \times \curl \vec u = \vec 0 \quad \mbox{on } \bdy\Omega . 
\end{equation}
However, such an equivalence is known to hold only when the boundary
is of class $C^2$ \cite[Section~2]{MMitrea_SMonniaux_2009a}.
Starting from \eqref{eq:curlf}, Amrouche--Seloula
\cite{CAmrouche_NEHSeloula_2011b} obtained recently well posedness of
weak, ultra-weak, and strong solutions to \eqref{eq:stokes} in
reflexive Sobolev spaces for $\Omega$ of class $C^{1,1}$. To our
knowledge, this is the best result in the literature.
Our approach deals directly with \eqref{eq:stokes_bc} which makes it
amenable to free boundary problems. We elaborate below on this matter.

The pioneering work on the Stokes problem \eqref{eq:stokes_full} in
H\"older spaces is due to Solonnikov-\v S\v cadilov
\cite{VASolonnikov_VEScadilov_1973}. This was extended by Beir\~ao da
Veiga \cite{HBeiraodaVeiga_2004}, who showed existence of weak and
strong solutions to a generalized Stokes system with $C^{1,1}$-domains
in Hilbert space setting.
We also refer to Mitrea-Monniaux \cite{MMitrea_SMonniaux_2009a} who
proved existence of mild solutions in Lipschitz domains for
the time-dependent Navier-Stokes (NS) equations with boundary condition
\eqref{eq:curlf}.
A survey for the stationary and the time-dependent Stokes and
NS equations with slip boundary condition is given by Berselli \cite{LCBerselli_2010b}.
Finally, well posedness for several variants of the time-dependent
NS equations are shown by M\'{a}lek and collaborators for
$C^{1,1}$ domains
\cite{MBulicek_EFeireisl_JMalek_2009a,MBulicek_PGwiazda_JMalek_ASGwiazda_2012a,%
MBulicek_JMalek_KRRajagopal_2007a,MBulicek_JMalek_KRRajagopal_2009a}.

We now give three reasons why we find the
regularity $W^{2-1/s}_s$ of $\partial\Omega$ (nearly) {\it minimal} for
\eqref{eq:stokes_full}, and stress that they all hinge on the critical role played
by the unit normal $\vec \nu$.
We start with the boundary condition $\vec u \cdot \vec \nu = \phi$ in
\eqref{eq:stokes_bc}. Given the scalar function $\phi \in W^{1-1/r}_r(\partial\Omega)$,
in \S \ref{s:lifting} we construct a
vector-valued extension $\vec \varphi \in W^1_r(\Omega)$ such that
$\vec \varphi = \phi\vec \nu$ on $\partial\Omega$ in the sense of
traces along with $\|\vec \varphi \|_{W^1_r(\Omega)} \le C
\|\phi\|_{W^{1-1/r}_r(\partial\Omega)}$.
One possibility is to solve
the following auxiliary problem subject to the compatibility condition
$\int_\Omega g + \int_{\partial\Omega} \phi = 0$
\begin{equation}\label{aux-prob}
  -\Delta \xi = g \quad \textrm{in } \Omega
  \qquad
  \partial_\nu \xi = \phi \quad \textrm{on } \partial\Omega,
\end{equation}
and set $\vec\varphi:=\nabla\xi$. The requisite regularity $\xi\in W^2_r(\Omega)$,
whence $\vec\varphi \in W^1_r(\Omega)$, fails in general for
$\partial\Omega$ Lipschitz or even $C^1$ \cite{DJerison_CEKenig_1989a}.
If $\partial\Omega$ is of class $W^{2-1/s}_s$, instead, then we can
extend each component of $\vec\nu \in W^{1-1/s}_s$ and thus $\vec\nu$
to a function in $W^1_s(\Omega)$ (still denoted $\vec \nu$). If we
also denote $\phi\in W^1_r(\Omega)$ an extension of $\phi$, then a
simple calculation shows that $r\le s$ and $s>n$ yields
$\phi\vec \nu \in W^1_r(\Omega)$ and 
\begin{equation}\label{apriori}
\|\vec \varphi \|_{W^1_r(\Omega)} \le C
\|\phi\|_{W^{1-1/r}_r(\partial\Omega)} \|\vec \nu\|_{W^{1-1/s}_s(\partial\Omega)}.
\end{equation}
Setting $\vec v := \vec u - \vec \varphi$ we get a problem for $\vec v$
similar to \eqref{eq:stokes_full} with modified data with the same
regularity as $\vec f, g$ and $\vec\psi$ but $\vec v\cdot\vec \nu = 0$. We
study this problem in \S\S \ref{s:hilbert_case}--\ref{s:sobolev_case}.

Our localization technique is
the second instance for $\partial\Omega$ to be of class $W^{2-1/s}_s$.
In \S \ref{s:sobolev_piola} we construct a local $W^2_s$ diffeomorphism
$\vec\Psi$, such that $\vec\Psi^{-1}$
flattens $\partial\Omega$, by suitably extending the function $\omega\in W^{2-1/s}_s$
describing $\partial\Omega$ locally. We exploit this small gain of
regularity, from $2-1/s$ to $2$,
to define the {\it Piola transform} inverse
$\vec P^{-1}:=\det(\nabla\vec\Psi^{-1})\nabla\vec\Psi^{-1} \in W^1_s(\Omega)$,
which maps vector fields $\vec v\in W^1_r(\Omega)$ into vector fields
$(\vec P^{-1} \vec v) \circ \vec \Psi \in W^1_r(\bbR^n_-)$ with the same
divergence and normal trace in $\mathbb{R}^n_+$. This is instrumental to reduce
\eqref{eq:stokes_full} locally to a Stokes problem in $\mathbb{R}^n_-$ with variable
coefficients and Navier condition
\eqref{eq:stokes_bc} on $\{x_n=0\}$, and next make use of reflection
arguments. We develop the localization framework in \S \ref{s:sobolev_case}.

Our primary interest in studying \eqref{eq:stokes_full} with minimal domain
regularity is the Stokes problem defined in $\Omega$
\[
\Omega := \{(x,y)\in\mathbb{R}^2: 0 < x < 1, 0 < y < 1 + \gamma(x)\},
\quad
\Gamma := \{(x,1+\gamma(x)): 0<x<1\}
\]
with free boundary $\Gamma$ given by the graph of $1+\gamma$ and
$\gamma(0)=\gamma(1)=0$. The
free boundary condition on $\Gamma$ corresponds to surface tension
effects, is overdetermined, and reads
\begin{equation}\label{surf-tension}
  \vec u\cdot\vec \nu = 0,
  \quad
  \vec\sigma(\vec u,p)\vec \nu = \chi H[\gamma] \vec\nu,
\end{equation}
with $H[\gamma] = - \dif_{\, x}\big(\frac{\dif_{\, x}\gamma}{Q[\gamma]}\big)$
being the curvature of
$\Gamma$, $Q[\gamma] := \sqrt{1+|\dif_{\, x}\gamma|^2}$,
and $\chi>0$ a surface tension coefficient, whereas
a Dirichlet condition is imposed on the rest of $\partial\Omega$.
We realize that \eqref{surf-tension} includes \eqref{eq:stokes_bc}
besides the equation for the balance of forces
$\vec\nu^T\vec\sigma(\vec u,p)\vec \nu = \chi H[\gamma]$ which
determines the location of $\Gamma$.
To formulate this problem variationally, we multiply the momentum equation
in \eqref{eq:stokes} by $\vec v$, integrate by parts
\begin{equation*}
\int_\Omega \vec\sigma(\vec u,p) : \nabla\vec v - \vec f \cdot \vec v
= \int_\Gamma \vec v \cdot \vec\sigma(\vec u,p) \vec \nu
= \int_\Gamma H[\gamma] \vec v \cdot \vec \nu
= \int_0^1 \frac{\dif_{\, x}\gamma \dif_{\, x} v}{Q[\gamma]} 
\end{equation*}
and use $\gamma(0)=\gamma(1)=0$, where $v = Q[\gamma] \, \vec v \cdot \vec \nu$.
We emphasize again the critical role
that $\vec \nu$ plays: applying the Implicit Function Theorem enables us to prove that
$\gamma \in W^{2-1/s}_s(0,1)\subset W^1_\infty(0,1)$, $s>2$, whence
$\vec v\in W^1_{s'}(\Omega)$ implies $Q[\gamma] \, \vec v \cdot
\vec\nu \in W^{1-1/{s'}}_{s'}(\Gamma)$ as already alluded to in
\eqref{apriori}. This would not be possible with mere Lipschitz
regularity $\gamma\in W^1_\infty(0,1)$. We refer to
\cite{HAntil_RHNochetto_PSodre_2014b} for full details.

For the moment, we will make two simplifications in
\eqref{eq:stokes_bc}. The first one is to treat the frictionless problem,
i.e. $\beta = 0$; we will return to $\beta \neq 0$ in
\secref{s:friction_bc}. The second simplification concerns the non-trivial essential
boundary condition $\phi$, which we address in \secref{s:lifting}
by the lifting argument already mentioned in \eqref{apriori}. 
It is customary for the Stokes system
\eqref{eq:stokes_full} to let the pressure $p$ be defined up to a
constant. Less apparent is that the velocity field kernel, namely
  $\nabla\vec u+\nabla\vec u\transpose=0$, is non-trivial if and only if $\Omega$ is  axisymmetric. More importantly, when this kernel is not empty, it is characterized by a small subspace of the rigid body motions, 
     \begin{equation}\label{eq:kernel_z}
         Z\of{\Omega} := \set{\vec z\of{\vec x} = \vec A\vec x + \vec b : \vec x \in \Omega,\, \mat A = -\mat A\transpose \in \bbR^{n\times n},\, \vec b \in \bbR^n,\, \vec z\cdot \vnu\restriction_{\bdy\Omega} = 0} ;
     \end{equation}
see \cite[Appendix A]{MLewicka_SMuller_2011} and \cite[Appendix I]{HBeiraodaVeiga_2004}
for more details.

The standard variational formulation of
\eqref{eq:stokes_full} entails dealing with the Stokes bilinear form
\begin{equation}\label{eq:stokes_form}
       \calS_\Omega\of{\vec u, p}\of{\vec v, q} 
            := \int_\Omega \eta \gradS u: \gradS v - p\divg{\vec v} + q\divg{\vec u}.
\end{equation}
which we obtain from \eqref{eq:stokes} upon formal mutiplication by a test pair
$(\vec v, q)$ and integration by parts. This also leads to the forcing
term
\begin{equation*}
  \calF\of{\vec v, q} 
   := \int_{\Omega}\vec f \cdot \vec v + \int_{\bdy\Omega} \vec\psi \cdot \gamma_0 \vec v + \int_{\Omega} gq,
\end{equation*}
upon invoking \eqref{eq:stokes_bc}. To formulate
\eqref{eq:stokes_full} variationally we need two function spaces.
The first space is that of trial functions $X_r\of\Omega$, which we define as 
    \begin{subequations}\label{eq:banach_space_X_all}
    \begin{equation}\label{eq:banach_space_X}
            X_r\of{\Omega} := V_r\of\Omega \times \LrZ{\Omega} \quad s' \leq r \leq s,
    \end{equation}
with  
$V_r\of\Omega := \set{\vec v \in \sob1r\Omega / Z\of\Omega: \vec v\cdot\vnu = 0}$, $\LrZ{\Omega} := \Lr\Omega / \bbR$ and $1/s + 1/s' = 1$. We have the following characterization of $V_r$ (see \cite[p. 4]{RGDuran_MAMuschietti_2004a}): 
a vector $\vec v \in V_r(\Omega)$ if and only if
\[
   \vec  v \in \Big\{\vec v \in \sob1r\Omega : \vec v \cdot \vec \nu = 0, \ 
                \int_\Omega (\partial_{x_i} v^j - \partial_{x_j} v^i) \ dx = 0 \
               \forall i,j = 1, \dots, n \Big\} .
\]
It follows from its product definition that $X_r\of\Omega$ is complete under the norm  
    \begin{equation}\label{eq:banach_space_X_norm}          
        \norm{(\vec v, p)}_{X_r\of{\Omega}} := \normS{\vec v}1r\Omega + \normLr{p}\Omega.
    \end{equation}

The second function space is that of prescribed data, which we take to be $\dual{X_{r'}\of\Omega}$, the topological dual of $X_{r'}\of\Omega$ where $1/r + 1/r' = 1$. Moreover, $\dual{X_{r'}\of\Omega}$ is complete under the operator norm
    \begin{equation}\label{eq:banach_space_dual_X_norm}
        \norm{\calF}_{\dual{X_{r'}\of\Omega}}
            = \sup_{\norm{\of{\vec v, q}}_{X_{r'}\of{\Omega}} = 1} 
                \abs{\calF\of{\vec v, q}}.
    \end{equation}
\end{subequations}
We note that $\norm{\calF}_{\dual{X_{r'}\of\Omega}}$ is finite
provided $g$ is in $\Lr\Omega$, $\vec f$ belongs to
$\dual{V_{r'}\of\Omega}$, $\vec \psi$ lies in the dual of the trace
space $\gamma_0\of{V_{r'}\of\Omega}$; moreover
all three functions must satisfy the compatibility conditions 
    \begin{equation}\begin{aligned}\label{eq:stokes_compatibility}
 		\int_\Omega g &= 0 , \quad   
 		\int_\Omega \vec f \cdot \vec z + \int_{\bdy\Omega} \vec\psi \cdot \gamma_0 \vec z &= 0 \quad \forall \vec{z} \in Z\of\Omega.
 	\end{aligned}\end{equation}
The variational formulation of the strong equations
\eqref{eq:stokes_full} finally reads: solve
\begin{equation}
       \label{eq:stokes_abstract}
       (\vec u,p) \in X_r(\Omega):
       \quad \calS_\Omega\of{\vec u, p}\of{\vec v, q} 
           = \calF\of{\vec v, q}  \quad \forall (\vec v, q) \in X_{r'}\of{\Omega}.
\end{equation}
With the functional setting in place, we state our main result.
\begin{theorem}[well posedness of \eqref{eq:stokes_full}]\label{T:wellposed}
Let $\Omega$
be a bounded domain of class $W^{2-1/s}_s$ with $s > n$, and let
$s' \leq r \leq s$. For every $\calF \in \dual{X_{r'}\of\Omega}$ there
exists a unique solution $(\vec u, p) \in X_{r}\of{\Omega}$ of
\eqref{eq:stokes_abstract} such that
    \begin{equation}
       \label{eq:stokes_apriori}
       \norm{(\vec u,p)}_{X_r\of{\Omega}} 
           \leq  C_{\Omega, \eta, n, r}\norm{\calF}_{\dual{X_{r'}\of{\Omega}}}.
    \end{equation}
\end{theorem}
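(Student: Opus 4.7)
The plan is to peel off the three layers flagged in the introduction. First, I would invoke the lifting construction of Section~\ref{s:lifting} to produce $\vec\varphi\in\sob{1}{r}{\Omega}$ with $\vec\varphi\cdot\vec\nu=\phi$ on $\partial\Omega$ and the bound \eqref{apriori}; the substitution $\vec v:=\vec u-\vec\varphi$ reduces \eqref{eq:stokes_full} to the same problem with $\phi\equiv 0$ and data of the same regularity, which is where the constraints $r\leq s$, $s>n$, and $\vec\nu\in\sob{1-1/s}{s}{\partial\Omega}$ are consumed. Second, I would treat the frictionless case $\beta\equiv 0$ first and defer $\beta\geq 0$ to Section~\ref{s:friction_bc}, where the extra term $\int_{\partial\Omega}\beta\,\matT\vec u\cdot\gamma_0\vec v$ is a compact perturbation of $\mathcal{S}_\Omega$ via compactness of the trace into $L^r(\partial\Omega)$; the Fredholm alternative then transfers existence from the frictionless problem, while uniqueness follows from the positivity of $\beta$ combined with the explicit description of $Z(\Omega)$ in \eqref{eq:kernel_z}.

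With these reductions in place, I would split the remaining claim into a Hilbert case and a Banach case. For $r=2$, $\mathcal{S}_\Omega$ is coercive on $V_2(\Omega)$ modulo $Z(\Omega)$ by Korn's inequality for the symmetric gradient on Lipschitz domains, and the divergence--pressure inf--sup condition is classical on such domains; Babu\v{s}ka--Brezzi then yields \eqref{eq:stokes_apriori} at $r=2$. The extension to $s'\leq r\leq s$ is where the localization of Sections~\ref{s:sobolev_piola}--\ref{s:sobolev_case} is deployed: cover $\bar\Omega$ by finitely many balls with a subordinate partition of unity, straighten each boundary chart via the $W^2_s$ diffeomorphism $\vec\Psi$, and pull the local velocity back through the Piola transform $\vec P^{-1}=\det(\nabla\vec\Psi^{-1})\nabla\vec\Psi^{-1}\in W^1_s$. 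Since $\vec P^{-1}$ preserves both the divergence and the normal trace, the local problem on $\bbR^n_-$ becomes a Stokes system with $W^1_s$ variable coefficients and a Navier slip condition on $\{x_n=0\}$. An even/odd reflection across $\{x_n=0\}$ converts this into an interior Stokes problem on $\bbR^n$, for which the classical $W^1_r$--$L^r$ theory of Galdi--Simader--Sohr applies for $1<r<\infty$; the variable-coefficient perturbation is absorbed as a bounded multiplier on $W^1_r$, and the commutators generated by the partition of unity are of lower order and closed by a Neumann-series/bootstrap argument, yielding \eqref{eq:stokes_apriori} for every $r\in[s',s]$.

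The main obstacle will be showing that the Piola-transformed coefficients act as bounded multipliers on $\sob{1}{r}{\bbR^n_-}$ uniformly over the whole range $s'\leq r\leq s$. This is precisely where the fractional regularity gain from $2-1/s$ to $2$ in $\vec\Psi$ is essential: it places $\nabla\vec\Psi$ and $\det(\nabla\vec\Psi^{-1})$ in $W^1_s$, after which the embedding $\sob{1}{s}{\bbR^n_-}\hookrightarrow\Linf{\bbR^n_-}$ (since $s>n$) combined with H\"older's inequality delivers the multiplier property on $\sob{1}{r}{\bbR^n_-}$ precisely when $r\leq s$; this is the sense in which $\partial\Omega\in W^{2-1/s}_s$ is minimal. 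Finally, the reflexive lower range $s'\leq r<2$ is obtained from the upper range $2<r\leq s$ by duality, since transposing \eqref{eq:stokes_abstract} produces a Stokes system in the same class tested against $X_{r'}(\Omega)$ and thus fits within the same localization framework.
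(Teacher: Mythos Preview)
Your overall architecture---lift $\phi$, defer $\beta$ to a compact perturbation, settle $r=2$ via Korn plus inf--sup, then localize with a partition of unity, flatten via the $W^2_s$ diffeomorphism and Piola transform, and invoke the $\bbR^n$/$\halfspace$ theory---matches the paper closely. The identification of the multiplier property of the Piola coefficients (via $\sob1s{\halfspace}\hookrightarrow L^\infty$ when $s>n$) as the reason the range $r\le s$ is sharp is also exactly the point the paper makes.

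The genuine gap is in how you propose to close the localization. You write that the commutators generated by the partition of unity ``are of lower order and closed by a Neumann-series/bootstrap argument.'' But these commutators, which the paper calls $\calK_\zeta$ (from moving the cutoff across $\gradS{\cdot}$ and $\divg{}$) and $\calC$ (from the Piola remainder $\vec\vartheta_{\matP}$), are \emph{not small}: the terms in $\calK_\zeta$ carry a factor $\nabla\zeta=O(\delta^{-1})$ which, after Poincar\'e on the $\delta$-ball, yields an $O(1)$ operator norm, not one that can be summed in a Neumann series. Only the principal variable-coefficient part $\calB$ (the difference between $\vec\varepsilon_{\matP}$ and $\vec\varepsilon$) is genuinely $O(\delta^{1-n/s})$ small and is indeed absorbed by a perturbation-of-identity argument (Lemma~\ref{lem:Kato}, Theorem~\ref{thm:hs_perturbed_wellposed}). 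The paper's mechanism for the remaining terms is different: $\calK_\zeta$ and $\calC$ are shown to be \emph{compact} (Lemmas~\ref{lem:k_compact}--\ref{lem:C_compact}), which allows the construction of a pseudoinverse $\calS_\Omega^\dagger=\sum_i \calR_{\varrho_i}\widetilde\calS_i^{-1}\dual\calR_{\varphi_i}$ and hence finiteness of the index of $\calS_\Omega$. Well-posedness then follows from index theory (Corollary~\ref{cor:h_r_s}) once injectivity of $\calS_\Omega$ and of $\dual\calS_\Omega$ are established separately.

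A second, smaller discrepancy: you obtain the range $s'\le r<2$ ``by duality.'' The paper does use duality, but only to deduce injectivity of $\dual\calS_\Omega$ from that of $\calS_{\Omega,r'}$ (Proposition~\ref{prop:dSinj}). Injectivity of $\calS_\Omega$ itself for $r<2$ is \emph{not} obtained by duality; instead the paper runs a regularity bootstrap (the smoothing sequence of Definition~\ref{def:smooth_seq}, Lemmas~\ref{lem:interior_regularity}--\ref{lem:boundary_regularity}) to show that any homogeneous $X_{s'}$ solution actually lies in $X_2$, where Theorem~\ref{thm:hilbert_case} forces it to vanish. Your plan would need either this bootstrap or a careful argument that $\dual\calS_{\Omega,r}$ is identifiable with $\calS_{\Omega,r'}$ (which the paper does observe, up to a sign in the pressure)---but in either case the existence half still hinges on the index/compactness machinery rather than a Neumann series.
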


We say that the Stokes problem is {\it well-posed} (in the sense of
Hadamard) between the spaces $X_r\of\Omega$ and $\dual{X_{r'}\of\Omega}$
whenever \eqref{eq:stokes_abstract}-\eqref{eq:stokes_apriori} is
satisfied.

The ideas explored in this paper can be summarized as follows.
We develop a new localization technique which features \emph{domain,
space} and \emph{operator} decompositions. Instead of
\eqref{eq:curlf}, we rely solely on
existence and uniqueness of solutions to the Stokes problem in the
whole space $\bbR^n$ for compactly supported data. Finally, we develop
an \emph{index-theoretical} framework to
close the argument \cite[Chapter 27]{PDLax_2002}.
Our approach is general and can be applied to a
wide class of elliptic partial differential equations (PDEs).
After a brief section about notation, 
we split the proof of Theorem \ref{T:wellposed}
into six sections, which describe how the paper is organized:
\begin{description}
    \item[\secref{s:hilbert_case}]
        gives a short proof for the Hilbert space case ($r=2$). 
        The importance of this result is the direct implication of uniqueness for solutions to \eqref{eq:stokes_abstract} when $r \geq 2$ and $\Omega$ is bounded. 
    \item[\secref{s:unbounded}]
        presents fundamental results on well-posedness of the Stokes system
in $\bbR^n$ and the half-space $\halfspace$. These two building blocks are instrumental in constructing a solution of \eqref{eq:stokes_abstract} for $ s' \leq r \leq s$.
     \item[\secref{s:sobolev_piola}]
        constructs a local $W^2_s$ diffeomorphism $\vec\Psi$, whose
        inverse $\vec\Psi^{-1}$ locally flattens $\bdy\Omega$, and analyzes the Piola transform which preserves the essential boundary condition $\vec u \cdot \vnu = 0$.
     \item[\secref{s:sobolev_case}]
         develops a new localization procedure and uses index theory
         to prove the well-posedness of the Stokes system
         \eqref{eq:stokes_full} between the spaces
        $X_r\of\Omega$ and $\dual{X_{r'}\of\Omega}$ for $s' \leq r \leq s$. 
     \item[\secref{s:lifting}]
         deals with the inhomogeneous essential boundary conditions.
     \item[\secref{s:friction_bc}]
         extends the theory to the full Navier boundary condition, i.e. $\beta \neq 0$.
\end{description}

\section{Notation} \label{s:notation}
It will be convenient to distinguish the $n$-th dimension. A vector $\vec x \in \bbR^n$, will be 
denoted by
\[
\vec x = (x^1,\dots,x^{n-1},x^n) = (\vec x',x^n)     , 
\]
with $x^i \in \bbR$ for $i = \overline{1,n}$, $\vec x' \in
\bbR^{n-1}$. We will make a distinction between the reference
  coordinate $(\xe)$ and the physical coordinate $(\vec x)$, such that
  $\vec x = \Psi(\xe)$ where the properties of the map $\Psi$ are listed below.
The symbols $B(\vec x,\delta) \subset \bbR^n$ and $\Be(\xe,\delta) \subset \bbR^n$ will denote the balls of radius $\delta$ centered at $\vec x$ and $\xe$ respectively. Moreover, $\De(\xe',\delta) \subset \bbR^{n-1} \times \{\xe^n\}$ and $\Be_{-}(\xe,\delta) := \Be(\xe,\delta)\cap\bbR^n_{-}$ will be the disc and the lower half ball of radius $\delta$ centered at $\xe'$ and $\xe$ respectively; see \figref{f:RefMap} (right).

\begin{definition}
[$W^{2-1/s}_s$-domain] 
\label{defn:sobolev_domain}
An open and connected set $\Omega$ in $\bbR^n$ is called a $W^{2-1/s}_s$-domain, $s > n$, if at each point $\vec x$ in $\bdy\Omega$ there 
exists  $\delta > 0$ and a function $\omega$ in $\sob{2-1/s}{s,\,\loc}{\bbR^{n-1}}$ such that, after a possible relabeling and reorientation 
of the coordinate axis
    \begin{equation}\label{eq:Omega_lambda}
         \Omega \cap B\of{\vec x, \delta} =
           \set{\vec y = \del{\vec y', y^n} \in B\of{\vec x, \delta} : y^n < \omega \of{\vec y'}} ;
    \end{equation}
see \figref{f:RefMap}. 
A $W^{2-1/s}_s$-domain where $\delta$ can be chosen independently of $\vec x$ is said to be a \emph{uniform} $W^{2-1/s}_s$--domain.
It is easy to verify that every bounded $W^{2-1/s}_s$-domain is a \emph{uniform} $W^{2-1/s}_s$-domain.
\end{definition}

To study the problem \eqref{eq:stokes_full} near $\vec x \in \bdy\Omega$ we need to flatten $\bdy\Omega$ locally. This is realized by a map $\Psi = (\Psi^1,\dots,\Psi^n) : \bbR^n_{-} \rightarrow \bbR^n$ with the following properties (see \figref{f:RefMap}):
\begin{enumerate}[(P1)]
\item \label{P1} $\Psi$ is a diffeomorphism of class $W^2_s$ between $\bbR^n_{-}$ and 
$\Psi(\bbR^n_{-}) \subset \bbR^n$, $\ye \in \Qe \subset \bbR^n_{-}$ denotes the reference coordinate and $\vec y \in \Psi(\bbR^n_{-})$ the physical coordinate so that $\vec y = \Psi(\ye)$ and $Q = \Psi(\Qe)$. 
\item \label{P2} $\Psi(\De(\xe, \delta/2)) = \bdy\Omega \cap B(\vec x, \delta/2)$. 
\item \label{P3} $\Psi(\bbR^n_{-} \setminus \Be(\xe, \delta)) = \calI$ (identity). 
\end{enumerate}
We construct such a map $\Psi$ in Section~\ref{s:loc_diff} but observe now the gain in regularity from $W^{2-\frac{1}{s}}_s$ to $W^2_s$. This regularity improvement is critical for our theory and is achieved by extending the function $\omega$ suitably. 

\begin{figure}[h]
\centering
\includegraphics[width=0.5\textwidth]{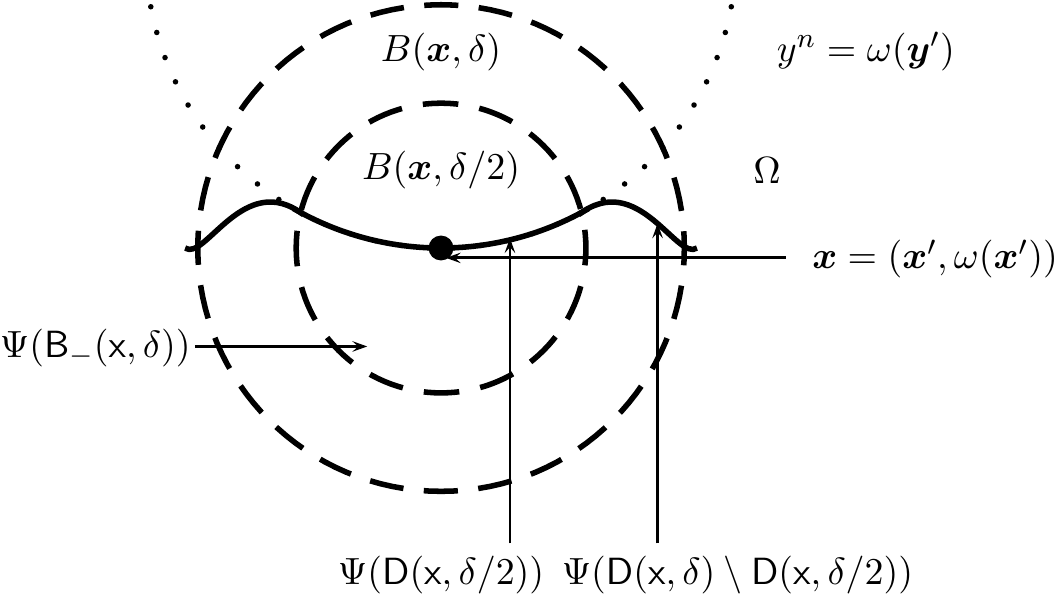} \quad
\includegraphics[width=0.23\textwidth]{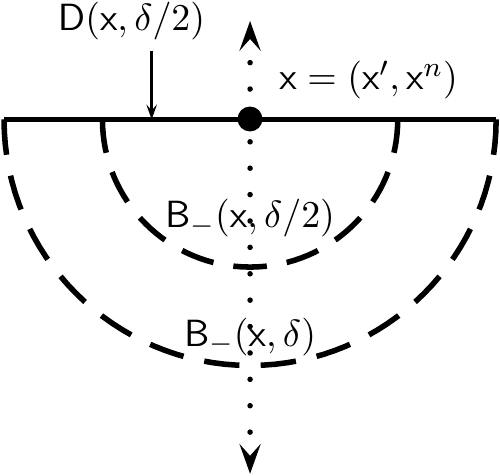}
\caption{The left panel depicts part of the boundary $\bdy\Omega$ which has a graph representation (dotted curve), i.e., if $\vec x \in \bdy\Omega$
  then $\vec x = (\vec x', \omega(\vec x'))$. The domain $\Omega$ is
  assumed to lie below $\bdy\Omega$. The inner dashed curve is $B(\vec x,
  \delta/2)$, and the outer dashed curve is $B(\vec x,\delta)$. 
  The right panel shows the reference domains $\Be_{-}(\xe,\delta/2) =
  \bbR^n_{-}\cap \Be(\xe,\delta/2)$ and $\Be_{-}(\xe,\delta) =
  \bbR^n_{-}\cap \Be(\xe,\delta)$ with discs $\De(\xe,\delta/2)$ and
  $\De(\xe,\delta)$ at the top respectively.
  If $\Psi$ is a local $W^2_s$-diffeomorphism satisfying
    (P\ref{P1})-(P\ref{P3}), we see that $\Psi(\Be_{-}(\xe,\delta))$
    is the part of $\Omega$ lying below the solid curve (left panel).}
\label{f:RefMap}
\end{figure}

If $X_p(\Omega)$ is a Banach space over $\Omega$, we denote by $\norm{\cdot}_{X_p}$ its norm. By $L^p(\Omega)$ with $p \in [1,\infty]$ we denote the space of functions that are Lebesgue integrable with exponent $p$. By $W^k_p(\Omega)$ we denote the classical Sobolev space of functions whose distributional 
derivatives up to $k$-th order are in $L^p(\Omega)$. We indicate with $\sobZ{k}{p}\Omega$ the closure of $C^\infty_c(\Omega)$ in $W^k_p(\Omega)$. The Lebesgue conjugate to $p$ will be denoted by $p'$, i.e. $\frac{1}{p} + \frac{1}{p'} = 1$. 
We denote by $\pair{\cdot,\cdot}$ the duality pairing and sometimes the $L^2$-scalar product when it is clear from the context. 
The relation $a \lesssim b$ indicates that $a \le C b$, with the constant $C$ that does not depend on $a$ or $b$. The 
value of $C$ might change at each occurrence.

Given matrices $\vec P \in \bbR^{n\times n}$, $\vec M = \del{m^{i,k}}_{i,k=1}^n \in \bbR^{n\times n}$, 
and vector $\vec w \in \bbR^n$, we define 
\begin{align}\label{eq:frakP}
\mathfrak{P_{\matP}(\matM)} = \matP \matM \matP^{-1} , 
\end{align}
and note that
\begin{align}\label{eq:gradient_matrix_vector}
    \grad\del{\vec M \vec w} = \grad \vec M \odot \we + \vec M \grad \vec w ,  
\end{align}
where the $(i,j)$ component of the $n \times n$ matrix $\grad \vec M \odot \we$ is 
\[
    \del{\grad \vec M \odot \vec w}^{i,j} = \sum_{k=1}^n \del{\partial_{x^j} m^{i,k}}\vec w^k . 
\]
%

%
%
%
%
%
%
%
%
%

\section{The Hilbert Space Case}
\label{s:hilbert_case}

In this section we prove the well-posedness of the Stokes problem in
$X_2\of{\Omega}\times\dual{X_2\of\Omega}$. Results in this direction
are known for a generalized Stokes system on $C^{1,1}$ domains
\cite{HBeiraodaVeiga_2004}. We show that $\Omega$ being Lipschitz is
sufficient for the homogeneous case $\phi = 0$.
Our proof relies on Korn's inequality, Brezzi's inf-sup condition for saddle-point problems, and Ne\v{c}as' estimate on the right inverse of the divergence operator. 
We collect these results in the sequel.

\begin{proposition}[Korn's inequality]
\label{prop:korn_bounded_domain}
\label{prop:korn_first_inequality} 
Let  $ 1 < r < \infty$ and $\calD$ be a bounded Lipschitz domain in $\bbR^n$. There exists constants $C_1$ and $C_2$ depending only on $\calD, n$ and $r$ such that for every $\vec v$ in $\sob1r\calD$
    \begin{subequations}
    \begin{equation}\label{eq:korn_inequality_first}
        \normS{\vec v}1r\calD \leq C_1\del{\normLr{\vec v}\calD + \normLr{\gradS v}\calD } \leq C_2\normS{\vec v}1r\calD.
    \end{equation}
Moreover, for every $\vec v \in \sob 1r\calD$ there exists a skew symmetric matrix  $\mat A$ in $\bbR^{n\times n}$, and $\vec b\in \bbR^n$ such that 
    \begin{equation}\label{eq:korn_inequality_bdd}
        \normS{\vec v - \of{\vec A\vec x + \vec b}}1r\calD \leq C_{\calD, n, r}\normLr{\gradS v}\calD.
    \end{equation}
    \end{subequations}
\end{proposition}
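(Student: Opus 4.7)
The upper inequality in \eqref{eq:korn_inequality_first} is immediate from the pointwise bound $|\gradS v| \le \sqrt{2}\,|\grad \vec v|$ and $\|\vec v\|_{L^r(\calD)} \le \|\vec v\|_{W^1_r(\calD)}$, so one takes $C_2 = 1+\sqrt 2$. The nontrivial direction recovers $\grad\vec v$ from $\gradS v$. My plan is to invoke the Ne\v cas negative-norm lemma, which is valid on any bounded Lipschitz $\calD$ and $1<r<\infty$: if a distribution $u$ satisfies $u\in W^{-1}_r(\calD)$ and $\grad u\in W^{-1}_r(\calD)$, then $u\in L^r(\calD)$ with $\|u\|_{L^r(\calD)} \lesssim \|u\|_{W^{-1}_r(\calD)} + \|\grad u\|_{W^{-1}_r(\calD)}$. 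Applied to $u = \partial_k v^j$, together with the elementary identity
\begin{equation*}
\partial_i \partial_k v^j \;=\; \partial_k \varepsilon_{ij}(\vec v) + \partial_i \varepsilon_{jk}(\vec v) - \partial_j \varepsilon_{ik}(\vec v),
\end{equation*}
one sees that $\|\grad(\partial_k v^j)\|_{W^{-1}_r(\calD)} \lesssim \|\gradS v\|_{L^r(\calD)}$, while trivially $\|\partial_k v^j\|_{W^{-1}_r(\calD)} \lesssim \|\vec v\|_{L^r(\calD)}$. Summing over $j,k$ gives the first inequality of \eqref{eq:korn_inequality_first}.

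For \eqref{eq:korn_inequality_bdd}, denote by $K=\set{\vec A\vec x + \vec b : \vec A^\top = -\vec A,\ \vec b\in\bbR^n}\subset W^1_r(\calD)$ the finite-dimensional kernel of $\gradS{\cdot}$, of dimension $n(n+1)/2$. The statement is equivalent to the quotient estimate
\begin{equation*}
\inf_{\vec z\in K}\ \normS{\vec v - \vec z}{1}{r}{\calD} \;\lesssim\; \normLr{\gradS v}{\calD},
\end{equation*}
and my plan is a Peetre--Tartar contradiction argument. Suppose it fails: there is a sequence $\{\vec v_n\}\subset W^1_r(\calD)$ with $\normS{\vec v_n}{1}{r}{\calD}=1$, $\dist_{W^1_r(\calD)}(\vec v_n,K)\ge 1/2$, and $\normLr{\gradS{v_n}}{\calD}\to 0$. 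Since $\calD$ is bounded Lipschitz, the embedding $W^1_r(\calD)\hookrightarrow L^r(\calD)$ is compact (Rellich), so a subsequence converges in $L^r(\calD)$ to some $\vec v$. The first Korn inequality, already proven, then upgrades this convergence to $W^1_r(\calD)$, so that $\gradS v = 0$ and $\vec v\in K$. This contradicts $\dist_{W^1_r(\calD)}(\vec v_n,K)\ge 1/2$. Choosing $\vec z$ as a near-minimizer of the quotient norm and writing $\vec z = \vec A \vec x + \vec b$ produces the claimed $\vec A$ and $\vec b$.

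The main obstacle is the Ne\v cas negative-norm lemma, which is a nontrivial result and the place where the bounded Lipschitz hypothesis on $\calD$ is actually used; the rest of the argument is soft functional analysis that is insensitive to any further regularity of $\bdy\calD$. I would simply cite this lemma from the classical literature rather than reprove it.
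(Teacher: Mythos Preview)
The paper does not actually prove this proposition; it merely cites \cite[Theorem A.1]{MLewicka_SMuller_2011} for $r=2$ and \cite[Section 2]{RGDuran_MAMuschietti_2004a} for general $1<r<\infty$. Your proposal, by contrast, supplies a genuine self-contained argument, and it is correct: the Ne\v{c}as negative-norm lemma combined with the second-derivative identity $\partial_i\partial_k v^j = \partial_k\varepsilon_{ij}+\partial_i\varepsilon_{jk}-\partial_j\varepsilon_{ik}$ is exactly the classical route to the first Korn inequality on bounded Lipschitz domains (this is, in fact, essentially the argument in the Dur\'an--Muschietti reference the paper cites), and the Peetre--Tartar compactness argument for \eqref{eq:korn_inequality_bdd} is standard and correctly executed. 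One small point worth making explicit in your contradiction step: after extracting the $L^r$-convergent subsequence, you should observe that the \emph{already-proven} first inequality applied to differences $\vec v_n - \vec v_m$ shows the sequence is Cauchy in $W^1_r(\calD)$, which is what upgrades the convergence; you state this but the mechanism deserves one more sentence.
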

\begin{proof} See \cite[Theorem A.1]{MLewicka_SMuller_2011} for $r = 2$, \cite[Section 2]{RGDuran_MAMuschietti_2004a} for $1 < r < \infty$ in bounded domains.
\end{proof}

\begin{lemma}[equivalence of norms]
\label{lem:equiv_norm}
Let  $ 1 < r < \infty$ and $\calD$ be a bounded Lipschitz domain. For every $\vec v$ in $V_r\of\calD$ the following holds
    \[
       \normS{\vec v}1r\calD \leq C_{\calD, n, r}\normLr{\gradS v}\calD.
    \]
\end{lemma}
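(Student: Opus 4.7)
The plan is to apply the second Korn inequality \eqref{eq:korn_inequality_bdd} to any representative of $[\vec v]\in V_r(\calD)$ and then absorb the spurious rigid motion into the quotient by exploiting the constraint $\vec v\cdot\vnu=0$ together with the very definition of $Z(\calD)$ in \eqref{eq:kernel_z}. The norm on the left of the claimed estimate is the quotient norm $\inf_{\vec z\in Z(\calD)}\normS{\vec v-\vec z}1r\calD$, so it suffices to exhibit one $\vec z\in Z(\calD)$ achieving the bound.

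Let $\calR$ be the finite-dimensional space of infinitesimal rigid motions $\vec z(\vec x)=\vec A\vec x+\vec b$ with $\vec A=-\vec A\transpose$ on $\bbR^n$, and fix an algebraic complement $Z^{\perp}$ so that $\calR=Z(\calD)\oplus Z^{\perp}$. Given a representative $\vec v\in\sob1r\calD$ with $\vec v\cdot\vnu=0$ on $\bdy\calD$, Proposition \ref{prop:korn_bounded_domain} supplies $\vec z_v\in\calR$ with $\normS{\vec v-\vec z_v}1r\calD\leq C\normLr{\gradS v}\calD$. Splitting $\vec z_v=\vec z^{\parallel}+\vec z^{\perp}$, the candidate $\vec z:=\vec z^{\parallel}\in Z(\calD)$ gives $\vec v-\vec z^{\parallel}=(\vec v-\vec z_v)+\vec z^{\perp}$, and it remains to control $\vec z^{\perp}$ in $\sob1r\calD$. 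Taking the normal trace on $\bdy\calD$ and using $\vec v\cdot\vnu=0$ together with $\vec z^{\parallel}\cdot\vnu=0$ yields $\vec z^{\perp}\cdot\vnu=-(\vec v-\vec z_v)\cdot\vnu$ on $\bdy\calD$; since $\vnu\in\Linf{\bdy\calD}$ for Lipschitz $\calD$, the trace embedding gives $\normLr{\vec z^{\perp}\cdot\vnu}{\bdy\calD}\leq C\normLr{\gradS v}\calD$. Finally, on $Z^{\perp}$ the map $\vec z\mapsto\vec z\cdot\vnu|_{\bdy\calD}$ is injective by construction (otherwise such a $\vec z$ would lie in $Z(\calD)\cap Z^{\perp}=\{\vec 0\}$), and finite-dimensionality upgrades this to a norm equivalence $\normS{\vec z^{\perp}}1r\calD\lesssim\normLr{\vec z^{\perp}\cdot\vnu}{\bdy\calD}$, which closes the argument via the triangle inequality.

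The main obstacle is conceptual rather than technical: one must keep track of which rigid motions represent the zero class in $V_r(\calD)$, since Korn produces an arbitrary element of $\calR$ whereas the quotient identifies only elements of $Z(\calD)\subseteq\calR$. When $\calD$ is not axisymmetric, $Z(\calD)=\{\vec 0\}$ so $Z^{\perp}=\calR$, the decomposition is vacuous, and the auxiliary trace bound on $\vec z^{\perp}$ is all that is needed; in the axisymmetric case the splitting step is essential because the rigid motion $\vec z_v$ supplied by Korn need not respect the boundary constraint, but projecting away its $Z(\calD)$-component is harmless for the quotient.
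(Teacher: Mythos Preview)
Your argument is correct. The paper itself omits the proof, stating only that it ``proceeds by contradiction'' along the lines of \cite[Theorem~A.2]{MLewicka_SMuller_2011}: one assumes a normalized sequence $\vec v_k\in V_r(\calD)$ with $\normLr{\gradS{v_k}}\calD\to 0$, extracts a convergent subsequence via Rellich--Kondrachov and the first Korn inequality \eqref{eq:korn_inequality_first}, and reaches a contradiction by identifying the limit as a nonzero element of $Z(\calD)$ representing the zero class.

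Your route is genuinely different and more direct: rather than compactness, you invoke the second Korn inequality \eqref{eq:korn_inequality_bdd} once, then control the residual rigid motion by a finite-dimensional norm equivalence driven by the injectivity of $\vec z\mapsto\vec z\cdot\vnu|_{\bdy\calD}$ on $Z^{\perp}$. The trade-offs are as follows. The contradiction argument is softer---it needs only \eqref{eq:korn_inequality_first} and compact embedding, and it adapts without change to the variant in Remark~\ref{rem:norm_equiv} where $\vec v\cdot\vnu=0$ is imposed only on a subset $\Gamma\subset\bdy\calD$ of positive measure (one redefines $Z(\calD)$ accordingly). Your argument also adapts to that setting, but one must check that rigid motions are determined by their normal trace on $\Gamma$, which is a slightly less immediate statement than on all of $\bdy\calD$. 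On the other hand, your proof is constructive, yields an in-principle expression for the constant $C_{\calD,n,r}$ in terms of the Korn constant, the trace constant, and the finite-dimensional equivalence constant on $Z^{\perp}$, and avoids any subsequence extraction.
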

\begin{proof} This proceeds by contradiction. However, the
  argument is fairly standard, is based on \cite[Theorem
    A.2]{MLewicka_SMuller_2011} for $r=2$, and is thus omitted. 
\end{proof}

\begin{remark}[boundary condition $\vec z \cdot \vec \nu = 0$] \rm
\label{rem:norm_equiv}
The difference between \lemref{lem:equiv_norm} and
Proposition~\ref{prop:korn_bounded_domain} is that the vector-fields
$\vec z$ in $Z\of{\calD}$ satisfy $\vec z \cdot \vnu = 0$ while the
ones from Korn's inequality do not have this requirement. We further
remark that \lemref{lem:equiv_norm} remains valid if the condition
$\vec v \cdot \vec \nu = 0$ is imposed only on a subset of $\bdy\calD$
with positive measure. We will use this result in
\thmref{thm:bhs_isomorphic_domains}, \lemref{lem:C_compact} and
\lemref{lem:boundary_regularity} below.
\end{remark}

Next we state Brezzi's characterization of \eqref{eq:stokes_abstract} as a saddle-point problem. We rewrite \eqref{eq:stokes_abstract} as follows: find a unique $\of{\vec u, p}$ in $V_r\of\Omega \times \LrZ\Omega$ such that
    \begin{equation}\label{s:stokes_saddle_pt}\begin{aligned}
        \eta\pair{\gradS u, \gradS v}_\Omega - \pair{p, \divg{\vec v}}_\Omega &= \calF\of{\vec v,0} &&\forall \vec v \in V_{r'}\of\Omega, \\
        \pair{\divg{\vec u}, q}_\Omega &= \calF\of{\vec 0,q} &&\forall q \in \LrpZ\Omega.
    \end{aligned}\end{equation}

\begin{lemma}[inf-sup conditions]\label{lem:brezzi} The saddle point problem \eqref{s:stokes_saddle_pt} is well-posed in $(V_r\of\Omega \times \LrZ\Omega)\times (\dual{V_{r'}\of\Omega}\times \dual{\LrpZ\Omega})$ if and only if there exist constants $\alpha, \beta > 0$ such that
    \begin{subequations} \label{eq:brezzi_conditions}
    \begin{align}
      \label{eq:brezzi_a}
        \inf_{\vec w \in \mathring V_r}\sup_{\vec v \in \mathring V_{r'}} 
            \frac{\pair{\gradS w, \gradS v}}{\norm{\vec w}_{V_r}\norm{\vec v}_{V_{r'}}} 
        = \inf_{\vec v \in \mathring V_{r'}}\sup_{\vec w \in \mathring V_{r}} 
            \frac{\pair{\gradS w, \gradS v}}{\norm{\vec w}_{V_r}\norm{\vec v}_{V_{r'}}} 
        = \alpha &> 0, \\
        \label{eq:brezzi_b}
        \inf_{q \in L^{r'}_0}\sup_{\vec w \in V_{r}} \frac{\pair{\divg{\vec w}, q}}{\norm{\vec w}_{V_r}\norm{q}_{L^{r'}}} = \beta &> 0,
    \end{align}
    \end{subequations}
where $\mathring V_{r} := \set{\vec w \in V_r\of\Omega : \pair{\divg{\vec w}, q} = 0,\, \forall q \in \LrpZ\Omega}.$
In addition, there exists $\gamma = \gamma (\alpha, \beta, \eta)$ such that the solution $(\vec u, p)$ is bounded by
   \[
       \norm{\of{\vec u,p}}_{X_r\of\Omega} \leq \gamma \norm{\calF}_{\dual{X_{r'}\of\Omega}}.
   \]
\end{lemma}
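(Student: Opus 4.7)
The plan is to apply the Banach-space version of Brezzi's saddle-point theorem, adapted to the reflexive pair $\bigl(V_r(\Omega)\times L^r_0(\Omega),\,V_{r'}(\Omega)\times L^{r'}_0(\Omega)\bigr)$. First I would introduce the continuous operators $A\colon V_r\to\dual{V_{r'}}$ and $B\colon V_r\to\dual{L^{r'}_0}$ associated with the bilinear forms $a(\vec w,\vec v):=\eta\pair{\gradS w,\gradS v}$ and $b(\vec w,q):=\pair{\divg\vec w,q}$, together with their Banach adjoints $A^*$ and $B^*$. The saddle-point system (\ref{s:stokes_saddle_pt}) is then equivalent to the block equation $A\vec u + B^* p = \calF(\cdot,0)$, $B\vec u = \calF(\vec 0,\cdot)$, and well-posedness is equivalent to asserting that this block operator is an isomorphism.

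For the implication $(\ref{eq:brezzi_conditions})\Rightarrow$ well-posedness, I would proceed in three steps. Condition (\ref{eq:brezzi_b}) is the classical characterization (via the closed range theorem) of the surjectivity of $B$ with bounded right inverse $B^{\dagger}$ of norm $\le 1/\beta$, and equivalently the boundedness below of $B^*$ by $\beta$. Using $B^{\dagger}$, I lift the divergence datum to $\vec u_0:=B^{\dagger}\calF(\vec 0,\cdot)\in V_r$ and write $\vec u=\vec u_0+\vec u_1$ with $\vec u_1\in\mathring V_r$. Testing the first equation against $\vec v\in \mathring V_{r'}$ kills the pressure term because $\pair{B^* p,\vec v}=\pair{p,B\vec v}=0$, reducing the problem on the kernel to $A\vec u_1 = \calF(\cdot,0)-A\vec u_0$ in $\dual{\mathring V_{r'}}$. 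The symmetric pair of inf-sup inequalities in (\ref{eq:brezzi_a}), combined with the Banach--Ne\v cas--Babu\v ska theorem, yield that $A|_{\mathring V_r\times\mathring V_{r'}}$ is an isomorphism with constant $\alpha$, whence $\vec u_1$ is uniquely determined and controlled by $\alpha^{-1}$. Finally, $p$ is recovered from $B^* p = \calF(\cdot,0)-A\vec u$ using the lower bound $\beta$ on $B^*$. Tracking constants gives $\gamma=\gamma(\alpha,\beta,\eta)$.

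For the converse direction, I would start from the hypothesis that the block operator is an isomorphism. Testing only with $\vec v\in\mathring V_{r'}$ and $q=0$ eliminates $p$ and produces a bijection $A|_{\mathring V_r}\to\dual{\mathring V_{r'}}$, which by standard duality is exactly the symmetric inf-sup (\ref{eq:brezzi_a}); testing only with $q\in L^{r'}_0$ and $\vec v=\vec 0$ yields the surjectivity of $B$, which is (\ref{eq:brezzi_b}).

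The main obstacle compared to the Hilbert setting $r=2$ is the absence of orthogonal decomposition $V_r=\mathring V_r\oplus\mathring V_r^{\perp}$: one cannot simply project onto the divergence-free subspace, and must instead rely on the continuous right inverse $B^{\dagger}$ produced by (\ref{eq:brezzi_b}) and on reflexivity to get the two-sided inf-sup (\ref{eq:brezzi_a}) from a single closed-range condition on $A$ restricted to the kernel. This is why (\ref{eq:brezzi_a}) is stated as a double inf-sup. All other elements (closed range theorem, Banach--Ne\v cas--Babu\v ska, continuity of $A$ and $B$) are routine in reflexive Banach spaces, so the remainder is bookkeeping of constants.
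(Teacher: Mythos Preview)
Your proposal is correct and is precisely the standard proof of the Banach-space Brezzi theorem; the paper itself does not spell out any of these details but simply cites \cite[Section II.1, Theorem 1.1]{FBrezzi_MFortin_1991a}, so you have effectively reproduced the argument behind that reference.
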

\begin{proof} See \cite[Section II.1, Theorem 1.1]{FBrezzi_MFortin_1991a}.
\end{proof}

\begin{theorem}
[well-posedness for $r=2$] 
\label{thm:hilbert_case}
Let $\Omega$ be a bounded Lipschitz domain. The Stokes problem \eqref{eq:stokes_abstract} is well-posed in $X_{2}\of\Omega\times\dual{X_{2}\of\Omega}$.
\end{theorem}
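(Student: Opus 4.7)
The plan is to recognize \eqref{eq:stokes_abstract} as a symmetric saddle-point problem in a Hilbert space and invoke Brezzi's characterization (Lemma \ref{lem:brezzi}) with $r=r'=2$. This reduces the task to verifying the two inf-sup conditions \eqref{eq:brezzi_a} and \eqref{eq:brezzi_b}; once both are established, Brezzi's theorem delivers existence, uniqueness, and the a priori bound \eqref{eq:stokes_apriori} in one stroke.

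For the coercivity condition \eqref{eq:brezzi_a}, I would exploit the symmetry of $\eta\langle\gradS u,\gradS v\rangle_\Omega$ in the Hilbert case. Testing the supremum with $\vec v = \vec w \in \mathring V_2(\Omega)$ gives
\begin{equation*}
\sup_{\vec v \in \mathring V_2}\frac{\eta \pair{\gradS w,\gradS v}_\Omega}{\norm{\vec v}_{V_2}}
   \geq \frac{\eta\,\normLt{\gradS w}{\Omega}^2}{\norm{\vec w}_{V_2}}
   \geq c_K\,\eta\,\norm{\vec w}_{V_2},
\end{equation*}
where $c_K > 0$ comes from Lemma \ref{lem:equiv_norm}. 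This is exactly the purpose of quotienting by $Z(\Omega)$ in the definition of $V_r(\Omega)$: the kernel of the symmetric gradient compatible with $\vec v\cdot\vnu = 0$ is removed, so $\normLt{\gradS w}{\Omega}$ is a norm equivalent to $\normS{\vec w}12\Omega$ on $V_2(\Omega)$. By symmetry of $\eta\pair{\gradS u,\gradS v}$ the roles of $\vec w$ and $\vec v$ are interchangeable, so both infima in \eqref{eq:brezzi_a} are bounded below by $\alpha := c_K\eta$.

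For the divergence inf-sup \eqref{eq:brezzi_b}, the plan is to invoke the classical Bogovski\u{\i}–Ne\v cas result for the right inverse of the divergence on bounded Lipschitz domains: for every $q \in L^2_0(\Omega)$ there exists $\vec w \in \sobZ{1}{2}{\Omega}^n$ satisfying $\divg \vec w = q$ in $\Omega$ together with $\normS{\vec w}{1}{2}{\Omega}\leq C\,\normLt{q}{\Omega}$. Since $\sobZ{1}{2}{\Omega}^n \hookrightarrow V_2(\Omega)$ (indeed $\vec w = 0$ on $\bdy\Omega$ forces $\vec w\cdot\vnu = 0$ in the trace sense), this $\vec w$ is admissible in the supremum, and testing with it yields $\beta := 1/C > 0$.

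With both inf-sup constants positive, Lemma \ref{lem:brezzi} provides the unique solution $(\vec u, p)\in X_2(\Omega)$ together with the continuity estimate \eqref{eq:stokes_apriori} with constant $\gamma = \gamma(\alpha,\beta,\eta)$. The only mildly subtle point in the plan is ensuring that the quotient by $Z(\Omega)$ in $V_r$ is compatible with the compatibility conditions \eqref{eq:stokes_compatibility}: these conditions make $\calF$ well-defined as an element of $\dual{X_2(\Omega)}$, so the right-hand side of \eqref{s:stokes_saddle_pt} is unambiguous. Beyond this bookkeeping, the Hilbert case presents no essential obstacle—the real difficulty, deferred to \S\ref{s:sobolev_case}, lies in the non-Hilbert regime $r \neq 2$ where symmetry is lost and the localization machinery of \S\ref{s:sobolev_piola} becomes necessary.
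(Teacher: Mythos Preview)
Your proposal is correct and follows exactly the approach the paper intends: the paper's own proof is the single sentence ``It suffices to check Brezzi's conditions \eqref{eq:brezzi_conditions},'' and your write-up simply fills in the two verifications (coercivity via Lemma~\ref{lem:equiv_norm}, divergence inf-sup via the Bogovski\u{\i}--Ne\v{c}as right inverse) that the paper alludes to in the paragraph preceding Proposition~\ref{prop:korn_bounded_domain}.
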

\begin{proof} 
It suffices to check Brezzi's conditions \eqref{eq:brezzi_conditions}.
\end{proof}

\begin{remark}[boundary regularity] \rm
The Lipschitz regularity of $\partial\Omega$ is adequate only for
$\phi=0$ and $r=2$, as alluded to in the introduction. In general, we
need to work with domains of class $W^{2-1/s}_s$.
\end{remark}

\section{Stokes Problem on Unbounded Domains ($\bbR^n$ and $\halfspace$) }
\label{s:unbounded}
The purpose of this section is to prove the existence, uniqueness and \emph{local} regularity of the Stokes problem \eqref{eq:stokes_abstract} in the whole space $\bbR^n$ and the half-space $\halfspace$ for data with \emph{compact} support. These two problems are the essential building blocks for the localization procedure in \secref{s:sobolev_case}. 
This problem has been extensively studied under different functional frameworks; we refer to \cite[Introduction]{FAlliot_CAmrouche_1999a} for an overview. 



Weighted Sobolev spaces are an extremely general framework for it provides a wealth of predictable behaviors at $\infty$ when considering different weight functions. 
A different framework is the one of Homogeneous Sobolev spaces, its main disadvantage being the lack of control on the $L^r$-norm of the function. 
Fortunately, these two frameworks are interchangeable as long as the data in question has compact support  and one is not interested in the behavior at $\infty$ of the functions being analyzed \cite[Proposition 4.8]{FAlliot_CAmrouche_1999a}. 

With this equivalence in hand, and the fact that our work was originally inspired by that of Galdi-Simader-Sohr \cite{GPGaldi_CGSimader_HSohr_1994}, we choose to work with the Galdi-Simader's characterization for homogeneous Sobolev spaces \cite{GPGaldi_CGSimader_1990}. The rest of this section is split into three parts. In \secref{s:homogeneous_sobolev_spaces} we recall this essential characterization and define the equivalent $X_r\of{\calD}$ spaces for unbounded domains $\calD$. In \secref{s:whole_space} we prove the well-posedness of the Stokes problem in its symmetric gradient form in $\bbR^n$. Finally, in \secref{s:half_space} we extend the result to the half-space $\halfspace$.


\subsection{Homogeneous Sobolev Spaces}
\label{s:homogeneous_sobolev_spaces}
The solution space $X_r(\Omega)$ is too small to prove an existence and uniqueness result for unbounded domains \cite[Section 2]{GPGaldi_CGSimader_HSohr_1994}. In these cases we are led to consider the homogeneous Sobolev spaces 
	\begin{equation}\label{eq:gs_space}\begin{aligned}
		\sobH 1 r {\bbR^{n}} = \sobHZ 1 r {\bbR^n} 
		    &:= \clos{[\CCinf{\bbR^{n}}]^n}^{\normSZ{\cdot}1r{\bbR^{n}}}, \\
		\sobH 1 r {\halfspace} 
		    &:= \clos{[\CCinf{\clos{\bbR^{n}_{-}}}]^n}^{\normSZ{\cdot}1r {\bbR^n_{-}}}, \\ 
		\sobHZ 1 r {\bbR^{n}_{-}} 
		    &:= \clos{[\CCinf{\clos{\bbR^n_{-}}}]^{n-1}\times \CCinf{\bbR^n_{-}}}^{\normSZ{\cdot}1r {\bbR^n_{-}}},
	\end{aligned}\end{equation}
where $\CCinf{\calD}$ are $C^\infty$ functions with compact support in $\calD$, the half-space $\halfspace$ is given by 
$\vec x = (\vec x', x^n) \in \bbR^n$ with $x^n < 0$, and $\clos\halfspace = \halfspace \cup \bdy\halfspace$ with 
$\vec x$ in $\bdy\halfspace$ if and only if $x^n = 0$. The statement 
$\vec v = (\vec v', v^n) \in [\CCinf{\clos{\halfspace}}]^{n-1} \times \CCinf{\halfspace}$ implies 
$\vec{v}\restriction_{\bdy\halfspace} = (\vec v'\restriction_{\bdy\halfspace}, 0)$ for $\vec v' \in [\CCinf{\clos{\halfspace}}]^{n-1}$. 
For a detailed presentation of these spaces, their duals and trace spaces see \cite[Chapter II]{GPGaldi_2011a}, in particular \cite[Theorem II.10.2]{GPGaldi_2011a} for the trace space results.

Next we recall a result by Galdi-Simader on the characterization of $\sobH 1r\calD$ and $\sobHZ 1r\calD$ with $\calD$ equal to $\bbR^n$ or $\halfspace$  \cite[Lemma 2.2]{HKozono_HSohr_1992},\cite[Section 1]{HBeiraodaVeiga_2005}, \cite{GPGaldi_CGSimader_1990}, and \cite{GPGaldi_CGSimader_HSohr_1994}.

\begin{proposition}[Galdi-Simader]\label{prop:galdi_simader}
Let $1 < r < \infty$ and $\sobH 1r\calD$ and $\sobHZ 1r\calD$ be the spaces defined in \eqref{eq:gs_space}. The following characterization holds, 
	\begin{equation}\label{eq:galdi_simader_a}\begin{aligned}
		\sobH 1 r {\bbR^n} 
		    &= \set{\eclass{\vec v}_1 \in [\Lrloc{\bbR^n}]^n : 
		            \grad \vec v \in \sbr{\Lr{\bbR^n}}^{n\times n}}, \\
		\sobHZ 1 r \halfspace
		    &= \set{\vec v = (\eclass{\vec v'}_1,v^n) \in [\Lrloc{\clos\halfspace}]^n : 
		            \grad \vec v \in \Lr\halfspace^{n\times n},\, v^n\restriction_{\bdy\halfspace} = 0},
	\end{aligned}\end{equation}
where $\eclass{\vec v}_1$ is the equivalence class of functions in $\sbr{\Lrloc{\calD}}^n$ which differ by a constant vector. Furthermore, if $ 1 < r < n$ then additionally 
	\begin{equation}\label{eq:galdi_simader_b} \begin{aligned}
		\sobH 1r\calD
		    &= \set{\vec v \in [L^{r^*}_\loc\of{\calD}]^n:  
		            \grad \vec v \in [\Lr\calD]^{n\times n}}, \quad \calD = \bbR^n \mbox{ or } \halfspace \\
		\sobHZ 1r\halfspace 
		    &= \set{\vec v \in 
		            \of{[L^{r^*}_\loc\of\halfspace]^{n-1} \times L^{r^*}\of\halfspace} 
		            \cap \sobH 1r\halfspace: 
		             v^n\restriction_{\bdy\halfspace} = 0},
	\end{aligned}\end{equation}
where $r^*$ is the Sobolev conjugate of $r$ and is given by $1/{r^*} = 1/r - 1/n$.
Moreover, using Gagliardo-Nirenberg inequality we have for $1 < r < n$
\begin{equation}\label{eq:GN_inequality}
    \norm{\vec v}_{L^{r^*}\del{\calD}} \lesssim \norm{\grad \vec v}_{L^r\del{\calD}} . 
\end{equation}
\end{proposition}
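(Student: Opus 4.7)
The plan is to verify each characterization by a two-sided inclusion argument: show that every element of the abstract closure automatically lies in the concrete function space on the right, and conversely approximate every function on the right by smooth compactly supported fields in the seminorm $\normSZ{\cdot}{1}{r}{\calD}$. Since the proposition collects and adapts characterizations due to Galdi--Simader and others, I would assemble the proof by combining several well-known building blocks rather than reinventing them.

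For the forward direction in \eqref{eq:galdi_simader_a}, I would begin with a Cauchy sequence $\{\vec v_k\} \subset \CCinf\calD$ in the seminorm $\norm{\grad \cdot}_{\Lr\calD}$. On any ball $B \subset \calD$ the Poincar\'e--Wirtinger inequality
\[
\norm{\vec v_k - (\vec v_k)_B}_{\Lr{B}} \lesssim \norm{\grad \vec v_k}_{\Lr{B}}
\]
shows that after subtracting ball averages the sequence is Cauchy in $\Lrloc\calD$, which defines the equivalence class in $[\Lrloc\calD]^n / \bbR^n$; meanwhile $\grad \vec v \in \Lr\calD$ is immediate from completeness of $L^r$. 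For \eqref{eq:galdi_simader_b} with $1<r<n$, the Gagliardo--Nirenberg--Sobolev inequality \eqref{eq:GN_inequality}, applied uniformly to the approximants, upgrades convergence to $L^{r^*}_\loc$ and selects a canonical decaying representative, collapsing the equivalence class to a specific function in $[L^{r^*}_\loc(\calD)]^n$.

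For the reverse inclusion, I would approximate a field $\vec v$ with $\grad \vec v \in \Lr\calD$ by $\chi_R(\vec v - c_R) * \rho_\epsilon$, where $\chi_R$ is a radial smooth cutoff with $\chi_R = 1$ on $B_R$, $\operatorname{supp} \chi_R \subset B_{2R}$, and $|\grad \chi_R| \lesssim 1/R$, and where $c_R$ is a suitable mean value on the annulus $B_{2R}\setminus B_R$. Expanding the gradient and invoking Poincar\'e--Wirtinger on that annulus yields
\[
\norm{(\grad \chi_R)(\vec v - c_R)}_{\Lr{B_{2R}\setminus B_R}} \lesssim \norm{\grad \vec v}_{\Lr{B_{2R}\setminus B_R}} \to 0
\]
as $R \to \infty$, so the cutoff error vanishes in the seminorm; standard mollification closes the construction. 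For the half-space, I would use cross-sectional trace theorems on $\{x^n = -s\}$ to make sense of $v^n\restriction_{\bdy\halfspace}$ and exploit the asymmetric product structure in \eqref{eq:gs_space}, where tangential components are approximated by $\CCinf{\clos\halfspace}$ fields while the normal component sits in $\CCinf\halfspace$ and thus vanishes on $\bdy\halfspace$.

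The main obstacle will be the reverse inclusion in the regime $r \geq n$: without Sobolev embedding into $L^{r^*}$, one cannot pin down a canonical representative and must track cutoff errors modulo constants across dyadic annuli, choosing the $c_R$ compatibly as $R \to \infty$. The Gagliardo--Nirenberg--Sobolev inequality \eqref{eq:GN_inequality} itself is classical; I would prove it on $\CCinf{}$ fields by the standard slicing argument and extend by density, transferring to the half-space via even reflection of tangential components together with odd reflection of the normal component.
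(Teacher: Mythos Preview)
The paper does not prove this proposition at all: it is stated as a recalled result and attributed to the references \cite[Lemma 2.2]{HKozono_HSohr_1992}, \cite[Section 1]{HBeiraodaVeiga_2005}, \cite{GPGaldi_CGSimader_1990}, and \cite{GPGaldi_CGSimader_HSohr_1994}. Your sketch is therefore not competing against any argument in the paper but rather reconstructing what those references contain.

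That said, your outline is the standard route and matches what one finds in Galdi's monograph and the Galdi--Simader papers: Poincar\'e--Wirtinger on balls for the forward inclusion modulo constants, cutoff plus annular Poincar\'e for the reverse inclusion, and Gagliardo--Nirenberg--Sobolev to collapse the equivalence class when $1<r<n$. The one point you flag as delicate---the reverse inclusion for $r\ge n$, where one must keep track of the constants $c_R$ consistently---is genuinely the subtle step, and in the cited literature it is handled exactly by working modulo additive constants rather than trying to select a canonical representative. Your treatment of the half-space via the asymmetric product structure and reflections is also in line with the standard arguments. So while there is nothing to compare against in the paper itself, your proposal is a faithful reconstruction of the cited proofs.
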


We conclude by introducing the functional space $X_r\of\calD$ when $\calD$ is $\bbR^n$ or $\halfspace$. The distinction between this and  \eqref{eq:banach_space_X} is that now we use the homogeneous Sobolev spaces defined above, and the pressure space is simply $\Lr\calD$, i.e. 
    \begin{subequations}\label{eq:banach_space_X_hom_all}
    \begin{equation}\label{eq:banach_space_X_hom}
            X_r\of{\calD} := V_r\of\calD \times \Lr{\calD}\quad   1 < r < \infty,
    \end{equation} 
with 
$V_{r}\of{\bbR^n} = \sobH{1}{r}{\bbR^n}$, and $V_{r}\of\halfspace= \sobHZ{1}{r}\halfspace$. 

It follows from the product definition of $X_r\of\calD$ that it is a complete space under the 
norm  
    \begin{equation}\label{eq:banach_space_X_snorm}          
        \abs{(\vec v, p)}_{X_r\of{\calD}} := \normSZ{\vec v}1r\calD + \normLr{p}\calD.
    \end{equation}
The space for the prescribed data, is $\dual{X_{r'}\of\calD}$, the topological dual of $X_{r'}\of\calD$,  where $1/r + 1/r' = 1$. Moreover, $\dual{X_{r'}\of\calD}$ is complete under the operator norm
    \begin{equation}\label{eq:banach_space_dual_X_snorm}
        \norm{\calF}_{\dual{X_{r'}\of\calD}}
            = \sup_{\abs{\of{\vec v, q}}_{X_{r'}\of{\calD}} = 1} 
                \abs{\calF\of{\vec v, q}}.
    \end{equation}
    \end{subequations}

\subsection{Stokes Problem in $\bbR^n$}
\label{s:whole_space}
In this section we investigate the well-posedness of the Stokes problem \eqref{eq:stokes_abstract} between the spaces $ X_{r}\of{\bbR^n}$ and $\dual{ X_{r'}\of{\bbR^n}}$. We begin by recalling the well posedness of the usual Stokes problem without symmetric gradient \cite[Section IV.2]{GPGaldi_2011a}. 

\begin{lemma}[Well posedness in $\bbR^n$]
\label{eq:rn_wellposed_sohr} 
Let $1 < r < \infty$, $n \geq 2$. For each $\vec f \in \sobH{-1}{r}{\bbR^n} = \dual{\sobH{1}{r'}{\bbR^n}}$ and $g \in \Lr{\bbR^n} = \dual{\Lrp{\bbR^n}}$, there exists a unique pair $\del{\vec u, p} \in \sobH{1}{r}{\bbR^n} \times \Lr{\bbR^n}$ satisfying
	\[
		-\lapl\vec u + \grad p = \vec f,\quad \divg{\vec u} = g \quad \mbox{ in }\bbR^n
	\]
in the sense of distributions, which depends continuously on the data, i.e.
	\[
		\normSZ{\vec u}1r{\bbR^n} + \normLr{p}{\bbR^n} \leq C_{n,r}\del{\normSH{\vec f}{-1}r{\bbR^n} + \normLr{g}{\bbR^n}}.
	\]
Additionally, if $1 < t < \infty$, $\vec f \in \sobH{-1}{t}{\bbR^{n}}$ and $g \in L^{t}\of{\bbR^n}$, then $\vec u \in \sobH1{r}{\bbR^n} \cap \sobH1{t}{\bbR^n}$ and $p \in \Lr{\bbR^n} \cap  \times L^{t}\of{\bbR^n}$.
\end{lemma}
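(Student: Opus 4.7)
The plan is to build an explicit solution using the Galdi--Simader--Sohr framework, reducing the system to Calder\'on--Zygmund and Riesz-transform arguments for the Laplacian, and then obtaining uniqueness by a Liouville-type argument.

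First, I would absorb the divergence constraint by a lifting. Let $\psi$ be the Newtonian potential of $g$, so that $\lapl\psi = g$ in $\bbR^n$ and $\normLr{\nabla^2\psi}{\bbR^n} \lesssim \normLr{g}{\bbR^n}$ by Calder\'on--Zygmund. Writing $\vec u = \vec v + \grad\psi$, the problem reduces to the divergence-free case
\[
-\lapl \vec v + \grad p = \tilde{\vec f}, \qquad \divg \vec v = 0 \quad \text{in } \bbR^n,
\]
with $\tilde{\vec f} := \vec f - \grad g \in \sobH{-1}{r}{\bbR^n}$, using that $\grad\colon \Lr{\bbR^n}\to \sobH{-1}{r}{\bbR^n}$ is continuous.

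Second, I would use that every element of $\sobH{-1}{r}{\bbR^n} = \dual{\sobH{1}{r'}{\bbR^n}}$ admits a representation $\tilde{\vec f} = \divg \mat H$ with $\mat H \in [\Lr{\bbR^n}]^{n\times n}$ and $\normLr{\mat H}{\bbR^n} \simeq \normSH{\tilde{\vec f}}{-1}{r}{\bbR^n}$. Taking the divergence of the momentum equation gives $\lapl p = \divg\divg \mat H$, which I invert explicitly via the Riesz transforms, $p = R_i R_j H^{ij}$. The classical $L^r$-boundedness of the double Riesz transform ($1<r<\infty$) yields $\normLr{p}{\bbR^n} \lesssim \normLr{\mat H}{\bbR^n}$. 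The residual $\vec F := \tilde{\vec f} - \grad p$ is then divergence-free, and $\vec v$ is recovered from $-\lapl \vec v = \vec F$ by componentwise convolution with the Newtonian kernel, giving $\normLr{\grad \vec v}{\bbR^n} \lesssim \normSH{\vec F}{-1}{r}{\bbR^n}$ and $\divg \vec v = 0$. Combining these bounds produces the claimed a priori estimate.

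For uniqueness, the difference $(\vec w, q)$ of two solutions satisfies $\lapl q = 0$ in $\bbR^n$ with $q \in \Lr{\bbR^n}$, which forces $q = 0$ by the Liouville property for harmonic $L^r$ functions; then $\lapl \vec w = \vec 0$ with $\grad \vec w \in \Lr{\bbR^n}$ makes $\vec w$ a constant vector, hence trivial in $\sobH{1}{r}{\bbR^n}$, which is defined modulo constants. Finally, the simultaneous regularity claim follows because the construction uses only two building blocks---the Newtonian potential and the Riesz transforms---each bounded on $L^q$ for every $1<q<\infty$; applying the construction in each of the exponents $r,t$ yields solutions that must coincide by uniqueness, so the single pair $(\vec u, p)$ lies in $\sobH{1}{r}{\bbR^n} \cap \sobH{1}{t}{\bbR^n}$ and $\Lr{\bbR^n}\cap L^t(\bbR^n)$ with the separate bounds. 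The step I expect to be delicate is the distributional identification of $p$ from the Riesz-transform formula and the verification that the pair $(\vec v, p)$ satisfies the momentum equation in the sense of distributions, since $\sobH{1}{r}{\bbR^n}$ is a space of equivalence classes and one must argue carefully within these classes.
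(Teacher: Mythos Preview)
The paper does not prove this lemma at all: it simply cites \cite[Section IV.2]{GPGaldi_2011a} and moves on, treating the result as classical. Your proposal therefore does more than the paper does, reconstructing the standard proof via Newtonian-potential lifting, Riesz-transform recovery of the pressure, and Liouville-type uniqueness. That argument is essentially the one found in Galdi's book and is correct in outline.

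One small slip: with $\vec u = \vec v + \grad\psi$ and $\lapl\psi = g$, you get $-\lapl\vec u = -\lapl\vec v - \grad g$, so the reduced right-hand side is $\tilde{\vec f} = \vec f + \grad g$, not $\vec f - \grad g$. This does not affect the structure. You also assert $\divg\vec v = 0$ without comment; it follows because $-\lapl(\divg\vec v) = \divg\vec F = 0$ and $\divg\vec v \in \Lr{\bbR^n}$ is then harmonic, hence zero. Otherwise the delicate point you flag---working modulo constants in $\sobH{1}{r}{\bbR^n}$---is exactly the one Galdi handles, and your sketch is adequate at this level of detail.
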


This result, combined with the identity $\divg{\grad \vec u\transpose} = \grad\divg{\vec u}$, yields the well-posedness of \eqref{eq:stokes_abstract}-\eqref{eq:stokes_apriori} in $\bbR^n$.

\begin{theorem}[well-posedness of $\calS_{\bbR^n}$]
\label{thm:rn_wellposed}
Let $1 < r < \infty$, $n \geq 2$. The Stokes problem $\calS_{\bbR^n}(\vec u,p) = \calF$ is well-posed in the space pair $\del{X_r\of{\bbR^n} , \dual{X_{r'}\of{\bbR^n}}}$, namely \eqref{eq:stokes_abstract}-\eqref{eq:stokes_apriori} are satisfied. Additionally, if $1 < t < \infty$ and $\calF \in \dual{X_{t'}\of{\bbR^n}}$, then $\of{\vec u, p} \in X_{t}\of{\bbR^n}$.
\end{theorem}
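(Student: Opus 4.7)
\medskip

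\noindent\textbf{Proof plan for Theorem \ref{thm:rn_wellposed}.}
The strategy is to reduce the symmetric-gradient Stokes system to the standard Laplacian-based Stokes system already handled by Lemma \ref{eq:rn_wellposed_sohr}, then to transfer the conclusions back. First I would identify, for a given $\calF \in \dual{X_{r'}\of{\bbR^n}}$, its two components: a source $\vec f \in \sobH{-1}{r}{\bbR^n} = \dual{\sobH{1}{r'}{\bbR^n}}$ (the action of $\calF$ on pairs $(\vec v,0)$) and a datum $g \in \Lr{\bbR^n}$ (the action on pairs $(\vec 0, q)$, noting the pressure space here is $\Lr{\bbR^n}$, not its mean-zero subspace, since $\bbR^n$ is unbounded). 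The continuity of $\calF$ gives $\|\vec f\|_{\sobH{-1}{r}{\bbR^n}} + \|g\|_{\Lr{\bbR^n}} \lesssim \|\calF\|_{\dual{X_{r'}\of{\bbR^n}}}$.

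Next I exploit the identity $\divg(\grad \vec u\transpose) = \grad\divg \vec u$ announced in the lemma. Writing out $\divg \vec\sigma(\vec u,p) = \eta\divg\gradS u - \grad p = \eta\lapl \vec u + \eta\grad\divg \vec u - \grad p$ and imposing the incompressibility constraint $\divg \vec u = g$ at the formal level, the momentum equation becomes
\begin{equation*}
  -\eta\lapl \vec u + \grad\tilde p = \vec f, \qquad \divg \vec u = g,
  \qquad \tilde p := p - \eta g.
\end{equation*}
This is exactly the classical Stokes system in $\bbR^n$ to which Lemma \ref{eq:rn_wellposed_sohr} applies, yielding a unique $(\vec u, \tilde p) \in \sobH{1}{r}{\bbR^n}\times \Lr{\bbR^n}$ with $\|\vec u\|_{\sobH{1}{r}{\bbR^n}} + \|\tilde p\|_{\Lr{\bbR^n}} \lesssim \|\vec f\|_{\sobH{-1}{r}{\bbR^n}} + \|g\|_{\Lr{\bbR^n}}$. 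Defining $p := \tilde p + \eta g \in \Lr{\bbR^n}$ and combining these bounds gives the pair $(\vec u,p) \in X_r\of{\bbR^n}$ together with the a priori estimate \eqref{eq:stokes_apriori}.

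Then I would verify the variational formulation: integrating by parts, $\int_{\bbR^n}\eta\gradS u:\gradS v = -\int_{\bbR^n}\eta(\lapl\vec u + \grad\divg\vec u)\cdot\vec v$ for test fields $\vec v \in \smoothZ{\bbR^n}^n$, so substituting the constructed $(\vec u, p)$ into $\calS_{\bbR^n}(\vec u,p)(\vec v,q)$ and using the strong equations recovers $\calF(\vec v,q)$; a density argument extends this identity to all $(\vec v,q) \in X_{r'}\of{\bbR^n}$. Uniqueness in $X_r\of{\bbR^n}$ follows by running the same reduction on a putative difference of solutions and invoking the uniqueness clause of Lemma \ref{eq:rn_wellposed_sohr}. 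For the additional regularity claim, the compatibility assumption $\calF \in \dual{X_{t'}\of{\bbR^n}}$ means $\vec f \in \sobH{-1}{t}{\bbR^n}$ and $g \in L^t\of{\bbR^n}$, so the corresponding assertion of Lemma \ref{eq:rn_wellposed_sohr} gives $\vec u \in \sobH{1}{r}{\bbR^n}\cap\sobH{1}{t}{\bbR^n}$ and $\tilde p \in \Lr{\bbR^n}\cap L^t\of{\bbR^n}$, whence $p = \tilde p + \eta g$ inherits the same integrability and $(\vec u,p) \in X_t\of{\bbR^n}$ as claimed.

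The only mild subtlety I foresee is the bookkeeping of the decomposition $\calF \mapsto (\vec f, g)$ and the justification that $p := \tilde p + \eta g$ actually realizes $\calS_{\bbR^n}(\vec u,p) = \calF$ on the full test space rather than just on divergence-free test fields; this is purely a density/integration-by-parts check and poses no structural difficulty. Everything else is a direct transfer from the known well-posedness of the Laplacian form.
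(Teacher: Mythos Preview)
Your proposal is correct and follows exactly the approach the paper takes: reduce the symmetric-gradient Stokes system to the classical one via the identity $\divg(\grad\vec u\transpose)=\grad\divg\vec u$, absorb the resulting $\eta\grad g$ term into a shifted pressure $\tilde p = p-\eta g$, and then invoke Lemma~\ref{eq:rn_wellposed_sohr} for existence, uniqueness, the a~priori bound, and the additional $X_t$-regularity. The paper states this in one sentence without spelling out the pressure shift or the verification of the variational identity, so your write-up is simply a faithful expansion of what the authors leave implicit.
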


\subsection{Stokes Problem in $\bbR^n_{-}$}
\label{s:half_space}
In this section we show the well-posedness of the Stokes problem \eqref{eq:stokes_abstract} in the space pair $\del{X_{r}\of{\halfspace} ,  \dual{X_{r'}\of{\halfspace}}}$. Although the reflection technique employed is well-known, the construction sets the stage for the localization section. A very general result in this direction is the work by Beir\~ao da Veiga-Crispo-Grisanti \cite{HBeiraodaVeiga_FCrispo_CRGrisanti_2011}. 

\begin{theorem}
[well-posedness of $\calS_{\halfspace}$]
\label{thm:hs_wellposed}
Let $1 < r < \infty$, $n \geq 2$. The Stokes problem \eqref{eq:stokes_abstract} is well-posed from $X_r\of{\halfspace}$ to $\dual{X_{r'}\of{\halfspace}}$. Additionally, if $1 < t < \infty$ and $\calF \in \dual{X_{t'}\of{\halfspace}}$, then $\of{\vec u, p} \in X_{t}\of{\halfspace}$.
\end{theorem}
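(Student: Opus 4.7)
The plan is to reduce the half-space problem to the whole-space result, \thmref{thm:rn_wellposed}, through a symmetry-based reflection that turns the boundary condition $u^n|_{\{x^n=0\}}=0$ into an odd-reflection invariance. For $\vec x = (\vec x',x^n) \in \bbR^n$ let $\vec x^* := (\vec x',-x^n)$, and define the reflection $R$ on pairs $(\vec v,q) \in X_{r}(\bbR^n)$ by
\[
R\vec v(\vec x) := \bigl(\vec v'(\vec x^*),\,-v^n(\vec x^*)\bigr), \qquad R q(\vec x) := q(\vec x^*),
\]
i.e.\ even on tangential components and on the pressure, odd on the normal component. Using the Galdi--Simader characterization in \propref{prop:galdi_simader}, $R$ is an involutive linear isometry of $X_r(\bbR^n)$ and of $X_{r'}(\bbR^n)$. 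A direct change of variables based on the orthogonal-covariance identity $\gradS{R\vec v}(\vec x) = R\,\gradS v(\vec x^*)\,R$ (with $R$ also denoting the diagonal matrix $\env{diag}(1,\ldots,1,-1)$) yields the bilinear invariance
\[
\calS_{\bbR^n}\bigl(R(\vec u,p)\bigr)\bigl(R(\vec v,q)\bigr) = \calS_{\bbR^n}(\vec u,p)(\vec v,q).
\]
A pair $(\vec u,p)$ is called \emph{symmetric} if $R(\vec u,p)=(\vec u,p)$; symmetric pairs automatically satisfy $u^n|_{\{x^n=0\}}=0$ by odd trace. Conversely, any $(\vec v,q) \in X_{r'}(\halfspace)$ admits a symmetric extension $E(\vec v,q) \in X_{r'}(\bbR^n)$ (odd extension of $v^n$, even of $\vec v'$ and $q$) with bounded norm; the odd extension of $v^n$ belongs to $\sobH{1}{r'}(\bbR^n)$ precisely because $v^n|_{\bdy\halfspace}=0$ is built into the definition of $\sobHZ{1}{r'}(\halfspace)$.

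Given $\calF \in \dual{X_{r'}(\halfspace)}$, I extend it to $\tilde\calF \in \dual{X_{r'}(\bbR^n)}$ by
\[
\tilde\calF(\vec w,q) := 2\,\calF\bigl(P(\vec w,q)|_{\halfspace}\bigr), \qquad P := \tfrac{1}{2}(I+R),
\]
where $P$ is the symmetric projection. The restriction $P(\vec w,q)|_\halfspace$ lies in $X_{r'}(\halfspace)$ because the normal component of $P\vec w$ vanishes on $\{x^n=0\}$ by odd symmetry, so $\tilde\calF$ is well-defined, bounded, and by construction $R$-invariant. \thmref{thm:rn_wellposed} then produces a unique $(\tilde{\vec u},\tilde p) \in X_r(\bbR^n)$ with $\calS_{\bbR^n}(\tilde{\vec u},\tilde p) = \tilde\calF$. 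The invariance identity, combined with $\tilde\calF \circ R = \tilde\calF$, shows that $R(\tilde{\vec u},\tilde p)$ satisfies the same equation, so uniqueness in $\bbR^n$ forces $R(\tilde{\vec u},\tilde p) = (\tilde{\vec u},\tilde p)$: the whole-space solution is symmetric.

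Set $(\vec u,p) := (\tilde{\vec u},\tilde p)|_\halfspace$. Symmetry gives $u^n|_{\{x^n=0\}}=0$, hence $(\vec u,p) \in X_r(\halfspace)$ with $\abs{(\vec u,p)}_{X_r(\halfspace)} \lesssim \norm{\calF}_{\dual{X_{r'}(\halfspace)}}$. For any test $(\vec v,q) \in X_{r'}(\halfspace)$, the symmetric extension $E(\vec v,q)$ is already a fixed point of $P$, so
\[
\calS_\halfspace(\vec u,p)(\vec v,q) = \tfrac12\,\calS_{\bbR^n}(\tilde{\vec u},\tilde p)\bigl(E(\vec v,q)\bigr) = \tfrac12\,\tilde\calF\bigl(E(\vec v,q)\bigr) = \calF(\vec v,q),
\]
where the factor $\tfrac12$ arises because each integrand in $\calS_{\bbR^n}$ is even in $x^n$ when both arguments carry matching symmetries. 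Uniqueness on $\halfspace$ follows analogously: any solution of the homogeneous problem extends via $E$ to a symmetric homogeneous solution in $\bbR^n$, which must vanish. For the second claim of the theorem, if $\calF \in \dual{X_{t'}(\halfspace)}$ for some $1<t<\infty$, the same construction yields $\tilde\calF \in \dual{X_{t'}(\bbR^n)}$, and the higher-integrability part of \thmref{thm:rn_wellposed} gives $(\tilde{\vec u},\tilde p) \in X_t(\bbR^n)$ and thus $(\vec u,p) \in X_t(\halfspace)$.

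The main technical obstacle is ensuring that $R$ and $E$ interact properly with the \emph{homogeneous} Sobolev spaces of \propref{prop:galdi_simader}; in particular, that the odd extension produces an $L^{r^*}_\loc$ function (when $r<n$) and a full $L^{r'}$ gradient, which reduces to density of the smooth compactly supported test sets in \eqref{eq:gs_space} and the trivial reflection identity for such test functions. Once $R$, $E$, and $P$ are in place, the remainder is bookkeeping: the covariance identity $\gradS{R\vec v}(\vec x) = R\,\gradS v(\vec x^*)\,R$ is the one computation requiring care, but it follows from $R$ being an orthogonal matrix and the symmetric gradient being a tensor under such transformations.
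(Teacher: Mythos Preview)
Your argument is correct and follows essentially the same reflection strategy as the paper: extend the half-space data to $\bbR^n$ with the appropriate even/odd parities, solve there via \thmref{thm:rn_wellposed}, and restrict back. The only organizational differences are that the paper cites an external reference (Farwig) for uniqueness on $\halfspace$ while you obtain it self-contained by extending a homogeneous solution and invoking uniqueness on $\bbR^n$, and that the paper explicitly symmetrizes the whole-space solution by averaging, whereas you deduce symmetry from the $R$-invariance of $\tilde\calF$ together with uniqueness on $\bbR^n$; both routes are equivalent and your bookkeeping with the quotient-by-constants in the homogeneous spaces goes through since an odd function that is constant must vanish.
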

\begin{proof}
In view of the uniqueness results in \cite[Theorem 3.1]{RFarwig_1994}, it suffices  to construct a solution to the Stokes problem in the half-space which depends continuously on the data.

Define for each function $\hat\varphi:\bbR^n \to \bbR$ its upper and lower parts as 
    \[
        \varphi_+\of{\vec x} := \hat\varphi\of{\vec x', -x^n}, \quad \varphi_{-}\of{\vec x} := \hat\varphi\of{\vec x} \quad \mbox{ for all } \vec x \in \halfspace.
    \]
Take $\of{\vecH v, \hat q} \in X_{r'}\of{\bbR^n}$ and define their pullbacks into $\halfspace$ as follows: 
    \[
        \vec v := \frac{1}{2}\del{\vecH{ v}'_{-} + \vecH{ v}'_+, \hat v^n_{-} - \hat v^n_+ }\transpose, \quad  q := \frac{1}{2}\del{\hat q_{-} + \hat q_{+}}.
    \]
It is simple to show that $\of{\vec v, q} \in X_{r'}\of\halfspace$ and 
    \[
        \abs{\of{\vec v, q}}_{X_{r'}\of\halfspace} \leq \abs{\of{\vecH{ v}, \hat q}}_{X_{r'}\of{\bbR^n}}.
    \]

Let $\calF \in \dual{X_{r'}\of\halfspace}$ be fixed but arbitrary and define $\hat\calF\of{\vecH{ v}, \hat q} := \calF\of{\vec v, q}$. It follows immediately from our current results that $\hat\calF$ is a linear functional on $X_{r'}\of{\bbR^n}$ and $\| \hat\calF \|_{\dual{X_{r'}\of{\bbR^n}}} \leq \|\calF\|_{\dual{X_{r'}\of{\halfspace}}}$. Therefore, \thmref{thm:rn_wellposed} for $\bbR^n$ asserts the existence and uniqueness of a solution $\of{\hat{\vec w}, \hat\pi}$ in $X_{r}\of{\bbR^n}$ to \eqref{eq:stokes_abstract} with the forcing function $\hat\calF$, i.e. 
    \begin{equation}\label{eq:hs_on_rn}
       \calS_{\bbR^n}\of{\hat{\vec w}, \hat\pi}\of{\vecH{ v}, \hat q} = \hat\calF\of{\vecH{ v}, \hat q}  \quad \forall (\vecH{ v}, \hat q) \in X_{r'}\of{\bbR^n},
    \end{equation}
and $\norm{\of{\hat{\vec w}, \hat\pi}}_{X_{r}\of{\bbR^n}} \leq C_{n,r}  \norm{\calF}_{\dual{X_{r'}\of{\halfspace}}}$.

Since the test functions $\del{\vecH{v},\hat{q}}$ are arbitrary, we take $(\vec v, q) \in X_{r'}\of{\halfspace}$ and test \eqref{eq:hs_on_rn} with $(\vecH{ v}, \hat q)$ defined as even reflections for $q$ and $\vec v'$, and an odd reflection for $v^n$, i.e.
    \[
        \hat q_+ = \hat q_{-} = q,
        \quad \vecH{ v}'_+ = \vecH{ v}'_{-} = \vec v',
        \quad \hat v^n_{+} = -\hat v^n_{-} = -v^n. 
    \]
We can immediately verify that $\hat\calF(\vecH{ v}, \hat q) = \calF\of{\vec v, q}$ 
still holds, and after some technical computations, 
	\[
		\calS_{\bbR^n}\of{\hat{\vec w}, \hat\pi}\of{\vecH{ v}, \hat q}
		 	= 2\calS_{\halfspace}\of{\vec u, p}\of{\vec v, q},
	\]
where $p\of{\vec x} = \frac{1}{2}\del{\hat\pi\of{\vec x} + \hat\pi\of{\vec x', -x^n}}$ and $\vec u = \del{\vec u', u^n}$ is given by  
	\[
		\vec u'\of{\vec x} = \frac{1}{2}\del{\hat{\vec{w}}'\of{\vec x} + \hat{\vec{w}}'\of{\vec x', -x^n}},
		\quad  u^n\of{\vec x} = \frac{1}{2}\del{ \hat w^n\of{\vec x} - \hat w^n\of{\vec x', -x^n}}.
	\]
Finally, $\vec u\cdot\vnu\of{\vec x', 0} = u^n\of{\vec x', 0} = 0$ and $(\vec u, p)$ also satisfies estimate \eqref{eq:stokes_apriori}. 
This concludes the proof.
\end{proof}

%


\section{Sobolev Domains and the Piola Transform}
\label{s:sobolev_piola}

\subsection{Local Diffeomorphism on Sobolev Domains} 
\label{s:loc_diff}
We begin with a definition of the local-diffeomorphism.
\begin{definition}[diffeomorphism]
\label{def:loc_diff}
A map $\Psi : \calU \rightarrow \calV$, where $\calU$, $\calV$ are open subsets of $\bbR^n$, is a $W^2_s$-diffeomorphism if $\Psi$ is of class $W^2_s$, is a bijection and $\Psi^{-1}$ is of class $W^2_s$. A map $\Psi$ is a local $W^2_s$-diffeomorphism if for each point $\xe \in \calU$ there exists an open set $\Ue \subset \calU$ containing $\xe$, such that $\Psi(\Ue) \subset \calV$ is open and $\Psi : \Ue \rightarrow \Psi(\Ue)$ is a $W^2_s$-diffeomorphism. 
\end{definition}

Given a $W^{2-1/s}_s$-domain $\Omega$ it is essential to extend the local graph representation $\omega$ in Definition~\ref{defn:sobolev_domain} to a smooth, open, bounded subset of $\bbR^n$ with the extension being $W^2_s$--regular. 

\begin{definition}
[bubble domain] 
\label{defn:bubble_domain}
An open set $\Theta\of{\xe, \delta}$ is called a \emph{bubble domain} of size $\delta$ if its $C^{\infty}$ boundary is obtained by smoothing the ``corners'' of the lower half-ball $\Be_{-}(\xe,\frac{3}{2}\delta) = \Be\del{\xe, \frac{3}{2}\delta} \cap \halfspace$ so that  
    \[
        \clos{\Be_{-}\of{\xe, \delta}} \subsetneq \clos{\Theta\of{\xe, \delta}} \subsetneq \clos{\Be_{-}\of{0, \frac{3}{2}\delta}}.
    \]
This is depicted in \figref{f:internal_estimate}.
\end{definition}

Our strategy consists of the following four steps:
\begin{itemize}
\item[Step 1.] In Lemma~\ref{lem:omega_est} we prove useful norm estimates for 
     $\omega \in \sob{2-1/s}{s}{\De(\xe',\delta)}$, where $\De(\xe',\delta)$ is an open disc of radius $\delta$ 
     centered at $\xe'$. 

\item[Step 2.] It is not possible to invoke a standard extension of $\omega$ to the bubble domain 
$\Theta(\xe, \delta)$ and still arrive at $W^2_s$-regularity for this extension. Therefore we introduce a smooth 
characteristic function $\varrho$ in Definition~\ref{def:smooth_char}, which permits us to define a compactly 
supported function $\calC\omega$ on a disc $\De(\xe',\delta)$ in Lemma~\ref{lem:omega_compact_support} and to 
prove norm estimates for $\calC\omega$ in terms of $\omega$. 
     
\item[Step 3.] We define an harmonic extension $\calE \omega$ of $\calC\omega$ to the bubble domain 
$\Theta(\xe, \delta)$ and prove the $W^2_s$-norm estimates for $\calE \omega$ in terms of $\omega$ in 
Lemma~\ref{lem:Hext}. 
     
\item[Step 4.] 
We define an extension $\widetilde{\calE}\omega := \varrho \calE \omega$ upon
multiplying $\calE \omega$ by a cutoff function $\varrho$ which is equal 1 in
$\Be(\xe,\delta/2)$ and vanishes outside $\Be(\xe,\delta)$ (see
\eqref{eq:tildeE}), and prove its $W^2_s$-regularity in
\lemref{lem:tcalE}.

\end{itemize}

This allows us to define $\Psi : \bbR^n_{-} \rightarrow \bbR^n$ as follows and prove that $\Psi$ is $W^2_s$-regular in Corollary~\ref{cor:omega_diffeomorphism}:
\begin{equation}\label{eq:loc_diff}
\left\{\begin{array}{l}
           \vec x' = \xe ' = \Psi'(\xe) \\
               x^n = \xe^n + \widetilde\calE\omega(\xe) = \Psi^n(\xe) , 
       \end{array}
\right.    
\end{equation}
where $\widetilde\calE\omega$ is the extension of Step 4.
We remark that the mapping 
$\vec x \mapsto \Psi^{-1}(\vec x) = \xe$ ``straightens out $\bdy\Omega$"; see \figref{f:RefMap}. 
The construction of such a $\Psi$ which is $W^2_s$--regular, on a
Sobolev domain which is only $W^{2-1/s}_s$--regular,  is one of the
key contributions of this paper.
We point out that a standard diffeomorphism will only be
$W^{2-1/s}_s$--regular,
which is not sufficient for our purposes.

Given $\vec x \in \bdy\Omega$, we rotate $\Omega$ about $\vec x = \xe'$
  so that $\omega$ in
  Definition~\ref{defn:sobolev_domain} satisfies additionally
    \begin{subequations}
    \begin{equation} \label{eq:omega_sobolev_gradient}
        \grad_{\xe'}\omega\of{\xe'} = 0.
    \end{equation}  
We further translate $\Omega$ in the $\vec e_n$ direction to enforce
    \begin{equation} \label{eq:omega_sobolev_integral}
        \fint_{\De\of{\xe', \delta}} \omega = 0.    
    \end{equation}
    \end{subequations}

\begin{lemma}[properties of $\omega$]
\label{lem:omega_est}
Let $\Omega$ be a domain of class $W^{2-1/s}_s$, $\vec x \in \bdy\Omega$, and $\omega$ satisfy \eqref{eq:omega_sobolev_gradient}-\eqref{eq:omega_sobolev_integral}. Then
    \begin{subequations}
    \begin{align}
        \norm{\omega}_{L^\infty(\De(\xe', \delta))} 
        &\lesssim \delta^{2-n/s} \normSZ{\omega}{2-1/s}s{\De(\xe', \delta)}  \label{eq:Linfomega} , \\
        \norm{\grad_{\xe'}\omega}_{L^\infty(\De\del{\xe',{\delta}})} 
        &\lesssim  \delta^{1-n/s} \normSZ{\omega}{2-1/s}s{\De\del{\xe',{\delta}}}  \label{eq:SemiLinfomega} ,  \\
        \normSZ{\omega}{1-1/s}s{\De(\xe', \delta)} 
        &\lesssim \delta^{n/s} \norm{\grad_{\xe'}\omega}_{L^\infty(\De\del{\xe',{\delta}})}   \label{eq:Semiomega} . 
    \end{align}
    \end{subequations}
\end{lemma}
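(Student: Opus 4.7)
The strategy rests on the subcritical fractional Sobolev embedding $W^{2-1/s}_s \hookrightarrow C^{1,1-n/s}$ on the $(n-1)$-dimensional disc, valid because $s>n$ forces $(2-1/s)-(n-1)/s = 2-n/s > 1$. The first and only nontrivial step is to establish the scale-invariant homogeneous bound
\begin{equation*}
    [\grad_{\xe'}\omega]_{C^{0,1-n/s}(\clos{\De(\xe',\delta)})}
    \lesssim \normSZ{\omega}{2-1/s}{s}{\De(\xe',\delta)}
\end{equation*}
with implicit constant independent of $\delta$. I obtain it by rescaling to the unit disc through $\tilde\omega(\tilde\ye') := \omega(\xe'+\delta\tilde\ye')$: a change of variables in the Slobodeckij double integral shows that the H\"older seminorm on the left and the $W^{2-1/s}_s$-seminorm on the right pick up exactly the same factor $\delta^{2-n/s}$, so the embedding constant is inherited $\delta$-free from $\De(0,1)$.

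With this bound in hand, \eqref{eq:SemiLinfomega} follows at once from $\grad_{\xe'}\omega(\xe')=0$:
\begin{equation*}
    |\grad_{\xe'}\omega(\ye')|
    = |\grad_{\xe'}\omega(\ye')-\grad_{\xe'}\omega(\xe')|
    \leq [\grad_{\xe'}\omega]_{C^{0,1-n/s}}|\ye'-\xe'|^{1-n/s}
    \lesssim \delta^{1-n/s}\normSZ{\omega}{2-1/s}{s}{\De(\xe',\delta)}.
\end{equation*}
For \eqref{eq:Linfomega}, I integrate $\grad_{\xe'}\omega$ along the segment from $\xe'$ to $\ye'$ and use the same H\"older control to get $|\omega(\ye')-\omega(\xe')| \lesssim \delta^{2-n/s}\normSZ{\omega}{2-1/s}{s}{\De(\xe',\delta)}$; the zero-average condition \eqref{eq:omega_sobolev_integral} then pins down the unknown constant $\omega(\xe')$ via
\begin{equation*}
    |\omega(\xe')| = \left|\omega(\xe') - \fint_{\De(\xe',\delta)}\omega\right|
    \leq \fint_{\De(\xe',\delta)}|\omega(\xe')-\omega(\ye')|\,d\ye'
    \lesssim \delta^{2-n/s}\normSZ{\omega}{2-1/s}{s}{\De(\xe',\delta)},
\end{equation*}
and the triangle inequality closes \eqref{eq:Linfomega}.

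Bound \eqref{eq:Semiomega} is the most elementary. Starting from the Slobodeckij representation in dimension $n-1$,
\begin{equation*}
    \normSZ{\omega}{1-1/s}{s}{\De(\xe',\delta)}^s
    = \int_{\De(\xe',\delta)}\!\!\int_{\De(\xe',\delta)}
      \frac{|\omega(\ye')-\omega(\we')|^s}{|\ye'-\we'|^{(n-1)+(1-1/s)s}}\,d\ye'\,d\we',
\end{equation*}
together with the Lipschitz estimate $|\omega(\ye')-\omega(\we')| \leq \norm{\grad_{\xe'}\omega}_{L^\infty}|\ye'-\we'|$, the task reduces to estimating the weakly singular integral $\iint |\ye'-\we'|^{2-n}\,d\ye'\,d\we'$ over $\De(\xe',\delta)\times\De(\xe',\delta)$, which a polar-coordinate computation (trivial when $n=2$) shows to scale as $\delta^n$. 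Taking $s$-th roots yields the claim.

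The main obstacle is verifying the $\delta$-independence of the embedding constant in the first display, since homogeneous Slobodeckij seminorms do not scale as cleanly as integer-order ones. What makes the argument work is that the exponents $2-n/s$ on both sides of the H\"older--Sobolev embedding match exactly, and this matching is the sharp threshold forced by $s>n$---the same threshold that underpins the entire paper. Everything else reduces to the fundamental theorem of calculus, the zero-mean normalization, and elementary estimation of a weakly singular integral.
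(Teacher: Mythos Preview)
Your proof is correct and follows essentially the same route as the paper's: the embedding $W^{2-1/s}_s\hookrightarrow C^{1,1-n/s}$ together with $\grad_{\xe'}\omega(\xe')=0$ for \eqref{eq:SemiLinfomega}, the zero-mean condition for \eqref{eq:Linfomega}, and the direct Slobodeckij computation for \eqref{eq:Semiomega}. Your explicit rescaling verification of the $\delta$-independence of the embedding constant is a point the paper uses but leaves implicit, and your line-integral derivation of \eqref{eq:Linfomega} is a minor variant of the paper's Poincar\'e argument $\|\omega\|_{L^\infty}\lesssim\delta\|\grad_{\xe'}\omega\|_{L^\infty}$ combined with \eqref{eq:SemiLinfomega}.
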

\begin{proof}
We proceed in three steps. For simplicity we set $\De_{\delta} := \De\of{\xe', \delta}$ and $\grad' := \grad_{\xe'}$.

\noindent \boxed{1} Invoking \eqref{eq:omega_sobolev_integral}, in conjunction with Poincar\'e inequality we deduce 
      \[
         \norm{\omega}_{L^\infty(\De_{\delta})} \lesssim \delta \norm{\grad' \omega}_{L^\infty(\De_{\delta})} . 
      \]

\noindent \boxed{2} Expression  \eqref{eq:omega_sobolev_gradient} implies 
      \begin{align*}
          \abs{\grad'\omega(\ye')} 
          = \abs{\grad'\omega(\ye')-\grad'\omega( \xe')} 
              \lesssim \delta^{1-n/s} \abs{\omega}_{C^{1,1-n/s}(\De_{\delta})}
          \lesssim \delta^{1-n/s}
          \, \abs{\omega}_{W^{2-1/s}_s(\De_{\delta})}  ,
      \end{align*}
      because $\sob{2-1/s}s{\De_{\delta}} \subset C^{1,1-n/s}(\De_{\delta})$. We
      thus deduce \eqref{eq:SemiLinfomega}, and 
      combining this with \boxed{1} we obtain \eqref{eq:Linfomega}. 
      
\noindent \boxed{3} To prove \eqref{eq:Semiomega}, we use the definition of the $W^{1-1/s}_s$ seminorm, i.e.
      \[
          \normSZ{\omega}{1-1/s}s{\De_{\delta}}^s 
          = \int_{\De_{\delta}}\int_{\De_{\delta}} 
            \frac{\abs{ \omega\of{\xe'} - \omega\of{\ye'}}}{\abs{\xe' - \ye'}^{s+n-2}}^s 
                \dif\xe' \dif\ye'  ,    
      \]
      because the exponent of the denominator is $(n-1)+s(1-\frac{1}{s}) = s+n-2$. 
      A direct estimate further yields
      \[
          \normSZ{\omega}{1-1/s}s{\De_{\delta}}^s       
          \le \norm{\grad' \omega}_{L^\infty(\De_{\delta})}^s \int_{\De_{\delta}}\int_{\De_{\delta}}  
          \frac{1}{\abs{\xe' - \ye'}^{n-2}} \dif\xe' \dif\ye' ,
      \]
      and the double integral is of order $\delta^n$. This implies                   
      \[
          \normSZ{\omega}{1-1/s}s{\De_{\delta}}^s \lesssim \delta^n \norm{\grad'\omega}_{L^\infty(\De_{\delta})}^s  
      \]  
     which is the asserted bound \eqref{eq:Semiomega}. 
\end{proof}

\begin{definition}[cutoff function] 
\label{def:smooth_char}
Given $\vec x \in \bbR^n$, a function $\varrho$ in $\CCinf{\bbR^n}$ such that $\varrho = 1$ in $B\of{\vec x , \delta/2}$, 
$0 \leq \varrho \leq 1$ in $B\of{\vec x , \delta} \setminus B\of{\vec x , \delta/2}$ and $\varrho = 0$ in $\bbR^{n}\setminus B\of{\vec x,\delta}$ will be called a \emph{cutoff function} of $B\of{\vec x , \delta/2}$.
\end{definition}
\begin{lemma}
[compactly supported graph] 
\label{lem:omega_compact_support}
Let $\Omega$ be a domain of class $W^{2-1/s}_s$, $\vec x \in \bdy\Omega$, and $\omega$ satisfy \eqref{eq:omega_sobolev_gradient}-\eqref{eq:omega_sobolev_integral}. Then $\calC\omega(\ye') := \varrho(\ye',\omega(\ye')) \omega(\ye')$ for $\ye' \in \De(\xe',\delta)$ satisfies 
$\calC\omega \in \sob{2-1/s}s{\De(\xe',\delta)}$, 
$\calC\omega\restriction_{\De(\xe',\delta/2)} = \omega$, and 
    \begin{equation}
    \label{eq:bhs_diffeomorphism_bound_a}
        \normS{\calC\omega}{2-1/s}s{\De\of{\xe', \delta}} 
            \lesssim \normS{\omega}{2-1/s}s{\De\of{\xe', \delta}}.
    \end{equation}
\end{lemma}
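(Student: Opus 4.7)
My plan is to decompose $\calC\omega = \tilde\varrho\cdot\omega$, where $\tilde\varrho(\ye') := \varrho(\ye',\omega(\ye'))$, and to exploit two features available because $s>n$: the embedding $\sob{2-1/s}{s}{\De(\xe',\delta)} \hookrightarrow C^{1,1-n/s}(\clos{\De(\xe',\delta)})$, which makes the composition $\varrho\circ(\mathrm{id},\omega)$ pointwise well defined with a classical chain rule, and the fact that $\sob{2-1/s}{s}{\De(\xe',\delta)}$ is a Banach algebra admitting standard composition estimates against smooth outer functions.

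First I would verify the matching property $\calC\omega = \omega$ on $\De(\xe',\delta/2)$. By \lemref{lem:omega_est} combined with the normalizations \eqref{eq:omega_sobolev_gradient}--\eqref{eq:omega_sobolev_integral}, one has $\|\omega\|_{L^\infty(\De(\xe',\delta))} \lesssim \delta^{2-n/s}$ with $2-n/s > 1$. Hence, up to reducing the radius $\delta$ (allowed because $\Omega$ is uniformly $W^{2-1/s}_s$), for every $\ye'\in\De(\xe',\delta/2)$ the point $(\ye',\omega(\ye'))$ lies in $B(\vec x,\delta/2)$; thus $\tilde\varrho(\ye')=1$ and $\calC\omega(\ye')=\omega(\ye')$.

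Next, for \eqref{eq:bhs_diffeomorphism_bound_a}, I would first control $\tilde\varrho$. Writing $\grad' := \grad_{\xe'}$, the chain rule yields
\[
\grad' \tilde\varrho(\ye') = (\grad'\varrho)(\ye',\omega(\ye')) + (\partial_n\varrho)(\ye',\omega(\ye'))\,\grad'\omega(\ye'),
\]
so $\|\tilde\varrho\|_{L^\infty} + \|\grad'\tilde\varrho\|_{L^\infty} \lesssim \|\varrho\|_{C^1}\bigl(1 + \|\grad'\omega\|_{L^\infty}\bigr)$, and a standard difference-quotient computation bounds the Slobodeckij seminorm $[\grad'\tilde\varrho]_{W^{1-1/s,s}}$ by $\|\varrho\|_{C^2}\bigl(1 + [\grad'\omega]_{W^{1-1/s,s}}\bigr)$. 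Combined with the $L^\infty$ bounds from \lemref{lem:omega_est} this gives $\|\tilde\varrho\|_{W^{2-1/s}_s(\De(\xe',\delta))} \lesssim 1 + \|\omega\|_{W^{2-1/s}_s(\De(\xe',\delta))}$. Then the product estimate on the Banach algebra $W^{2-1/s}_s$,
\[
\|uv\|_{W^{2-1/s}_s} \lesssim \|u\|_{L^\infty}\|v\|_{W^{2-1/s}_s} + \|u\|_{W^{2-1/s}_s}\|v\|_{L^\infty},
\]
applied to $u=\tilde\varrho$, $v=\omega$, together with $\|\tilde\varrho\|_{L^\infty}\le 1$ and $\|\omega\|_{L^\infty}\lesssim 1$, produces the desired bound \eqref{eq:bhs_diffeomorphism_bound_a}.

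The main obstacle I anticipate is the composition estimate for $\tilde\varrho$ in the Slobodeckij scale: while it is folklore whenever $\sigma s > n$ with $\sigma = 2-1/s$, one has to write it out carefully because the outer smooth function $\varrho$ has two kinds of arguments (the ``horizontal'' variables $\ye'$ and the ``vertical'' $\omega(\ye')$), so the mean-value bound and the nonlocal difference $|\omega(\ye')-\omega(\ze')|$ enter intertwined; the $L^\infty$/smallness check on $\De(\xe',\delta/2)$ and the final algebra estimate are comparatively routine once the composition bound is in place.
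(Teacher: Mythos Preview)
Your approach is sound in spirit and genuinely different from the paper's: you invoke the Banach-algebra and composition machinery for $W^{2-1/s}_s$ (legitimate since $(2-1/s)s>n-1$), whereas the paper expands $\grad'\calC\omega = (\grad'\tilde\varrho)\,\omega + \tilde\varrho\,\grad'\omega$ and estimates the Slobodeckij seminorm of each piece by hand, using the explicit $\delta$-powers from Lemma~\ref{lem:omega_est} together with the rescaling of the cutoff.

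The point you are missing, however, is that \emph{the implicit constant in \eqref{eq:bhs_diffeomorphism_bound_a} must be independent of $\delta$}; this is exactly why step~4 of the paper's proof checks that every contribution is ``$\mathcal O(1)$ in $\delta$'', and it is what makes the estimate usable downstream in Corollary~\ref{cor:omega_diffeomorphism}. Your abstract estimates hide this dependence. Concretely, the cutoff $\varrho$ lives at scale $\delta$, so $\|\varrho\|_{C^2}\sim\delta^{-2}$ and hence $[\grad'\tilde\varrho]_{W^{1-1/s}_s(\De_\delta)}\sim\delta^{n/s-2}\to\infty$ as $\delta\to 0$; your bound $\|\tilde\varrho\|_{W^{2-1/s}_s}\lesssim 1+\|\omega\|_{W^{2-1/s}_s}$ is therefore false with a $\delta$-uniform constant. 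Likewise, writing $\|\omega\|_{L^\infty}\lesssim 1$ discards the factor $\delta^{2-n/s}$ from \eqref{eq:Linfomega} that is precisely what is needed to cancel the blow-up of $\|\tilde\varrho\|_{W^{2-1/s}_s}$. Your scheme \emph{can} be repaired---the product of these two quantities does come out $\mathcal O(1)$---but only by tracking the $\delta$-powers explicitly, which in effect reproduces the paper's computation. A secondary imprecision: the Kato--Ponce-type product rule you quote, $\|uv\|_{W^{2-1/s}_s}\lesssim\|u\|_{L^\infty}\|v\|_{W^{2-1/s}_s}+\|u\|_{W^{2-1/s}_s}\|v\|_{L^\infty}$, does not hold on a bounded domain in this clean form; the honest estimate carries intermediate terms such as $\|\grad' v\|_{L^\infty}[u]_{W^{1-1/s}_s}$, and handling those again forces the scaling analysis you were hoping to avoid.
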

\begin{proof}
We proceed in four steps. For simplicity we again define 
$\De_{\delta} = \De\of{\xe', \delta}$ and $\grad' = \grad_{\xe'}$. 

\noindent \boxed{1} As $\grad'\calC\omega = (\grad' \varrho) \omega + \varrho \grad' \omega$, 
we deduce 
\begin{align*}
\norm{\grad' \calC\omega}_{L^s(\De_{\delta})} 
&\le \norm{\grad'\varrho}_{L^\infty(\De_{\delta})} \norm{\omega}_{L^s(\De_{\delta})}     + 
\norm{\varrho}_{L^\infty(\De_{\delta})} \norm{\grad'\omega}_{L^s(\De_{\delta})}  .  
\end{align*}    
Since $\norm{\grad'\varrho}_{L^\infty(\De_{\delta})} \lesssim \delta^{-1}$ and $\norm{\omega}_{L^s(\De_{\delta})} \lesssim \delta \norm{\grad'\omega}_{L^s(\De_\delta)}$ (Poincar\'e inequality), we infer that $\norm{\grad' \calC\omega}_{L^s(\De_\delta)} \lesssim \norm{\omega}_{W^1_s(\De_\delta)}$. 

\noindent \boxed{2} Invoking the definition of the $W^{1-1/s}_s$-seminorm, we get
\begin{align*}
\normSZ{\grad' \calC\omega}{1-1/s}s{\De_{\delta}}^s 
&= \int_{\De_{\delta}}\int_{\De_{\delta}} 
\frac{\abs{\grad'\calC \omega\of{\xe'} - \grad' \calC\omega\of{\ye'}}}{\abs{\xe' - \ye'}^{s+n-2}}^s 
\dif\xe' \dif\ye'    \\
&\le \int_{\De_{\delta}}\int_{\De_{\delta}} 
\frac{\abs{\omega\of{\xe'} \grad'\varrho\of{\xe'}  
- \omega\of{\ye'} \grad'\varrho\of{\ye'} }}{\abs{\xe' 
- \ye'}^{s+n-2}}^s \dif\xe' \dif\ye'   \\
&\quad + \int_{\De_{\delta}}\int_{\De_{\delta}} 
\frac{\abs{\varrho\of{\xe'}\grad'\omega\of{\xe'} 
- \varrho\of{\ye'}\grad'\omega\of{\ye'}}}{\abs{\xe' - \ye'}^{s+n-2}}^s 
\dif\xe' \dif\ye'                                    
= \textrm{I} + \textrm{II} . 
\end{align*}
We now proceed to tackle \textrm{I} and \textrm{II} separately. 

\noindent \boxed{3} We estimate the term \textrm{I} as follows: we add
and subtract $\omega(\xe') \grad'\varrho(\ye')$ to obtain 
\begin{align*}
\textrm{I} 
&\lesssim \int_{\De_{\delta}}\int_{\De_{\delta}}
\frac{\abs{\grad'\varrho\of{\xe'} 
- \grad'\varrho\of{\ye'} }^s \abs{\omega\of{\xe'}}^s}{\abs{\xe' 
- \ye'}^{s+n-2}} \dif\xe' \dif\ye'                 \\
&\quad +\int_{\De_{\delta}}\int_{\De_{\delta}}
\frac{\abs{\grad'\varrho\of{\ye'}}^s \abs{\omega\of{\xe'} - \omega(\ye')}^s}{\abs{\xe' 
- \ye'}^{s+n-2}} \dif\xe' \dif\ye'         \\
&\le \norm{\omega}_{L^\infty(\De_{\delta})}^s \normSZ{\grad'\varrho}{1-1/s}s{\De_{\delta}}^s             +
\norm{\grad'\varrho}_{L^\infty(\De_{\delta})}^s \normSZ{\omega}{1-1/s}s{\De_{\delta}}^s 
= \textrm{III} + \textrm{IV}.
\end{align*}

Likewise, we estimate \textrm{II} as follows:
\begin{align*}
\textrm{II} \le \normSZ{\varrho}{1-1/s}{s}{\De_{\delta}}^s 
\norm{\grad'\omega}_{L^\infty(\De_{\delta})}^s                                                           +   
\norm{\varrho}_{L^\infty(\De_{\delta})}^s  \normSZ{\grad'\omega}{1-1/s}s{\De_{\delta}} 
&= \textrm{V} + \textrm{VI}.
\end{align*}

\noindent \boxed{4}
Introducing a co-ordinate transformation $\xe' = \delta \hat{\xe}'$ implies 
\begin{align*}
\normSZ{\varrho}{1-1/s}s{\De_{\delta}} 
& = \delta^{\frac{n}{s}-1} \normSZ{\hat\varrho}{1-1/s}s{\De_{1}} ,
\\
\normSZ{\grad'\varrho}{1-1/s}s{\De_{\delta}} 
& = \delta^{\frac{n}{s}-2} \normSZ{\hat{\grad}'\hat\varrho}{1-1/s}s{\De_{1}} , \\
\norm{\grad'\varrho}_{L^\infty(\De_\delta)} &= \delta^{-1} \|\hat{\grad} \hat{\varrho}\|_{L^\infty(\De_1)} , 
\end{align*}
where $\De_1$ denotes the unit disc. 
We now show that \textrm{III}--\textrm{VI} 
are $\mathcal{O}(1)$ in $\delta$. Using \eqref{eq:Linfomega}-\eqref{eq:Semiomega}, we obtain 
\begin{align*}
\textrm{III}
&\lesssim \delta^{\del{2-\frac{n}{s}}s} \delta^{s\del{\frac{n}{s}-2}} 
\normSZ{\omega}{2-1/s}s{\De_\delta} = \normSZ{\omega}{2-1/s}s{\De_\delta} ,  \\
\textrm{IV}
&\lesssim \frac{1}{\delta^s} \delta^n \delta^{s\del{1-\frac{n}{s}}} 
\normSZ{\omega}{2-1/s}s{\De_\delta} = \normSZ{\omega}{2-1/s}s{\De_\delta} , 
\end{align*}
as well as
\[
\textrm{V}   \lesssim \delta^{s\del{\frac{n}{s}-1}}\delta^{s\del{1-\frac{n}{s}}} 
 \normSZ{\omega}{2-1/s}s{\De_\delta} = \normSZ{\omega}{2-1/s}s{\De_\delta}  . 
\]
Since the estimate of \textrm{VI} is immediate, we conclude the proof. 
\end{proof}

Next we extend $\calC\omega$ by zero to the boundary $\bdy \Theta(\xe,
\delta)$ of the bubble domain $\Theta(\xe,\delta)$ but still indicate it
by $\calC\omega$. We denote by $\calE\omega$ the harmonic extension of
$\calC\omega$ to $\Theta(\xe,\delta)$
\cite[Theorem~9.15]{DGilbarg_NTrudinger_2001a}, and point out
that this is not the usual extension which retains the regularity properties of $\calC\omega$.
We collect the properties of $\calE\omega$ in the following lemma.
\begin{lemma}[harmonic extension]
\label{lem:Hext}
Let $\Omega$ be a domain of class $W^{2-1/s}_s$, $\vec x \in \bdy\Omega$, and $\omega$ satisfy \eqref{eq:omega_sobolev_gradient}-\eqref{eq:omega_sobolev_integral}. The harmonic extension $\calE\omega$ of $\calC\omega$ to $\Theta(\xe,\delta)$ satisfies
\begin{subequations}
\begin{align}
\norm{\calE\omega}_{L^\infty(\Theta(\xe, \delta))} 
&\lesssim \norm{\omega}_{L^\infty(\De(\xe', \delta))} , \label{eq:LinfHext}\\
\normS{\calE\omega}2s{\Theta(\xe, \delta)} 
&\lesssim \normS{\omega}{2-1/s}s{\De(\xe', \delta)}  \label{eq:W2sHext} . 
\end{align}
\end{subequations}
\end{lemma}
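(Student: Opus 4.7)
My plan is to split the two estimates. The $L^\infty$ bound \eqref{eq:LinfHext} is a direct consequence of the maximum principle. After extending $\calC\omega$ by zero from $\De(\xe',\delta)$ to the rest of $\bdy\Theta(\xe,\delta)$, we have $\|\calC\omega\|_{L^\infty(\bdy\Theta)}\le\|\omega\|_{L^\infty(\De(\xe',\delta))}$ because $0\le\varrho\le1$ by Definition \ref{def:smooth_char}, and the weak maximum principle for the Laplacian applied to $\calE\omega$ delivers \eqref{eq:LinfHext} with no further work.

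For the $W^2_s$ bound, I would invoke standard elliptic regularity for the Dirichlet problem for the Laplacian on the smooth bubble domain. By Definition \ref{defn:bubble_domain}, $\Theta(\xe,\delta)$ has a $C^\infty$ boundary, so the trace operator $\trace\colon\sob{2}{s}{\Theta(\xe,\delta)} \to \sob{2-1/s}{s}{\bdy\Theta(\xe,\delta)}$ is surjective with a continuous right inverse that can be chosen to be harmonic, namely the solution of the Dirichlet problem with the prescribed datum. This yields
\[
\normS{\calE\omega}{2}{s}{\Theta(\xe,\delta)}
\lesssim \normS{\calC\omega}{2-1/s}{s}{\bdy\Theta(\xe,\delta)}.
\]
To reduce the right-hand side to a norm on the flat piece $\De(\xe',\delta)$, I would check that $\calC\omega$ is supported strictly inside $\De(\xe',\delta)$: writing $\vec x = (\xe',\omega(\xe'))$ one has $|(\ye',\omega(\ye'))-\vec x|\ge|\ye'-\xe'|$, so $\varrho(\ye',\omega(\ye'))=0$ as soon as $|\ye'-\xe'|$ reaches $\delta$. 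Hence the zero extension of $\calC\omega$ to $\bdy\Theta(\xe,\delta)$ preserves its $\sob{2-1/s}{s}$ norm, and combining with Lemma \ref{lem:omega_compact_support} converts the displayed bound into \eqref{eq:W2sHext}.

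The main obstacle I expect is citing the correct version of elliptic regularity: one needs the $L^s$ boundary-regularity theory for the Dirichlet Laplacian with fractional Sobolev data in $\sob{2-1/s}{s}{\bdy\Theta}$, not merely the Hilbert-space version. Since $\bdy\Theta(\xe,\delta)$ is of class $C^\infty$ by construction, this is exactly the classical Agmon--Douglis--Nirenberg/Lions--Magenes framework, which supplies the harmonic lifting of prescribed boundary data with continuity in $W^2_s$. Once this ingredient is granted, the rest of the argument is the trace and support bookkeeping sketched above, supplemented by the norm estimate already proven in Lemma \ref{lem:omega_compact_support}.
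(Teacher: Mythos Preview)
Your proposal is correct and follows essentially the same route as the paper: the maximum principle gives \eqref{eq:LinfHext}, and $L^s$ elliptic regularity for the Dirichlet Laplacian on the smooth bubble domain (the paper cites \cite[Lemma~9.17]{DGilbarg_NTrudinger_2001a} rather than Agmon--Douglis--Nirenberg, but the content is the same) yields \eqref{eq:W2sHext} once one knows $\calC\omega\in\sob{2-1/s}{s}{\bdy\Theta(\xe,\delta)}$. Your additional care in checking that $\calC\omega$ is supported in $\overline{\De(\xe',\delta)}$, which lies strictly inside the flat portion of $\bdy\Theta(\xe,\delta)$ by Definition~\ref{defn:bubble_domain}, so that the zero extension preserves the fractional Sobolev regularity, is a detail the paper leaves implicit.
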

\begin{proof}
As $\bdy \Theta(\xe, \delta)$ is smooth and $\calC\omega \in \sob{2-1/s}s{\bdy \Theta(\xe, \delta)}$, the harmonic extension $\calE\omega$ exists in $\sob2s{\Theta(\xe,\delta)}$ and
satisfies \eqref{eq:W2sHext} \cite[Lemma~9.17]{DGilbarg_NTrudinger_2001a}. Expression \eqref{eq:LinfHext} is due to the maximum principle.
\end{proof}

We remark that \lemref{lem:Hext} is the first place where we have used the bubble domain and the smoothness of its boundary. Since the map $\Psi$ in \eqref{eq:loc_diff} is defined over $\bbR^n_{-}$, we introduce an extension $\widetilde{\calE}\omega$ of $\calE\omega$ as follows
\begin{equation}\label{eq:tildeE}
\widetilde\calE\omega := \varrho \calE \omega , 
\end{equation}
where $\varrho$ is the cutoff function in
Definition~\ref{def:smooth_char}. In view of \eqref{eq:tildeE} we
remark that $\Psi$ in \eqref{eq:loc_diff} satisfies
(P\ref{P1})-(P\ref{P3}) in \S \ref{s:notation}.
It remains to show that $\Psi$ is a local $W^2_s$--diffeomorphism, but that requires studying the following properties of $\widetilde\calE\omega$.
\begin{lemma}[properties of $\widetilde\calE\omega$]
\label{lem:tcalE}
Let $\Omega$ be a domain of class $W^{2-1/s}_s$, $\vec x \in \bdy\Omega$, and $\omega$ satisfy \eqref{eq:omega_sobolev_gradient}-\eqref{eq:omega_sobolev_integral}. Then
   \begin{subequations}\begin{align}
       \label{eq:omega_estimate_b}
        |\widetilde\calE\omega|_{W^1_\infty(\Theta\of{\xe,\delta})}
            &\lesssim \delta^{1-n/s}\normSZ{\omega}{2-1/s}{s}{\De\of{\xe', \delta}} , \\
       \label{eq:omega_estimate_a}
        \| \widetilde\calE\omega\|_{W^2_s(\Theta\of{\xe, \delta})}  
            &\lesssim \normS{\omega}{2-1/s}{s}{\De\of{\xe', \delta}} .        
    \end{align}\end{subequations}
\end{lemma}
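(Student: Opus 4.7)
The plan is to apply the Leibniz rule to $\widetilde\calE\omega = \varrho\,\calE\omega$ and combine three layered ingredients: the $L^\infty$ bound on $\calE\omega$ from the maximum principle \eqref{eq:LinfHext} together with Lemma~\ref{lem:omega_est}, the $W^2_s$-regularity \eqref{eq:W2sHext}, and the scaling of the cutoff $\varrho$ (for which $\|\nabla^k\varrho\|_{L^\infty}\lesssim \delta^{-k}$ by Definition~\ref{def:smooth_char}). Setting $M:=\normS{\omega}{2-1/s}{s}{\De(\xe',\delta)}$ throughout, these give the two endpoint bounds $\|\calE\omega\|_{L^\infty(\Theta)}\lesssim \delta^{2-n/s} M$ and $\|\calE\omega\|_{W^2_s(\Theta)}\lesssim M$.

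The crucial intermediate step is to upgrade these endpoint estimates to the mid-scale bounds
\[
\|\calE\omega\|_{L^s(\Theta)} \lesssim \delta^2 M, \qquad \|\nabla \calE\omega\|_{L^s(\Theta)} \lesssim \delta\,M, \qquad \|\nabla \calE\omega\|_{L^\infty(\Theta)} \lesssim \delta^{1-n/s} M,
\]
which will exactly cancel the negative $\delta$-powers produced by differentiating $\varrho$. The first bound is immediate from H\"older's inequality and $|\Theta(\xe,\delta)|\sim\delta^n$; the other two follow from Gagliardo-Nirenberg interpolation between $\|\calE\omega\|_{L^\infty}$ and $\|D^2\calE\omega\|_{L^s}$, with exponents forced by dimensional consistency. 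Equivalently, I can pull $\Theta(\xe,\delta)$ back to a unit-size bubble via $\hat x = x/\delta$, invoke the scale-free Sobolev embedding $W^2_s\hookrightarrow W^1_\infty$ (valid since $s>n$), and undo the scaling.

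With these estimates in hand, \eqref{eq:omega_estimate_b} follows from $\nabla\widetilde\calE\omega = (\nabla\varrho)\calE\omega + \varrho\nabla\calE\omega$ by taking $L^\infty$ norms: the first summand is at most $\|\nabla\varrho\|_\infty\|\calE\omega\|_\infty \lesssim \delta^{-1}\cdot\delta^{2-n/s} M$, and the second is directly controlled by the interpolation bound $\|\nabla\calE\omega\|_\infty\lesssim \delta^{1-n/s}M$. For \eqref{eq:omega_estimate_a}, I expand
\[
D^2\widetilde\calE\omega = (D^2\varrho)\,\calE\omega + 2(\nabla\varrho)\otimes\nabla\calE\omega + \varrho\, D^2\calE\omega,
\]
and estimate each summand in $L^s(\Theta)$: the product rule supplies the cutoff factors $\delta^{-2}$, $\delta^{-1}$, and $1$, and the interpolation bounds supply the compensating positive $\delta$-powers $\delta^{2}$, $\delta$, and $1$, yielding a term of order $M$ in each case. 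The lower-order contributions $\|\widetilde\calE\omega\|_{L^s}$ and $\|\nabla\widetilde\calE\omega\|_{L^s}$ are bounded identically.

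The main obstacle is the delicate balance of negative $\delta$-powers from derivatives of $\varrho$ against positive $\delta$-powers hidden in the $L^\infty$ bound on $\calE\omega$; both must match exactly at $\delta^{1-n/s}$ for \eqref{eq:omega_estimate_b} and at $\delta^0$ for \eqref{eq:omega_estimate_a}, with no slack. Achieving this balance is precisely the role of the Gagliardo-Nirenberg interpolation on the smoothed half-ball $\Theta(\xe,\delta)$, which I view as the technical heart of the lemma; once those interpolation estimates are in place, the remaining product-rule bookkeeping is routine.
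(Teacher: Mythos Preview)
Your approach is correct and complete, but it diverges from the paper's proof at the crucial step of bounding $\|\nabla\calE\omega\|_{L^\infty(\Theta(\xe,\delta))}$. The paper does not use Gagliardo--Nirenberg interpolation or a scaling argument. Instead it writes $\nabla\calE\omega = (\nabla\calE\omega - \overline{\nabla\calE\omega}) + \overline{\nabla\calE\omega}$, where $\overline{\nabla\calE\omega}$ is the average over a ball $\Be\subset\Theta(\xe,\delta)$ at distance $\sim\delta$ from the flat boundary. The oscillation term is handled by a scaled Poincar\'e (Morrey--Sobolev) inequality, yielding $\delta^{1-n/s}|\calE\omega|_{W^2_s}$. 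For the average, the paper exploits the \emph{harmonicity} of $\calE\omega$ directly via the interior gradient estimate for harmonic functions \cite[Theorem~2.10]{DGilbarg_NTrudinger_2001a}, namely $\|\nabla\calE\omega\|_{L^\infty(\Be)}\lesssim \delta^{-1}\|\calE\omega\|_{L^\infty(\Theta)}$, and then invokes the maximum principle bound \eqref{eq:LinfHext} together with \eqref{eq:Linfomega}. Your interpolation route uses only the two endpoint bounds $\|\calE\omega\|_{L^\infty}\lesssim\delta^{2-n/s}M$ and $\|\calE\omega\|_{W^2_s}\lesssim M$ and is therefore more robust---it would apply to any extension enjoying these bounds, not just the harmonic one---whereas the paper's argument is more structural, making a second essential use of harmonicity beyond the maximum principle. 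The product-rule bookkeeping for \eqref{eq:omega_estimate_a} is essentially the same in both proofs.
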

\begin{proof}
For simplicity we use the notation 
$\Theta_{\delta} = \Theta(\xe,\delta)$, $\De_{\delta} = \De(\xe',\delta)$, and $\grad = \grad_\xe$. 

\noindent \boxed{1} The proof of \eqref{eq:omega_estimate_b} is tricky
and we split it into two steps. 
Using the definition of $\widetilde\calE\omega$ we obtain
\[
\|\grad\widetilde\calE\omega\|_{L^\infty(\Theta_{\delta})}
\le \norm{\grad\varrho}_{L^\infty(\Theta_{\delta})} \norm{\calE\omega}_{L^\infty(\Theta_{\delta})}        +
\norm{\varrho}_{L^\infty(\Theta_{\delta})} \norm{\grad\calE\omega}_{L^\infty(\Theta_{\delta})} .
\]
As $\norm{\grad\varrho}_{L^\infty(\Theta_{\delta})} \lesssim \frac{1}{\delta}$ and $\norm{\calE\omega}_{L^\infty(\Theta_{\delta})} \lesssim \delta 
\norm{\grad\calE\omega}_{L^\infty(\Theta_{\delta})}$ by Poincar\'e
inequality, we deduce 
\[
\| \grad\widetilde\calE\omega\|_{L^\infty(\Theta_{\delta})} 
\lesssim \norm{\grad\calE\omega}_{L^\infty(\Theta_{\delta})} . 
\]

\noindent \boxed{2}
Let $\Be = \Be\big((\xe',\xe^n-\frac{\delta}{2}),\frac{\delta}{4}\big)$ be
  the ball of center $(\xe',\xe^n-\frac{\delta}{2})$ and radius $\frac{\delta}{4}$ 
depicted in \figref{f:internal_estimate}.
Adding and subtracting $\overline{\grad\calE\omega} := \fint_{\Be} \grad \calE\omega$ to
$\norm{\grad\calE\omega}_{L^\infty(\Theta_{\delta})}$,
and applying triangle inequality we arrive at
\begin{align*}
\norm{\grad\calE\omega}_{L^\infty(\Theta_\delta)}
&\le \| \grad\calE\omega - \overline{\grad\calE\omega}\|_{L^\infty(\Theta_{\delta})}       +
\|\overline{\grad\calE\omega}\|_{L^\infty(\Theta_{\delta})}     
= \textrm{I} + \textrm{II} . 
\end{align*}

\begin{figure}[h]
    \centering
    \includegraphics[width=0.25\textwidth]{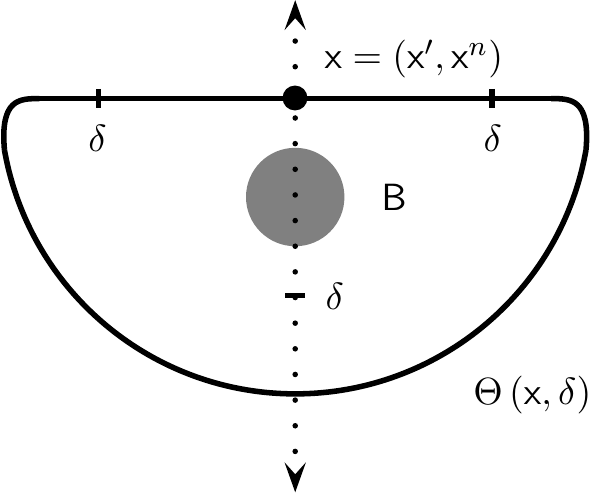}        
    \caption{The bubble domain $\Theta(\xe,\delta)$ centered at $\xe$ (solid curve) and ball $\Be = \Be\big((\xe',\xe^n-\frac{\delta}{2}),\frac{\delta}{4}\big)$.}
    \label{f:internal_estimate}
\end{figure}

We deal with terms \textrm{I} and \textrm{II} separately. To estimate \textrm{I} we use Poincar\'e inequality
\[
\textrm{I} = \|\grad\calE\omega - \overline{\grad\calE\omega}\|_{L^\infty(\Theta_{\delta})}
\lesssim \delta^{1-\frac{n}{s}} \norm{\grad\calE\omega}_{\sob1s{\Theta_{\delta}}}  = \delta^{1-\frac{n}{s}} | \calE\omega |_{W^2_s(\Theta_\delta)} .
\]

On the other hand, we estimate \textrm{II} as follows:
$
\textrm{II} 
\le \fint_\Be \abs{\grad\calE\omega}             
\le \norm{\grad\calE\omega}_{L^\infty(\Be)} . 
$
Invoking the interior estimate for derivatives of a harmonic function \cite[Theorem 2.10]{DGilbarg_NTrudinger_2001a} 
we obtain
\[
\normL{\grad\calE\omega}\infty{\Be}
\leq n\delta^{-1}\normL{\calE\omega}\infty {\Theta_{\delta}} . 
\]
Using \eqref{eq:LinfHext} followed by \eqref{eq:Linfomega} yields 
\[
\textrm{II} \lesssim \delta^{-1} \normL{\omega}\infty{\De_{\delta}} 
\lesssim \delta^{1-\frac{n}{s}} \normSZ{\omega}{2-1/s}s{\De_{\delta}}. 
\]
The estimates for \textrm{I} and \textrm{II} yield $|\calE\omega|_{W^1_\infty(\Theta_\delta)} \le \delta^{1-\frac{n}{s}}  |\omega|_{W^{2-1/s}_s(\De_\delta)}$, whence \eqref{eq:omega_estimate_b} follows.

\noindent \boxed{3} To prove \eqref{eq:omega_estimate_a} we use the definition of 
$\widetilde\calE\omega$ to arrive at 
\begin{multline*}
\|\widetilde\calE\omega\|_{\sob{2}s{\Theta_{\delta}}} 
\lesssim \|D^2\varrho\|_{L^s(\Theta_\delta)} \normL{\calE\omega}\infty{\Theta_\delta}
+ \normL{\grad\varrho}s{\Theta_\delta} \normL{\grad\calE\omega}\infty{\Theta_\delta} \\
+ \|D^2\calE\omega\|_{L^s(\Theta_\delta)}    
+ \normL{\grad\varrho}s{\Theta_\delta} \normL{\calE\omega}\infty{\Theta_\delta} 
+ \normL{\grad\calE\omega}\infty{\Theta_\delta}
+\normL{\calE\omega}s{\Theta_\delta} .
\end{multline*}
Since $\normL{D^2\varrho}s{\Theta_\delta} \lesssim \delta^{\frac{n}{s} - 2}$ 
and
$\normL{\calE\omega}\infty{\Theta_\delta} \lesssim \delta \normL{\grad\calE\omega}\infty{\Theta_\delta}$
(Poincar\'e inequality), Step \boxed{2} shows that the first term is bounded by $\normSZ{\omega}{2-1/s}s{\De_{\delta}}$. 
Similar arguments, in conjunction with \eqref{eq:W2sHext} shows that the remaining terms are also bounded by $\normSZ{\omega}{2-1/s}s{\De_{\delta}}$, 
which is \eqref{eq:omega_estimate_a}.
\end{proof}
\begin{corollary}
[$W^2_s$-diffeomorphism] 
\label{cor:omega_diffeomorphism}
Let $\Omega$ be a domain of class $W^{2-1/s}_s$, $\vec x \in
\bdy\Omega$, and $\omega$ satisfy
\eqref{eq:omega_sobolev_gradient}-\eqref{eq:omega_sobolev_integral}. If
  $\delta>0$ is sufficiently small, then $\Psi$ defined in \eqref{eq:loc_diff} is a local
$W^2_s$--diffeomorphism and satisfies 
\begin{align}
        \label{eq:omega_estimate_c}   
        \normLi{1 - \det{\grad_{\xe}\Psi}}{\Theta\of{\xe, \delta}}
            &\lesssim \delta^{1-n/s}\normSZ{\omega}{2-1/s}s{\De\of{\xe', \delta}} .
\end{align}            
\end{corollary}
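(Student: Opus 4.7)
My plan is to exploit the triangular structure of $\Psi$ in \eqref{eq:loc_diff}. Write
\[
\grad_{\xe}\Psi(\xe) =
\begin{pmatrix}
\matI_{n-1} & \vec 0 \\
\grad_{\xe'}\widetilde\calE\omega(\xe) & 1 + \partial_{\xe^n}\widetilde\calE\omega(\xe)
\end{pmatrix},
\]
so that $\det(\grad_{\xe}\Psi) = 1 + \partial_{\xe^n}\widetilde\calE\omega$. Consequently
\[
\|1 - \det(\grad_{\xe}\Psi)\|_{L^\infty(\Theta(\xe,\delta))}
 \le |\widetilde\calE\omega|_{W^1_\infty(\Theta(\xe,\delta))}
 \lesssim \delta^{1-n/s}\normSZ{\omega}{2-1/s}{s}{\De(\xe',\delta)},
\]
by \eqref{eq:omega_estimate_b}, which is precisely \eqref{eq:omega_estimate_c}.

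Next I would choose $\delta>0$ small enough (depending on $\|\omega\|_{W^{2-1/s}_s}$, which is under control since $s>n$) so that $\det(\grad_{\xe}\Psi)\ge 1/2$ throughout $\Theta(\xe,\delta)$; this is where the fractional gain $1-n/s>0$ is essential. Bijectivity follows directly from the triangular form: the first $n-1$ coordinates are the identity, while for each fixed $\xe'$ the map $\xe^n\mapsto \xe^n+\widetilde\calE\omega(\xe',\xe^n)$ has derivative $\ge 1/2$ and is therefore a strictly increasing homeomorphism onto its image.

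The remaining task is to show $\Psi^{-1}\in W^2_s$. From $\Psi\in W^2_s$ (by \eqref{eq:omega_estimate_a}) and $s>n$, Sobolev embedding gives $\Psi\in C^{1,1-n/s}$, so $\grad\Psi$ is H\"older continuous, and the classical inverse function theorem produces a $C^1$ inverse $\Psi^{-1}$ with $\grad\Psi^{-1} = (\grad\Psi)^{-1}\circ\Psi^{-1}$. Since $\det\grad\Psi\ge 1/2$, Cramer's rule expresses $(\grad\Psi)^{-1}$ as a rational function of the entries of $\grad\Psi\in W^1_s$, and the algebra of $W^1_s\cap L^\infty$ (valid because $s>n$) yields $(\grad\Psi)^{-1}\in W^1_s$. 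Finally, composition with the bi-Lipschitz map $\Psi^{-1}$ preserves $W^1_s$ regularity via the change-of-variables formula applied to the chain-rule identity $D^2\Psi^{-1} = -(\grad\Psi)^{-1}(D^2\Psi)(\grad\Psi)^{-1}(\grad\Psi)^{-1}\circ\Psi^{-1}$, so $\Psi^{-1}\in W^2_s$.

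The main obstacle is the last step, namely transferring $W^1_s$ regularity under the composition with $\Psi^{-1}$; this is where one really uses that the Jacobian is bounded and bounded away from zero, and that the ambient regularity is above the critical exponent $n$. Everything else is a routine consequence of Lemma \ref{lem:tcalE} and the triangular structure of $\Psi$.
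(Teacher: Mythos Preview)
Your proof is correct and follows essentially the same approach as the paper: compute $\det(\grad_{\xe}\Psi)=1+\partial_{\xe^n}\widetilde\calE\omega$ from the triangular structure, invoke \eqref{eq:omega_estimate_b} to obtain \eqref{eq:omega_estimate_c}, and take $\delta$ small so the Jacobian stays above $1/2$. The paper is terser---it simply cites the inverse function theorem for the diffeomorphism claim---whereas you spell out global injectivity via monotonicity in $\xe^n$ and the $W^2_s$ regularity of $\Psi^{-1}$ via Cramer's rule and the $W^1_s\cap L^\infty$ algebra; these are useful details the paper leaves implicit.
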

\begin{proof}
In view of Definition~\ref{def:loc_diff} and the inverse function theorem, 
$\Psi$ is a local $W^2_s$-diffeomorphism, if and only if $\grad_{\ye}
\Psi(\ye)$ is an isomorphism for every
$\ye \in \Theta(\xe, \delta)$.
We first observe that the definition of $\Psi$ in \eqref{eq:loc_diff} and the properties of $\widetilde\calE\omega$ in \lemref{lem:tcalE} yield $\Psi \in \sob2s{\bbR^n}$. Moreover, 
\eqref{eq:loc_diff} implies
\[
\det{\grad_{\ye}\Psi}(\ye) = 1 - \partial_{\ye^n} \widetilde\calE \omega(\ye)   ,  
\]
whence
\[
\normL{1-\det{\grad_{\ye}\Psi}}\infty{\Theta(\xe,\delta)} 
\le \| \grad_{\ye}\widetilde\calE\omega\|_{L^\infty(\Theta(\xe,\delta))}  .
\]
Using \eqref{eq:omega_estimate_b} we obtain \eqref{eq:omega_estimate_c}. 
Finally, upon choosing $\delta$ small enough in \eqref{eq:omega_estimate_c}, for every $\ye \in \Theta(\xe, \delta)$, we obtain $\abs{1 - \det{\grad_{\ye} \Psi}} < 1/2$, whence $\abs{\det \grad_{\ye} \Psi(\ye)} > 1/2$. Thus, $\grad_{\ye}\Psi (\ye)$ is invertible for every $\ye \in \Theta(\ye,\delta)$, which completes the proof. 
\end{proof}
%


\subsection{Piola Transform}
\newcommand{\matPH}{{\mat{\hat{P}}}}

The purpose of this section is to analyze the Piola transform, a
mapping which preserves the essential boundary condition $\vec u\cdot \vnu = 0$
after the boundary of $\Omega$ has been flattened. We will restrict the presentation to a \emph{local $W^2_s$-diffeomorphism} $\Psi$, $s > n$. 
which maps the reference domain $\Qe$ (bounded or unbounded)
one-to-one and onto a physical domain $Q$, i.e.,  
	\begin{equation*}
		\fullfunction{\Psi}{\Qe}{Q}{\xe}{\vec x},
	\end{equation*}
with $\Ue = \supp\of{\calI - \Psi}$ and $U = \Psi\of{\Ue}$. The
construction of such a $\Psi$ was the subject of
Corollary~\ref{cor:omega_diffeomorphism}, but we do not need in this section to use a particular form of $\Psi$. We remark that for the $\Psi$ constructed in Corollary~\ref{cor:omega_diffeomorphism}, we have $\Ue = \supp\of{\calI - \Psi} \subset \Be_{-}(\xe,\delta)$.

\begin{definition}[Piola transform]
\label{def:piola}
Let $\vec P : \Qe \rightarrow \bbR^{n\times n}$ and $\vec P^{-1} : Q \rightarrow \bbR^{n \times n}$ be the maps $\mat{P} := {J_{\xe}}^{-1}\grad_{\xe} \Psi$, $\matP^{-1} := {J_{\vec x}}^{-1}\grad_{\vec x} \Psi^{-1}$ 
with $J_{\xe} = \det{\grad_{\xe} \Psi}$ and $J_{\vec x} = \det \grad_{\vec x} \Psi^{-1}$. We say two vector fields $\vec v : Q \rightarrow \bbR^n$ and $\ve : \Qe \rightarrow \bbR^n$ are the Piola transforms of each other if and only if 
	\begin{equation}
	\label{eq:bhs_piola_of_v}
	    \vec v \composed \Psi = \matP \ve  , \quad
	    \ve \composed \Psi^{-1} =  \matP^{-1} \vec v ,
	\end{equation}
In view of the inverse function theorem we also have $ \mat{P}^{-1} \composed \Psi = J_{\xe} (\del{\grad_{\xe} \Psi}^{-1} \composed \Psi)$.
\end{definition}

The Piola transform is instrumental in dealing with vector-valued functions because it preserves the divergence and normals 
as the following identities illustrate. 

\begin{lemma}
[Piola identities] 
\label{lem:piola}
If $\ve \in [H^1(\Qe)]^n$ is the Piola transform of $\vec v \in [H^1(Q)]^n$ and $q \in H^1(Q)$, then the following statements hold:
    \begin{subequations}
	\begin{align}
		\label{eq:piola_a}
		 \int_{Q} \grad_{\vec x} q \cdot \vec v \dif\vec x 
		    &=\int_{\Qe} \grad_{\vec\xe}\qe \cdot \vec\ve \dif \vec\xe, \\
		\label{eq:piola_b}    
		\int_{Q} q \divg_{\vec x}{\vec v} \dif\vec x
		    &= \int_{\Qe} \qe \divg_{\vec\xe}{\vec\ve} \dif \vec\xe, \\
		\label{eq:piola_c}    
        \int_{\bdy Q} q \vec v\cdot\vec\nu_{\vec x} \dif s_{\vec x} 
		    &= \int_{\bdy\Qe} \qe \vec\ve \cdot \vnu_{\vec\xe} \dif s_{\vec\xe} , 
	\end{align}
	\end{subequations}
where $\vec\nu_{\vec x}$ and $\vec\nu_{\vec \xe}$ are outward unit normals on $\bdy Q$ and $\bdy\Qe$ respectively. 
Moreover,
\begin{equation}\label{eq:piola_identity}
\vec v\cdot\vec\nu_{\vec x} \dif s_{\vec x} 
= \vec\ve \cdot \vnu_{\vec\xe} \dif s_{\vec\xe} .    
\end{equation}
\end{lemma}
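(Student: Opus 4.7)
The plan is to derive all four identities from three classical ingredients: change of variables $d\vec x = J_{\xe}\,d\xe$, the chain rule $\grad_{\xe}\qe = (\grad_{\xe}\Psi)^T(\grad_{\vec x} q)\composed\Psi$ (for the scalar pullback $\qe := q\composed\Psi$), and Nanson's formula relating oriented surface elements under $\Psi$. Because $\Psi\in W^2_s$ with $s>n$, Sobolev embedding gives $\grad_{\xe}\Psi\in W^1_s\hookrightarrow C^{0,1-n/s}$, and Corollary~\ref{cor:omega_diffeomorphism} yields $J_{\xe}\ge 1/2$ pointwise; hence $\matP$, $\matP^{-1}$, and $(\grad_{\xe}\Psi)^{\pm T}$ are bounded pointwise multipliers preserving $H^1$. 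I would first establish each identity for smooth $\Psi$, $\vec v$, $q$ and then extend by mollification in the strong $W^2_s$-topology of $\Psi$ combined with density of $C^\infty$ in $H^1$.

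For \eqref{eq:piola_a}, combining change of variables with the chain rule and the cancellation $(\grad_{\xe}\Psi)^{-T}\vec a\cdot(\grad_{\xe}\Psi)\vec b=\vec a\cdot\vec b$ yields
\[
\int_Q \grad_{\vec x} q \cdot \vec v\,d\vec x
= \int_{\Qe} \bigl[(\grad_{\xe}\Psi)^{-T}\grad_{\xe}\qe\bigr]\cdot\bigl[J_{\xe}^{-1}(\grad_{\xe}\Psi)\ve\bigr]\,J_{\xe}\,d\xe
= \int_{\Qe}\grad_{\xe}\qe\cdot\ve\,d\xe.
\]
For \eqref{eq:piola_b} I invoke the classical pointwise Piola identity
\[
(\divg_{\vec x}{\vec v})\composed\Psi = J_{\xe}^{-1}\divg_{\xe}{\ve},
\]
which rests on the column-wise algebraic identity $\divg_{\xe}\bigl(J_{\xe}(\grad_{\xe}\Psi)^{-T}\bigr)=\vec 0$; multiplying by $q\composed\Psi$, integrating, and invoking change of variables cancels $J_{\xe}$ against $J_{\xe}^{-1}$ and yields \eqref{eq:piola_b}. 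Summing \eqref{eq:piola_a} and \eqref{eq:piola_b} and applying the divergence theorem on $Q$ and on $\Qe$ separately delivers \eqref{eq:piola_c}.

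For the sharper pointwise identity \eqref{eq:piola_identity} I would appeal to Nanson's formula $\vec\nu_{\vec x}\,ds_{\vec x}=J_{\xe}(\grad_{\xe}\Psi)^{-T}\vec\nu_{\xe}\,ds_{\xe}$ applied to $\bdy Q=\Psi(\bdy\Qe)$, and compute
\[
\vec v\cdot\vec\nu_{\vec x}\,ds_{\vec x}
=\bigl[J_{\xe}^{-1}(\grad_{\xe}\Psi)\ve\bigr]\cdot\bigl[J_{\xe}(\grad_{\xe}\Psi)^{-T}\vec\nu_{\xe}\bigr]\,ds_{\xe}
=\ve\cdot\vec\nu_{\xe}\,ds_{\xe}.
\]
The main (modest) obstacle is regularity: the pointwise Piola identity and Nanson's formula are standard for $C^2$ diffeomorphisms, and I would need to verify they persist a.e.\ under $W^2_s$-regularity of $\Psi$ together with $J_{\xe}\ge 1/2$. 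This proceeds by mollifying $\Psi$ at the $W^2_s$ level and passing to the limit in $W^1_s$, which preserves $\grad_{\xe}\Psi$, $(\grad_{\xe}\Psi)^{-T}$, and $J_{\xe}$ thanks to continuity of matrix inversion on the set of uniformly nondegenerate matrices.
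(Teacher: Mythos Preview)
Your argument is correct, but the paper takes a shorter route that sidesteps the regularity issues you flag. The paper proves \eqref{eq:piola_a} exactly as you do, via change of variables and the chain rule. For \eqref{eq:piola_b}--\eqref{eq:piola_identity}, however, it does \emph{not} invoke the pointwise Piola identity $\divg_{\xe}\bigl(J_{\xe}(\grad_{\xe}\Psi)^{-T}\bigr)=\vec 0$ or Nanson's formula at all. Instead, it applies the divergence theorem on $Q$ and on $\Qe$ to rewrite both sides of \eqref{eq:piola_a} as a bulk divergence term plus a boundary flux, yielding
\[
\int_{\bdy Q} q\,\vec v\cdot\vec\nu_{\vec x}\,ds_{\vec x} - \int_Q q\,\divg_{\vec x}\vec v\,d\vec x
= \int_{\bdy\Qe}\qe\,\ve\cdot\vec\nu_{\xe}\,ds_{\xe} - \int_{\Qe}\qe\,\divg_{\xe}\ve\,d\xe.
\]
Testing with $\qe\in H^1_0(\Qe)$ kills the boundary terms and gives \eqref{eq:piola_b}; substituting \eqref{eq:piola_b} back yields \eqref{eq:piola_c}; and since $q$ is arbitrary, \eqref{eq:piola_identity} follows. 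This buys the paper a cleaner regularity story: only first derivatives of $\Psi$ enter, so no mollification of $\Psi$ or a.e.\ verification of the cofactor identity is needed. Your route is more explicit and delivers \eqref{eq:piola_identity} as a genuine pointwise consequence of Nanson's formula rather than via a density argument on test functions, at the cost of the extra approximation step you correctly identified.
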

\begin{proof}
The Definition~\ref{def:piola} is precisely what yields \eqref{eq:piola_a}. In fact, 
\[
\int_{Q} \vec v \cdot \grad_{\vec x} q \dif\vec x
= \int_{\Qe} (\vec v \composed \Psi) \cdot \del{(\grad_{\xe}\Psi)^{-T} \composed \Psi} 
  \grad_{\xe} \qe \ J_{\xe} \dif \xe ,
\]
and using $\mat{P}^{-1} \composed \Psi = J_{\xe} (\del{\grad_{\xe} \Psi}^{-1} \composed \Psi)$ 
and Definition~\ref{def:piola}, we deduce 
\[
\int_{\Qe} (\vec v \composed \Psi) \cdot \del{(\grad_{\xe}\Psi)^{-T} \composed \Psi} \grad_\xe \qe J_\xe \dif \xe 
= \int_{\Qe} ((\matP^{-1} \vec v) \composed \Psi ) \cdot \grad_{\xe} \qe \dif \xe 
= \int_{\Qe} \ve \cdot \grad_{\xe} \qe \dif \xe . 
\]
Invoking the divergence theorem and \eqref{eq:piola_a} we obtain 
\begin{align*}
\int_{\bdy Q} q \vec v\cdot\vec\nu_{\vec x} \dif s_{\vec x} 
-\int_{Q} q \divg_{\vec x} \vec v  \dif\vec x  
&= \int_{Q} \vec v \cdot \grad_{\vec x} q \dif\vec x \\
&=\int_{\Qe} \vec\ve \cdot \grad_{\vec\xe} \qe \dif\vec\xe
= - \int_\Qe \qe \divg_{\xe} \ve \dif \xe 
+ \int_{\bdy \Qe} \qe \ve \cdot \vec \nu_{\xe} \dif s_{\xe} 
\end{align*}
Choosing $\qe \in H^1_0(\Qe)$, we obtain \eqref{eq:piola_b}, which 
in turn implies \eqref{eq:piola_c} and \eqref{eq:piola_identity}. 
\end{proof}

\begin{lemma}
[$L^r$-norm equivalence]
\label{lem:LrNorm_equiv}
Let $1 < r < \infty$ and $\ve$ be the Piola transform of $\vec v$. There exists constants $C'$ and $C''$ which depend only on the Lipschitz seminorms of $\Psi$ and $\Psi^{-1}$ such that
    \begin{align}
        \label{eq:bhs_piola_lr_estimate}
            C'\normLr{\ve}{\Qe} 
                \leq \normLr{\vec v}{Q}
                \leq C'' \normLr{\ve}{\Qe}.
    \end{align}
\end{lemma}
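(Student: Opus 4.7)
The plan is to combine a change of variables with the pointwise bound on $\matP$ (and on $\matP^{-1}$) coming directly from its definition.

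For the upper bound, I would start from $\vec x = \Psi(\xe)$, so that $d\vec x = J_\xe\,d\xe$, and use the Piola relation $\vec v\composed\Psi = \matP\ve$ to write
\[
\int_Q |\vec v|^r\,d\vec x
= \int_{\Qe} |\matP\ve|^r\, J_\xe \,d\xe
\le \int_{\Qe} \|\matP\|_2^r\, J_\xe\, |\ve|^r\,d\xe.
\]
Unpacking $\matP = J_\xe^{-1}\grad_{\xe}\Psi$ turns the prefactor into $\|\matP\|_2^r J_\xe = J_\xe^{1-r}\|\grad_{\xe}\Psi\|_2^r$. For $r\ge 1$ the exponent $1-r$ is nonpositive, so I need $J_\xe$ bounded below; but $J_\xe^{-1}=J_{\vec x}\composed\Psi$ by the inverse-function identity, so this is controlled by $\|\grad_{\vec x}\Psi^{-1}\|_\infty$, i.e.\ the Lipschitz seminorm of $\Psi^{-1}$. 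The other factor $\|\grad_{\xe}\Psi\|_\infty$ is the Lipschitz seminorm of $\Psi$. This produces the $C''$ bound.

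For the lower bound I would repeat the argument with the roles reversed. Changing variables $\xe = \Psi^{-1}(\vec x)$ gives $d\xe = J_{\vec x}\,d\vec x$, and using $\ve\composed\Psi^{-1} = \matP^{-1}\vec v$ yields
\[
\int_{\Qe}|\ve|^r\,d\xe
= \int_{Q}|\matP^{-1}\vec v|^r J_{\vec x}\,d\vec x
\le \int_{Q} J_{\vec x}^{1-r}\,\|\grad_{\vec x}\Psi^{-1}\|_2^r\,|\vec v|^r\,d\vec x,
\]
after the analogous manipulation $\matP^{-1} = J_{\vec x}^{-1}\grad_{\vec x}\Psi^{-1}$. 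Bounding $J_{\vec x}^{-1} = J_\xe\composed\Psi^{-1}$ in terms of $\|\grad_{\xe}\Psi\|_\infty$ and $\|\grad_{\vec x}\Psi^{-1}\|_\infty$ furnishes $1/C'$.

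There is no genuine obstacle here; the one thing to be careful about is the bookkeeping of Jacobian powers, since the naive estimate $\|\matP\|_2^r$ carries an extra $J_\xe^{-r}$ that only partially cancels the $J_\xe$ coming from the change of variables, leaving a $J_\xe^{1-r}$ that must be controlled. As long as $\Psi$ is a bi-Lipschitz homeomorphism — which is guaranteed by the local $W^2_s$-diffeomorphism hypothesis via Corollary~\ref{cor:omega_diffeomorphism} (or simply by Definition~\ref{def:loc_diff}) — both $J_\xe$ and $J_{\vec x}$ are uniformly bounded above and below by quantities depending only on the Lipschitz seminorms of $\Psi$ and $\Psi^{-1}$, and the proof closes.
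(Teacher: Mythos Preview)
Your proof is correct and follows essentially the same route as the paper's: bound $|\matP\ve|$ (resp.\ $|\matP^{-1}\vec v|$) pointwise by the $L^\infty$-norm of $\matP$ (resp.\ $\matP^{-1}$) and then change variables, with all resulting constants controlled by the Lipschitz seminorms of $\Psi$ and $\Psi^{-1}$. The paper's version is terser---it simply pulls out $\normLi{\matP}{}$ and $\normLi{J_\xe}{}$ without unpacking the cancellation $\|\matP\|^r J_\xe = J_\xe^{1-r}\|\grad_\xe\Psi\|^r$ that you track explicitly---but the substance is identical.
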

\begin{proof} 
This is a trivial consequence of Definition~\ref{def:piola} because 
\[
\normL{\ve}{r}\Qe 
= \normL{(\matP^{-1} \vec v)\composed \Psi}r{\Qe}
\leq \normL{\matP^{-1}\composed \Psi}\infty{\Qe}
\normLi{J_{\vec x}}{Q} \normLr{\vec v}{Q} ,
\]
and
\[
\normL{\vec v}{r}Q 
= \normL{\del{\matP \ve}\composed \Psi^{-1}}rQ
\leq  \normLi{\matP\composed \Psi^{-1}}{Q}  
\normLi{J_{\xe}}{\Qe} \normLr{\ve}{\Qe}.
\]
This concludes the proof.
\end{proof}

\lemref{lem:piola} discusses the transformation of the divergence operator from the reference domain $\Qe$ to the physical domain $Q$. In what follows, we need to transform the gradient operator as well.  

\begin{lemma}
[Piola gradient] 
\label{lem:bhs_piola_grad}
\begin{subequations}
Let $\ve$ and $\vec v$ be Piola transforms of each other. The gradient
operator admits the following decomposition
	\begin{align}
        \label{eq:bhs_piola_grad_b}
        \grad_{\xe}\ve
            &= J_{\xe} \mathfrak{P}_{\matP^{-1}} (\grad_{\vec x} \vec v \composed \Psi)
                + \grad_{\xe} (\matP^{-1} \composed \Psi) \odot \vec v \composed \Psi, \\
		\label{eq:bhs_piola_grad}
		\grad_{\vec x} \vec v \composed \Psi 
		    &= {J_{\xe}}^{-1}  \mathfrak{P}_{\matP}(\grad_{\xe} \ve )  
		        - {J_{\xe}}^{-1} \mathfrak{P}_{\matP} \del{\grad_{\xe}(\matP^{-1} \composed \Psi)\odot\matP \ve }  ,
	\end{align}
where $\mathfrak{P}$ is defined in \eqref{eq:frakP}. Moreover, for $s' \leq t^\bullet \leq s$ 
and $1/t^\bullet = 1/s + 1/t^\circ$ 
    \begin{equation}\begin{aligned}
        \label{eq:bhs_piola_grad_estimate}
            \normSZ{\ve}1{t^\bullet}{{\Qe}} 
                &\leq C'\del{\normSZ{\vec v}1{t^\bullet}{Q} 
                    + \normL{\vec v}{t^\circ}{U}}
            \\
            \normSZ{\vec v}1{t^\bullet}{Q} 
                &\leq C''\del{\normSZ{\ve}1{t^\bullet}{\Qe}
                    + \normL{\ve}{t^\circ}{\Ue}} , 
    \end{aligned}\end{equation}
where the constants $C'$ and $C''$ depend only on $n, r, s$, the Lipschitz and $W^2_s$ semi-norms of $\Psi$ and $\Psi^{-1}$, 
and on the sets $\Ue = \supp\of{\calI - \Psi}$ and $U = \Psi\of{\Ue}$.
\end{subequations}
\end{lemma}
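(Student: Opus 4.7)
The plan has two parts: first establish the pointwise identities \eqref{eq:bhs_piola_grad_b} and \eqref{eq:bhs_piola_grad}, and then deduce the Sobolev estimates \eqref{eq:bhs_piola_grad_estimate} via change of variables and H\"older's inequality. For the identities I would start from $\ve = (\matP^{-1}\composed\Psi)(\vec v\composed\Psi)$ and apply the product and chain rules, using that $\grad_\xe(\vec v\composed\Psi) = (\grad_{\vec x}\vec v\composed\Psi)\grad_\xe\Psi = J_\xe(\grad_{\vec x}\vec v\composed\Psi)\matP$ since $\grad_\xe\Psi = J_\xe\,\matP$. The middle factor pre-multiplied by $\matP^{-1}\composed\Psi$ produces the conjugation $\matP^{-1}\matM\matP = \mathfrak{P}_{\matP^{-1}}(\matM)$ with $\matM = \grad_{\vec x}\vec v\composed\Psi$, thanks to the identity $\matP^{-1}\composed\Psi = J_\xe(\grad_\xe\Psi)^{-1}$ recalled in Definition~\ref{def:piola}; combining the differentiated terms yields \eqref{eq:bhs_piola_grad_b}. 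Identity \eqref{eq:bhs_piola_grad} then follows by applying $\mathfrak{P}_\matP$ to both sides, since $\mathfrak{P}_\matP\composed\mathfrak{P}_{\matP^{-1}}$ acts as the identity on matrices, and substituting $\vec v\composed\Psi = \matP\ve$ in the remainder term.

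For the norm estimates I would first record the regularity of the building blocks. Because $\Psi$ is a local $W^2_s$-diffeomorphism with $s>n$, Sobolev embedding gives $\grad\Psi\in C^{0,1-n/s}$, and Corollary~\ref{cor:omega_diffeomorphism} supplies $J_\xe, J_\xe^{-1}\in L^\infty$; consequently $\matP$ and $\matP^{-1}\composed\Psi$ lie in $L^\infty(\Qe)$, and symmetrically $\matP, \matP^{-1}$ are in $L^\infty(Q)$. Differentiating $\matP^{-1} = J_{\vec x}^{-1}\grad_{\vec x}\Psi^{-1}$ shows $\matP^{-1}\in W^1_s(Q)$, and composition with the Lipschitz map $\Psi$ preserves $L^s$-integrability, so $\grad_\xe(\matP^{-1}\composed\Psi)\in L^s(\Qe)$. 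By property (P\ref{P3}), $\Psi$ is the identity outside $\Ue$, whence $\matP^{-1}\composed\Psi = \matI$ there and $\grad_\xe(\matP^{-1}\composed\Psi)$ is supported in $\Ue$. Taking the $L^{t^\bullet}$-norm of \eqref{eq:bhs_piola_grad_b}, the first term is bounded by $\|\grad_{\vec x}\vec v\composed\Psi\|_{L^{t^\bullet}(\Qe)}$, which is equivalent to $\|\grad_{\vec x}\vec v\|_{L^{t^\bullet}(Q)}$ by Lemma~\ref{lem:LrNorm_equiv} applied componentwise. The second term I would treat by H\"older's inequality on $\Ue$ with the splitting $1/t^\bullet = 1/s + 1/t^\circ$,
\begin{equation*}
\|\grad_\xe(\matP^{-1}\composed\Psi)\odot(\vec v\composed\Psi)\|_{L^{t^\bullet}(\Ue)}
 \le \|\grad_\xe(\matP^{-1}\composed\Psi)\|_{L^s(\Ue)}\,\|\vec v\composed\Psi\|_{L^{t^\circ}(\Ue)},
\end{equation*}
and a final change of variables turns the last factor into $\|\vec v\|_{L^{t^\circ}(U)}$. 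This yields the first inequality in \eqref{eq:bhs_piola_grad_estimate}; the second follows by an identical argument starting from \eqref{eq:bhs_piola_grad}, with the roles of $\Psi\leftrightarrow\Psi^{-1}$, $\ve\leftrightarrow\vec v$, and $\Ue\leftrightarrow U$ interchanged.

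The main obstacle is keeping track of admissibility of the H\"older exponents and of the regularity used for the second factor. The range $s'\le t^\bullet\le s$ translates to $1/t^\circ = 1/t^\bullet - 1/s \in [0,\,(s-2)/s]$, hence $t^\circ\in[s/(s-2),\infty]$; since $s>n\ge 2$ this range is nonempty, so the H\"older step closes with constants depending only on $n,r,s$ and on the Lipschitz and $W^2_s$ seminorms of $\Psi, \Psi^{-1}$. The $L^s$-bound on $\grad_\xe(\matP^{-1}\composed\Psi)$ is where the fractional regularity gain established in Section~\ref{s:loc_diff} is decisive: if $\Psi$ were merely $W^{2-1/s}_s$, differentiating $\matP^{-1}$ would produce a factor with only fractional integrability below $L^s$, which would be insufficient to serve as the $L^s$ partner in the H\"older product above.
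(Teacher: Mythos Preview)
Your proposal is correct and follows essentially the same approach as the paper: differentiate $\ve = (\matP^{-1}\composed\Psi)(\vec v\composed\Psi)$ via the product and chain rules to obtain \eqref{eq:bhs_piola_grad_b}, then conjugate by $\matP$ (i.e., apply $\mathfrak{P}_\matP$) and substitute $\vec v\composed\Psi = \matP\ve$ to get \eqref{eq:bhs_piola_grad}; for the estimates, use that $\Psi = \calI$ outside $\Ue$ so the second term is supported there, and combine $\Psi\in W^2_s$ with H\"older at exponents $1/t^\bullet = 1/s + 1/t^\circ$. The paper's proof is terser and splits $\Qe$ into $\Qe\setminus\Ue$ (where $\ve=\vec v$ trivially) and $\Ue$, whereas you handle the whole of $\Qe$ at once using the $L^\infty$ bounds on $J_\xe$, $\matP$, $\matP^{-1}\composed\Psi$; both routes are equivalent.
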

\begin{proof} 
To obtain \eqref{eq:bhs_piola_grad_b} it suffices to differentiate the Piola transform given in \eqref{eq:bhs_piola_of_v} and note that we use the chain rule to deal with the first term $\grad_{\vec x} \vec v$ but not 
the second one $\grad_{\xe}(\matP^{-1} \composed \Psi)$. 

To derive \eqref{eq:bhs_piola_grad} we multiply by $\matP$ on the left and $\matP^{-1}$ on the right of \eqref{eq:bhs_piola_grad_b} and then reorder terms 
\begin{align*}
\matP \del{ \grad_{\xe} \ve 
-  \grad_{\xe} (\matP^{-1} \composed \Psi) \odot \vec v \composed \Psi  }\matP^{-1}
= J_{\xe} \grad_{\vec x} \vec v \composed \Psi ,
\end{align*}
whence, upon using $\vec v \composed \Psi = \matP \ve$ from \eqref{eq:bhs_piola_of_v}, we achieve the desired
expression. 

To show \eqref{eq:bhs_piola_grad_estimate} we observe that $\Psi$ is the identity on 
$\Qe \setminus \Ue$, whence 
$
\normSZ{\ve}1{t^\bullet}{\Qe \setminus \Ue} = \normSZ{\vec v}{1}{t^\bullet}{Q \setminus U} . 
$
To deal with the remaining part on $\Ue$, we resort to \eqref{eq:bhs_piola_grad_b} and $\Psi \in \sob2s\Ue$. 
Combining these results yields the first estimate of \eqref{eq:bhs_piola_grad_estimate}. To obtain the second one, it 
suffices to follow the same steps above starting with \eqref{eq:bhs_piola_grad}.
\end{proof}

\begin{proposition}
[Piola symmetric gradient]
\label{prop:piola_sym_grad_decomposition}
\begin{subequations} \label{eq:piola_sym_grad_decomposition}
Let $\ve$ be the Piola transform of $\vec v$. The symmetric gradient admits the following decomposition 
    \begin{equation}
       \gradS{\vec v}\composed \Psi 
           = {J_{\xe}}^{-1} \del{
            \vec{\varepsilon}_{\matP}(\ve) - \vec \vartheta_{\matP}\of{\ve}}, 
    \end{equation}
with
    \begin{align} \label{eq:hat_varTheta} 
    \begin{aligned} 
        \vec{\varepsilon}_{\matP}(\ve) &:= \mathfrak{P}_{\matP}(\vec\varepsilon(\ve)) , \\ 
         \vec\vartheta_{\matP}\of{\ve}
             &:= \frac{1}{2}\mathfrak{P}_{\vec P} \del{
                 \grad_{\xe}(\matP^{-1} \composed \Psi) \odot \matP \ve 
                 + \del{\grad_{\xe}(\matP^{-1} \composed \Psi) \odot \matP \ve }\transpose } .
    \end{aligned}                 
    \end{align}
Moreover, if $\Psi$ is a local $W^2_s$-diffeomorphism, $s > n$,
and $t^\circ, t^\bullet$ satisfy  
  $s' \leq t^\circ \leq \infty$, $1 \le t^\bullet \le s$, and
  $1/t^\bullet = 1/s + 1/t^\circ$, then there holds 
    \begin{equation}\begin{aligned}
        \label{eq:piola_sym_grad_bound}
          \normL{ \vec{\varepsilon}_{\matP}(\ve) }{t^\circ}{\Qe}
               &\leq C' \normSZ{\ve}1{t^\circ}{\Qe} \\
           \normL{\vec\vartheta_{\matP}\of{\ve}}{t^{\bullet}}{\Qe}
               &\leq C'' \normL{\ve}{t^\circ}{\Ue},
    \end{aligned}\end{equation}
with constants $C'$ and $C''$ depending only on $n, r, s, t^\circ$, the Lipschitz and $W^2_s$ semi-norms of $\Psi$ and $\Psi^{-1}$, and on the sets $\Ue = \supp\of{\calI - \Psi}$ and $U= \Psi\of{\Ue}$.

\end{subequations}
\end{proposition}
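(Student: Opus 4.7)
The plan is to derive the decomposition \eqref{eq:piola_sym_grad_decomposition} directly from the Piola gradient identity \eqref{eq:bhs_piola_grad} established in Lemma~\ref{lem:bhs_piola_grad}. Starting from
\[
\grad_{\vec x} \vec v \composed \Psi = J_\xe^{-1}\mathfrak{P}_\matP(\grad_\xe \ve) - J_\xe^{-1}\mathfrak{P}_\matP\bigl(\grad_\xe(\matP^{-1}\composed\Psi)\odot\matP\ve\bigr),
\]
I would symmetrize both sides, i.e.\ apply the operation $A\mapsto \tfrac12(A+A^\transpose)$. The first piece becomes $\vec\varepsilon_\matP(\ve) = \mathfrak{P}_\matP(\vec\varepsilon(\ve))$ after collecting the $\grad_\xe\ve$ contributions, while the second piece, which has no $\grad_\xe\ve$ inside, symmetrizes to $\vec\vartheta_\matP(\ve)$ as defined in \eqref{eq:hat_varTheta}. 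A key structural remark to record at this stage is that $\Psi = \calI$ outside $\Ue$ by construction, so $\matP^{-1}\composed\Psi = \matI$ there, and hence the factor $\grad_\xe(\matP^{-1}\composed\Psi)$ in $\vec\vartheta_\matP(\ve)$ is supported in $\Ue$.

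For the $L^{t^\circ}$ bound on $\vec\varepsilon_\matP(\ve)$, I would first establish uniform $L^\infty$ bounds on $\matP$, $\matP^{-1}$, $J_\xe$, and $J_\xe^{-1}$. Since $\Psi$ is a local $W^2_s$-diffeomorphism with $s>n$, Sobolev embedding $\sob2s\Qe\hookrightarrow W^1_\infty(\Qe)$ gives $\grad_\xe\Psi, \grad_{\vec x}\Psi^{-1}\in L^\infty$, and Corollary~\ref{cor:omega_diffeomorphism} (picking $\delta$ small enough) guarantees $|J_\xe|\ge 1/2$, whence $J_\xe^{-1}\in L^\infty$. Consequently $\matP,\matP^{-1}\in L^\infty$, and direct estimation gives
\[
\normL{\vec\varepsilon_\matP(\ve)}{t^\circ}\Qe \le \normLi{\matP}\Qe\normLi{\matP^{-1}}\Qe\normL{\vec\varepsilon(\ve)}{t^\circ}\Qe \lesssim \normSZ{\ve}{1}{t^\circ}\Qe,
\]
which is the first inequality in \eqref{eq:piola_sym_grad_bound}.

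For the bound on $\vec\vartheta_\matP(\ve)$, the critical point is that $\grad_\xe(\matP^{-1}\composed\Psi)$ is merely $L^s$ because its computation via the chain rule produces a product of $(\grad_{\vec x}\matP^{-1})\composed\Psi$, which involves $D^2\Psi^{-1}\in L^s$, with $\grad_\xe\Psi\in L^\infty$. Since this factor vanishes outside $\Ue$, and since $\matP\in L^\infty$, Hölder's inequality with $1/t^\bullet = 1/s + 1/t^\circ$ gives
\[
\normL{\vec\vartheta_\matP(\ve)}{t^\bullet}\Qe \lesssim \normLi{\matP}\Qe^{2}\normLi{\matP^{-1}}\Qe\,\normL{\grad_\xe(\matP^{-1}\composed\Psi)}{s}\Ue\,\normL{\ve}{t^\circ}\Ue \lesssim \normL{\ve}{t^\circ}\Ue,
\]
where the implicit constant absorbs the $W^2_s$ seminorms of $\Psi,\Psi^{-1}$, which are finite by Corollary~\ref{cor:omega_diffeomorphism} and Lemma~\ref{lem:tcalE}. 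The range $s'\le t^\circ\le\infty$ together with $1\le t^\bullet\le s$ is exactly what is needed for Hölder to be applicable.

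The main technical obstacle is the correct bookkeeping of the commutators produced by symmetrization of $\mathfrak{P}_\matP$, since $\matP$ is neither symmetric nor orthogonal, so $\mathfrak{P}_\matP(A)^\transpose \neq \mathfrak{P}_\matP(A^\transpose)$ in general. The discrepancy, however, only produces additional terms of the same lower-order structure as $\vec\vartheta_\matP$ (quadratic in $\matP,\matP^{-1}$ and linear in $\ve$, with no $\grad\ve$), so they can be absorbed into $\vec\vartheta_\matP$ without changing the bound $\|\cdot\|_{L^{t^\bullet}(\Qe)}\lesssim\|\ve\|_{L^{t^\circ}(\Ue)}$. Tracking these terms and verifying that the chain-rule expansion of $\grad_\xe(\matP^{-1}\composed\Psi)$ produces only quantities controlled by the $W^2_s$ and Lipschitz seminorms of $\Psi,\Psi^{-1}$ is the sole nonroutine step of the argument.
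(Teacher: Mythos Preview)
Your approach mirrors the paper's own one-line proof: symmetrize the Piola gradient identity \eqref{eq:bhs_piola_grad} of Lemma~\ref{lem:bhs_piola_grad} and inherit the bounds from \eqref{eq:bhs_piola_grad_estimate}. Your derivation of the two estimates in \eqref{eq:piola_sym_grad_bound} via H\"older and the $L^\infty$ control on $\matP, \matP^{-1}$ is correct and more explicit than what the paper writes.

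There is, however, a gap in your handling of the algebraic identity. You correctly observe that $\mathfrak{P}_\matP(A)\transpose = \matP\transposeInv A\transpose \matP\transpose \ne \mathfrak{P}_\matP(A\transpose)$, so symmetrizing $J_\xe^{-1}\mathfrak{P}_\matP(\grad_\xe\ve)$ does not yield $J_\xe^{-1}\mathfrak{P}_\matP(\vec\varepsilon(\ve))$ exactly. But your claim that the discrepancy is ``of the same lower-order structure as $\vec\vartheta_\matP$ \ldots with no $\grad\ve$'' is incorrect: the extra term is
\[
\tfrac{1}{2}\bigl(\matP\transposeInv(\grad_\xe\ve)\transpose\matP\transpose - \matP(\grad_\xe\ve)\transpose\matP^{-1}\bigr),
\]
which manifestly contains $\grad_\xe\ve$ and therefore cannot obey an estimate of the form $\normL{\cdot}{t^\bullet}{\Qe} \lesssim \normL{\ve}{t^\circ}{\Ue}$. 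The correct home for this commutator is with $\vec\varepsilon_\matP$, not $\vec\vartheta_\matP$: since $\matP,\matP^{-1}\in L^\infty$, it is bounded by $\normSZ{\ve}{1}{t^\circ}{\Qe}$, which is exactly the first estimate in \eqref{eq:piola_sym_grad_bound}. Thus either $\vec\varepsilon_\matP$ in \eqref{eq:hat_varTheta} should be read as the full symmetrization $\tfrac12\bigl(\mathfrak{P}_\matP(\grad_\xe\ve) + \mathfrak{P}_\matP(\grad_\xe\ve)\transpose\bigr)$, or one carries an additional top-order term satisfying the same $L^{t^\circ}$ bound; either way the subsequent uses in \S\ref{s:sobolev_case} go through unchanged. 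The paper's own proof is silent on this point.
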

\begin{proof} The decomposition follows directly by the definition of the symmetric gradient and \eqref{eq:bhs_piola_grad}. The bounds follow from the bounds in \lemref{lem:bhs_piola_grad}.
\end{proof}

\begin{theorem}
[space isomorphism]
\label{thm:bhs_isomorphic_domains}
Let $s' \leq r \leq s$ and $\Psi$ be a local $W^2_s$-diffeormorphism between $\Qe$ and $Q$. The linear operator 
\begin{equation}\label{eq:cP}
\fullfunction{\calP}{X_{r}\of{\Qe}}{X_{r}\of{Q}}
{\of{\ve, \qe}}{\of{\vec v, q} = \del{\matP \ve, \qe}\composed \Psi^{-1}}
\end{equation}
is an isomorphism.
\end{theorem}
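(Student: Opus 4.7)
The plan is to prove that $\calP$ is a bounded linear bijection by exhibiting its inverse as $\calP^{-1}(\vec v, q) := (\matP^{-1}\vec v, q)\composed\Psi$. Because the hypotheses on $\Psi$ and $\Psi^{-1}$ are symmetric (both are local $W^2_s$-diffeomorphisms with $s>n$), it suffices to verify continuity $\calP : X_r(\Qe) \to X_r(Q)$; the corresponding bound for $\calP^{-1}$ follows by the identical argument with $\Psi$ and $\Psi^{-1}$ swapped, and $\calP\calP^{-1} = \calP^{-1}\calP = \calI$ is an algebraic consequence of Definition~\ref{def:piola}.

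Fix $(\ve,\qe) \in X_r(\Qe)$ and set $(\vec v, q) := \calP(\ve,\qe)$. For the pressure, $q = \qe\composed\Psi^{-1}$ lies in the pressure component of $X_r(Q)$ with comparable norm via the usual change of variables, since the Jacobians $J_{\xe}$ and $J_{\vec x}$ are bounded above and away from zero using the embedding $W^2_s \hookrightarrow C^1$ valid for $s>n$; any zero-mean requirement descends by linearity. For the velocity, Lemma~\ref{lem:LrNorm_equiv} delivers $\normLr{\vec v}{Q} \lesssim \normLr{\ve}{\Qe}$, while the second inequality in~\eqref{eq:bhs_piola_grad_estimate} with $t^\bullet = r$ and $1/t^\circ = 1/r - 1/s$ yields
\[
\normS{\vec v}{1}{r}{Q} \lesssim \normS{\ve}{1}{r}{\Qe} + \normL{\ve}{t^\circ}{\Ue}.
\]
The remainder on the bounded set $\Ue$ is absorbed into $\normS{\ve}{1}{r}{\Qe}$ via the Sobolev embedding $\sob{1}{r}{\Ue} \hookrightarrow L^{t^\circ}(\Ue)$; this is precisely where the hypothesis $s>n$ enters decisively, as it guarantees $t^\circ < r^* = nr/(n-r)$ when $r<n$, while any finite $t^\circ$ is admissible when $r\ge n$.

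The boundary condition $\vec v\cdot\vnu_{\vec x} = 0$ on $\bdy Q$ is immediate from the pointwise trace identity~\eqref{eq:piola_identity}, and compatibility with the quotient by $Z(Q)$ (where present) is automatic since $\calP$ is linear. The only genuine subtlety is the Sobolev embedding used to close the estimate~\eqref{eq:bhs_piola_grad_estimate} back into $\sob{1}{r}{}$, which reflects the sharp regularity threshold $s>n$ governing the entire paper; every remaining step reduces either to an algebraic identity from Lemma~\ref{lem:piola} or to a direct change of variables.
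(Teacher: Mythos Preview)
Your approach mirrors the paper's: you correctly invoke \eqref{eq:piola_identity} for the boundary condition, \eqref{eq:bhs_piola_grad_estimate} with $t^\bullet=r$ for the gradient estimate, the change of variables for the pressure, and the Sobolev embedding on the bounded set $\Ue$ to absorb the lower-order remainder. The overall structure is right.

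There is, however, a genuine gap in how you close the estimate when $\Qe$ is unbounded. You write that the remainder $\normL{\ve}{t^\circ}{\Ue}$ is absorbed into $\normS{\ve}{1}{r}{\Qe}$ via Sobolev embedding. But when $\Qe=\halfspace$, the norm on $V_r(\Qe)=\sobHZ{1}{r}{\halfspace}$ is only the \emph{seminorm} $\normSZ{\ve}{1}{r}{\halfspace}$ (see \eqref{eq:banach_space_X_snorm}); the full $W^1_r$ norm may well be infinite since elements of homogeneous Sobolev spaces need not lie in $L^r$. The Sobolev embedding on $\Ue$ gives $\normL{\ve}{t^\circ}{\Ue}\lesssim\normS{\ve}{1}{r}{\Ue}$, but to pass from this full local norm back to the seminorm you need an additional ingredient: a Poincar\'e/Korn-type inequality on $\Ue$ exploiting the boundary condition $\ve\cdot\vnu=0$ on the flat piece $\bdy\halfspace\cap\bdy\Ue$. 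The paper handles this via a case split, invoking Remark~\ref{rem:norm_equiv} (i.e.\ Lemma~\ref{lem:equiv_norm} with the boundary condition imposed only on part of $\bdy\Ue$) to obtain $\normS{\ve}{1}{r}{\Ue}\lesssim\normSZ{\ve}{1}{r}{\Ue}$ in the half-space case, and noting $\calP=\calI$ when $\Qe=\bbR^n$. Without this step your chain of inequalities does not close in the unbounded setting.

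A minor secondary point: saying that compatibility with the quotient by $Z(Q)$ is ``automatic since $\calP$ is linear'' is not an argument; linearity alone does not guarantee that $\calP$ descends to the quotient. The paper sidesteps this by working with the seminorm $\normSZ{\cdot}{1}{r}{Q}$ throughout, which vanishes on $Z(Q)$ and is therefore well-defined on the quotient by Lemma~\ref{lem:equiv_norm}.
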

\begin{proof}
We only show that $\calP$ is bounded because the same procedure applies to $\calP^{-1}$.
Consequently,
given $(\ve, \qe) \in X_r(\Qe)$, we will show that $(\vec v,q) \in X_r(Q)$ and the norm 
\[
\| (\vec v, q) \|_{X_r(Q)} 
= \| \calP(\ve , \qe) \|_{X_r(Q)} 
= \| \vec v \|_{V_r(Q)} + \| q \|_{L^r(Q)} 
\]
is bounded. We first observe that $\ve \cdot \vec \nu_{\xe} = 0$ implies $\vec v \cdot \vec\nu_{\vec x} = 0$ because of \eqref{eq:piola_identity}. In view of \lemref{lem:equiv_norm} and \eqref{eq:banach_space_X_snorm}, $V_r(Q)$ is complete under the semi-norm $\normSZ{\cdot}1r{Q}$. Owing to \eqref{eq:bhs_piola_grad_estimate}, for $r = t^{\bullet}$, we obtain
\begin{align*}
 \normSZ{\vec v}1{r}{Q} 
\lesssim \normSZ{\ve}1{r}{\Qe} + \normL{\ve}{t^\circ}{\Ue} ,
\end{align*}
and see that the first term is bounded because $\ve \in V_r(\Qe)$. To bound the second term, we use the Sobolev embedding theorem $\sob1r\Ue \subset L^{t^\circ}(\Ue)$ with
$1/t^\circ = 1/r - 1/s$ to arrive at  $\normL{\ve}{t^\circ}{\Ue} \lesssim \normS{\ve}1r\Ue$. 

If $\Qe$ is bounded then \lemref{lem:equiv_norm} implies
$
\normS{\ve}1r\Ue \lesssim \normS{\vec v}1r\Qe 
\lesssim \normSZ{\ve}1r\Qe . 
$ 
If $Q = \bbR^n$, then $\calP = \calI$ and there is nothing to prove. 
If $Q = \bbR^n_{-}$ then by Remark~\ref{rem:norm_equiv} we obtain 
$
\normS{\ve}1r\Ue \le \normSZ{\ve}1r\Ue .
$ 
Altogether, we conclude that $\vec v \in V_r(Q)$. 

It remains to estimate $\norm{q}_{L^r(Q)}$. Due to the change of
variables and the fact that $\Psi$ is $W^2_s$ with $s > n$,
we arrive at $\norm{q}_{L^r(Q)} \lesssim \normLi{J_{\xe}}{\Qe}\normLr{\qe}{\Qe}$. 
This concludes the proof. 
\end{proof}

\begin{remark}[$W^2_s$--regularity] \rm
The proof of \thmref{thm:bhs_isomorphic_domains} reveals that it is absolutely necessary for $\Psi$ to have two 
derivatives, i.e. $\Psi \in W^2_s$, for the Piola transform to make sense as an isomorphism between $X_{r}\of{\Qe}$ and $X_{r}\of{Q}$. This 
differs from the canonical use of the Piola transform for $H\of{\divg}$ spaces  which hinges on 
\eqref{eq:piola_identity}. 
\end{remark}


\section{The Sobolev Space Case}
\label{s:sobolev_case}
We start with a brief summary of \emph{index theory} and related results following \cite{PDLax_2002}. 
Let $X$, $Y$ and $Z$ be arbitrary Banach spaces with $\dual{X}$, $\dual{Y}$ and $\dual{Z}$ being their duals.

A (bounded) linear operator $\calA : X \to Y$ is said to have \emph{finite index} if it has the following properties:
    \begin{enumerate}[(i)]
        \item The nullspace $N_\calA$ of $\calA$ is a finite dimensional subspace of $X$.
        \item The quotient space $Y/R_\calA$ is finite dimensional, with $R_\calA$ the range of $\calA$.
    \end{enumerate}
For such an operator we define the \emph{index} as 
    \[
        \ind \calA := \dim N_\calA - \dim Y/R_\calA.
    \]
Two bounded linear operators $\calA : X \to Y$ and $\calA^\dagger: Y \to X$ are called \emph{pseudoinverses} of each other if 
    \[
        \calA \calA^\dagger = \calI_Y + \calK, \quad \calA^\dagger \calA = \calI_X + \calC,
    \]
where $\calK : Y \rightarrow Y$ and $\calC : X \rightarrow X$ are \emph{compact} operators. 
Every bounded linear operator $\calA : X \to Y$ has a dual (operator) $\dual\calA : \dual Y \to \dual X$ given by the relation, 
    \[
        \pair{\dual\calA y^*, x}_{\dual X, X} := \pair{y^*, \calA x}_{\dual Y, Y}, \quad  x \in X, \, y^* \in \dual Y.
    \]

\begin{lemma} 
[index vs pseudoinverse {\cite[Chapter 27: Theorems 1,2]{PDLax_2002}}]
\label{lem:lax_c}
    A bounded linear operator $\calA : X \to Y$ has finite index if and only if $\calA$ has a pseudoinverse. Moreover, 
    \[
        \ind A = -\ind A^\dagger.
    \]
\end{lemma}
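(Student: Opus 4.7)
The plan is to prove the two implications separately and then derive the index identity as a consequence of the symmetry of the pseudoinverse relation together with the multiplicativity of the index.

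For the forward direction, assume $\calA$ has finite index. Since $N_\calA$ is finite dimensional, it admits a closed topological complement $X_1 \subset X$, so $X = N_\calA \oplus X_1$. The hypothesis that $Y/R_\calA$ is finite dimensional implies that $R_\calA$ is closed (a standard fact: a subspace of finite codimension that is the range of a bounded operator is closed; this can be seen by lifting a complementary basis into $Y$ and noting the resulting sum map is onto with closed range by the open mapping theorem). Pick a finite-dimensional complement $Y_1$ with $Y = R_\calA \oplus Y_1$. Then $\calA\big|_{X_1} : X_1 \to R_\calA$ is a continuous bijection, whose inverse $B : R_\calA \to X_1$ is bounded by the open mapping theorem. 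Define
\[
  \calA^\dagger : Y \to X, \qquad \calA^\dagger(r + y_1) := B r, \quad r \in R_\calA,\ y_1 \in Y_1.
\]
Then $\calA \calA^\dagger = \calI_Y - P_{Y_1}$ and $\calA^\dagger \calA = \calI_X - P_{N_\calA}$, where $P_{Y_1}$ and $P_{N_\calA}$ are finite rank, hence compact. So $\calA^\dagger$ is the required pseudoinverse.

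For the reverse direction, suppose a pseudoinverse $\calA^\dagger$ exists. Any $x \in N_\calA$ satisfies $0 = \calA^\dagger \calA x = x + \calC x$, i.e. $x \in N_{\calI_X + \calC}$. By the Riesz--Schauder theory for compact operators, $N_{\calI_X + \calC}$ is finite dimensional, and so therefore is $N_\calA$. Dually, $R_\calA \supseteq R_{\calA \calA^\dagger} = R_{\calI_Y + \calK}$, and Riesz--Schauder guarantees that $R_{\calI_Y + \calK}$ has finite codimension in $Y$; hence so does $R_\calA$. This proves that $\calA$ has finite index.

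For the index identity, observe that the defining relations for a pseudoinverse are symmetric in $\calA$ and $\calA^\dagger$, so $\calA$ is itself a pseudoinverse of $\calA^\dagger$. By the implication just proved, $\calA^\dagger$ also has finite index, so the quantity $\ind \calA^\dagger$ is well defined. I would then invoke the multiplicativity of the index on finite-index (Fredholm) operators,
\[
  \ind(\calA \calA^\dagger) = \ind \calA + \ind \calA^\dagger,
\]
and the fact that $\calA \calA^\dagger = \calI_Y + \calK$ with $\calK$ compact is a Fredholm operator of index zero (by Riesz--Schauder), giving $\ind \calA + \ind \calA^\dagger = 0$, as desired.

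The main obstacle is really the clean invocation of two background facts: that a bounded operator with finite codimensional range automatically has closed range (needed to construct $\calA^\dagger$), and that the index is additive under composition of Fredholm operators (needed for the index formula). Both are classical but require care; once they are on the table, the construction of $\calA^\dagger$ via a topological splitting $X = N_\calA \oplus X_1$, $Y = R_\calA \oplus Y_1$ makes the rest essentially bookkeeping.
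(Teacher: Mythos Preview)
The paper does not actually prove this lemma; it simply cites \cite[Chapter 27: Theorems 1,2]{PDLax_2002} and states the result without argument. Your proof is correct and follows the standard route found in Lax's text: split $X$ and $Y$ along $N_\calA$ and $R_\calA$ to build $\calA^\dagger$ in the forward direction, use Riesz--Schauder on $\calI+\calC$ and $\calI+\calK$ in the reverse direction, and invoke additivity of the index for the formula. The two background facts you flag (closedness of a finite-codimensional range of a bounded operator, and logarithmic additivity of the index) are exactly the ingredients Lax develops in that chapter, so there is no real gap---just be sure to cite them explicitly rather than leave them as parenthetical remarks.
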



\begin{lemma}[compact perturbation {\cite[Chapter 21: Theorem 3]{PDLax_2002}}]
\label{lem:lax_d}
    Suppose that $\calA : X \to Y$ has finite index, and $\calK: X \to Y$ is a compact linear map. Then $\calA + \calK$ has finite index and 
    \[
        \ind\of{\calA + \calK} = \ind \calA .
    \]
\end{lemma}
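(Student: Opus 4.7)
The plan is to reduce everything to the characterization in Lemma~\ref{lem:lax_c}: an operator has finite index if and only if it admits a pseudoinverse, together with the accompanying index formula $\ind \calA = -\ind \calA^\dagger$. The key observation will be that the \emph{same} pseudoinverse $\calA^\dagger$ works for both $\calA$ and $\calA+\calK$, which in one stroke yields both finite index and the equality of indices.

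First I would invoke Lemma~\ref{lem:lax_c} to produce a bounded linear operator $\calA^\dagger : Y \to X$ such that
\begin{equation*}
    \calA \calA^\dagger = \calI_Y + \calK_1, \qquad \calA^\dagger \calA = \calI_X + \calC_1,
\end{equation*}
with $\calK_1$ and $\calC_1$ compact. I would then compute directly
\begin{equation*}
    (\calA+\calK)\calA^\dagger = \calI_Y + \calK_1 + \calK \calA^\dagger,
    \qquad
    \calA^\dagger (\calA+\calK) = \calI_X + \calC_1 + \calA^\dagger \calK.
\end{equation*}
The routine but essential fact here is that the composition of a compact operator with a bounded operator is compact; applied to $\calK\calA^\dagger$ and $\calA^\dagger\calK$, and combined with the stability of compactness under finite sums, this identifies $\calA^\dagger$ as a pseudoinverse of $\calA+\calK$. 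Applying Lemma~\ref{lem:lax_c} in the reverse direction, $\calA+\calK$ has finite index.

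For the index equality, I would use the second assertion of Lemma~\ref{lem:lax_c}: since $\calA^\dagger$ is a pseudoinverse of both $\calA$ and $\calA+\calK$, it follows that
\begin{equation*}
    \ind \calA = -\ind \calA^\dagger = \ind(\calA+\calK).
\end{equation*}
The only delicate point — and the one that requires care in writing up — is verifying that Lemma~\ref{lem:lax_c} gives the index formula symmetrically, i.e.\ that both operators sharing a common pseudoinverse must share an index. This is immediate from the cited theorem once the pseudoinverse relation has been established, so I expect no substantial obstacle; the proof reduces to a short computation plus two applications of the preceding lemma.
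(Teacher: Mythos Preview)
Your argument is correct. Note, however, that the paper does not supply its own proof of this lemma: it is quoted verbatim from Lax's book \cite[Chapter 21: Theorem 3]{PDLax_2002} and simply invoked as a black box, so there is no ``paper's proof'' to compare against. Your route via Lemma~\ref{lem:lax_c} is the natural one within the paper's framework and goes through cleanly: the only ingredients are that compact operators form a two-sided ideal in the bounded operators and that the index formula $\ind\calA = -\ind\calA^\dagger$ in Lemma~\ref{lem:lax_c} depends only on the pseudoinverse relation, so a shared pseudoinverse forces equal indices.
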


\begin{lemma}[index of dual {\cite[Chapter 27: Theorem 4]{PDLax_2002}}]
\label{lem:lax_b} 
    Let $\calA: X \to Y$ be a bounded linear operator.
    If $\calA$ has finite index, then so does its dual $\dual{\calA}$. Moreover, 
    \[
        \ind \dual{\calA} = -\ind \calA
        .
    \]
\end{lemma}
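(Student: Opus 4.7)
The plan is to deduce the identity $\ind\dual\calA = -\ind\calA$ from the classical annihilator identities relating the kernels and ranges of $\calA$ and $\dual\calA$, combined with the fact that $\calA$ having finite index forces $R_\calA$ to be closed. Once closed range is established, the remainder is purely linear algebra in finite dimensions, so the analytic content is concentrated in a single preliminary step.

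First I would show that $R_\calA$ is closed in $Y$. By the preceding pseudoinverse lemma, $\calA$ admits $\calA^\dagger$ with $\calA\calA^\dagger = \calI_Y + \calK$, $\calK$ compact. Standard Fredholm theory applied to $\calI_Y+\calK$ gives that $R_{\calI_Y+\calK}$ is closed and of finite codimension in $Y$; since $R_{\calI_Y+\calK}\subset R_\calA$ and by hypothesis $\dim Y/R_\calA < \infty$, we conclude that $R_\calA$ is itself closed, as a linear subspace of $Y$ containing a closed subspace of finite codimension is necessarily closed.

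Next I would invoke the annihilator identities $N_{\dual\calA} = R_\calA^{\perp}$, which is always valid, and $R_{\dual\calA} = N_\calA^{\perp}$, which uses closed range of $\calA$. Combining these with the elementary isomorphisms $R_\calA^{\perp} \cong (Y/R_\calA)^*$ and $X^*/N_\calA^{\perp} \cong (N_\calA)^*$, together with $\dim V^* = \dim V$ for finite-dimensional $V$, I obtain
\begin{equation*}
\dim N_{\dual\calA} = \dim(Y/R_\calA), \qquad \dim X^*/R_{\dual\calA} = \dim N_\calA.
\end{equation*}
In particular $\dual\calA$ has finite index, and subtracting gives $\ind\dual\calA = \dim(Y/R_\calA) - \dim N_\calA = -\ind\calA$, as claimed.

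The main obstacle is the closed range step, which is where genuine functional-analytic input (Schauder's theorem and the Fredholm alternative) really enters; everything else is algebraic. A tidy alternative that avoids arguing closedness explicitly is to dualize the pseudoinverse identities directly: Schauder's theorem makes $\calK^*$ and $\calC^*$ compact, so $(\calA^\dagger)^*$ is a pseudoinverse of $\dual\calA$, and the preceding lemma immediately yields that $\dual\calA$ has finite index. Pinning down the sign of the index, however, still requires the annihilator computation outlined above, so closed range cannot be entirely avoided.
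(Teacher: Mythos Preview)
Your argument is correct and is essentially the standard proof (closed range via the Fredholm alternative for $\calI_Y+\calK$, then the annihilator identities $N_{\dual\calA}=R_\calA^\perp$ and $R_{\dual\calA}=N_\calA^\perp$). Note, however, that the paper does not supply its own proof of this lemma at all: it is stated with a direct citation to \cite[Chapter~27, Theorem~4]{PDLax_2002} and used as a black box. What you have written is precisely the argument one finds in Lax, so there is nothing to compare beyond observing that you have filled in a proof the paper deliberately outsourced.
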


%

\begin{corollary}[invertibility]
\label{cor:h_r_s} 
Let $\calA: X \to Y$ be a bounded operator with a pseudoinverse. If $\calA$ and $\dual\calA$ are injective then they are bijective.
\end{corollary}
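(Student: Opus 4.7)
The plan is to combine Lemmas \ref{lem:lax_c} and \ref{lem:lax_b} to pin down the index of $\calA$ to be exactly zero, from which bijectivity of both $\calA$ and $\dual\calA$ follows immediately.

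First, since $\calA$ has a pseudoinverse, Lemma \ref{lem:lax_c} tells us that $\calA$ has finite index, so both $\dim N_\calA$ and $\dim Y/R_\calA$ are finite. Then Lemma \ref{lem:lax_b} ensures that $\dual\calA$ also has finite index, with
\[
\ind \dual\calA = -\ind \calA.
\]

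Next I would exploit the injectivity hypotheses. Injectivity of $\calA$ means $N_\calA = \{0\}$, hence
\[
\ind \calA = \dim N_\calA - \dim Y/R_\calA = -\dim Y/R_\calA \le 0.
\]
Similarly, injectivity of $\dual\calA$ means $N_{\dual\calA}=\{0\}$, so
\[
\ind \dual\calA = -\dim \dual X/R_{\dual\calA} \le 0,
\]
which via Lemma \ref{lem:lax_b} gives $\ind \calA \ge 0$. The two inequalities force $\ind \calA = 0$, and hence also $\ind \dual\calA = 0$.

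Finally, combining $\ind \calA = 0$ with $\dim N_\calA = 0$ yields $\dim Y/R_\calA = 0$, so $R_\calA = Y$ and $\calA$ is surjective; together with injectivity this gives bijectivity of $\calA$. The identical argument applied to $\dual\calA$ gives bijectivity of $\dual\calA$. There is essentially no obstacle here: the entire proof is a bookkeeping exercise on the index identity, and the only subtlety worth flagging is that one must invoke both Lemma \ref{lem:lax_c} (to get finiteness of the index) and Lemma \ref{lem:lax_b} (to transfer information between $\calA$ and $\dual\calA$) before reading off the conclusion.
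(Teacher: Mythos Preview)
Your proof is correct and follows essentially the same approach as the paper: use \lemref{lem:lax_c} to get finite index, then combine $\dim N_\calA = \dim N_{\dual\calA} = 0$ with the index relation from \lemref{lem:lax_b} to force $\dim Y/R_\calA = \dim \dual X/R_{\dual\calA} = 0$. The paper arrives at this in one line by writing $-\dim \dual X / R_{\dual\calA} = \dim Y/R_\calA$ and noting both sides are nonnegative, whereas you split it into two inequalities on $\ind\calA$, but the content is identical.
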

\begin{proof} From \lemref{lem:lax_c} we have that $\calA$ has finite index. Since $\calA$ and $\dual\calA$ are injective, $\dim N_\calA = \dim N_{\dual\calA} = 0$. According to \lemref{lem:lax_b} we have, $ - \dim \dual X / R_{\dual\calA} = \dim Y/ R_{\calA}$. Since the dimension of a space is not negative, we obtain
    \[ 
        \dim \dual X / R_{\dual\calA} = \dim Y/R_{\calA} = 0,
    \] 
i.e. $\calA$ and $\dual\calA$ are surjective which concludes our proof.
\end{proof}

Our strategy is to use  Corollary~\ref{cor:h_r_s} to infer the invertibility of the Stokes operator $\calS_\Omega: X_{r}\of\Omega \to \dual{ X_{r'}\of\Omega}$. 
\begin{itemize}
\item First, we will decompose $\calS_\Omega$ into its interior and boundary parts (see \secref{s:loceq}). 

\item Second, we will use the boundedness of $\Omega$ to construct a pseudoinverse of $\calS_\Omega$, hence showing that it has a finite index (see \secref{s:decomposition}). 

\item Third, and last, we will show that $\calS_\Omega$ and $\dual\calS_\Omega$ are injective (see \secref{s:stokes_slip_injective}).
\end{itemize}
%

\subsection{Localized Equations}
\label{s:loceq}
The goal of this section is to localize the Stokes equations. The technique's essence is to test the Stokes variational system with a cutoff version of a velocity-pressure pair $\of{\vec v, q}$ defined over an unbounded domain. This exhibits the local behavior, in operator terms, of the Stokes linear map which splits (locally) into bounded operators including invertible $\calS_{\bbR^n}$ or $\calS_{\halfspace}$ plus a compact part.

\begin{definition}[localization operator] 
\label{def:loc_op}
Let $\Omega$ be a $W^{2-1/s}_s$-domain. Let $\vec x \in \clos\Omega$ and 
$\zeta \in \CCinf{B\of{\vec x, \delta}}$. Then for every $s' \leq r \leq s$, 
    \begin{align*}
        &\fullfunction{\calR_{\zeta}}{X_{r}\of{\Qe}}{X_{r}\of{\Omega}}
                     {\of{\ve, \qe}}{\zeta \calP \of{\ve, \qe}}     \\   
        &\fullfunction{\calE_{\zeta}}{X_{r}\of{\Omega}}{X_{r}\of{\Qe}}
                     {\of{\vec v, q}}{\calP^{-1}\of{\zeta \vec v, \zeta q}}         
    \end{align*}
are \emph{localization operators} if and only if
    \begin{itemize}
        \item when $\vec x \in \Omega$, then $\delta = \dist\of{\vec x, \bdy\Omega}$, and $\Psi = \calI$, $\calP = \calI$ and 
        $Q = \Qe = \bbR^n$; 
        \item when $\vec x \in \bdy\Omega$, then $\delta>0$ is
          sufficiently small so that Corollary~\ref{cor:omega_diffeomorphism} holds, $\Ue(\xe,\delta) = \supp\del{\calI - \Psi}\subset \Be_{-}(\xe,\delta)$,
        $U(\vec x,\delta) = \Psi(\Ue)$ (see \figref{f:RefMap}), 
          and $\Qe = \halfspace$, $Q = \Psi\of{\Qe}$. 
    \end{itemize}
\end{definition}


\begin{lemma}[continuity]
\label{lem:cont_RE}
The operators $\calR_{\zeta}$ and $\calE_{\zeta}$ are continuous. 
\end{lemma}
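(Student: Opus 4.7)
The plan is to prove continuity of $\calR_\zeta$ and $\calE_\zeta$ by combining two ingredients: Theorem~\ref{thm:bhs_isomorphic_domains} (the Piola transform $\calP$ and its inverse are bounded isomorphisms) with the Leibniz rule for products with the smooth cutoff $\zeta$. The compact support of $\zeta$ in $B(\vec x,\delta)$ is what lets us shuttle between the homogeneous space $V_r(\Qe)$ on the unbounded set $\Qe$ and the inhomogeneous space $V_r(\Omega)$ on the bounded set $\Omega$ without losing control of the $L^r$ part of the norm. I will describe the argument for $\calR_\zeta$ in detail; the argument for $\calE_\zeta$ is a mirror image.

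Given $(\ve, \qe) \in X_r(\Qe)$, Theorem~\ref{thm:bhs_isomorphic_domains} produces $(\vec v, q) = \calP(\ve, \qe) \in X_r(Q)$ with $\norm{(\vec v, q)}_{X_r(Q)} \lesssim \norm{(\ve, \qe)}_{X_r(\Qe)}$. The Leibniz rule $\grad(\zeta \vec v) = \zeta \grad \vec v + \grad\zeta \otimes \vec v$ then yields
\begin{equation*}
    \norm{\zeta \vec v}_{W^1_r(\Omega)}
    \lesssim \norm{\zeta}_{W^1_\infty}\bigl( |\vec v|_{W^1_r(Q)} + \norm{\vec v}_{L^r(B(\vec x,\delta)\cap Q)} \bigr).
\end{equation*}
The first term inside the parentheses is $\norm{\vec v}_{V_r(Q)}$ by construction. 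For the local $L^r$ term, I invoke Proposition~\ref{prop:galdi_simader}: when $1<r<n$, the Gagliardo--Nirenberg bound \eqref{eq:GN_inequality} combined with H\"older's inequality on the bounded set $B(\vec x,\delta)\cap Q$ yields $\norm{\vec v}_{L^r(B(\vec x,\delta)\cap Q)} \lesssim |\vec v|_{W^1_r(Q)}$; when $r\ge n$, the canonical representative fixed by the normal-trace condition on $\partial\Qe$ combined with Poincar\'e on the bounded set gives the same bound. The boundary condition $(\zeta \vec v)\cdot\vec\nu = 0$ on $\partial\Omega$ is inherited from $\vec v \cdot \vec\nu_{\vec x} = 0$ on $\partial Q$ via \eqref{eq:piola_identity}, using the local coincidence of $\partial Q$ and $\partial\Omega$ guaranteed by (P\ref{P2}) for $\delta$ as in Definition~\ref{def:loc_op}. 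For the pressure, $\norm{\zeta q}_{L^r_0(\Omega)} \le \norm{\zeta q}_{L^r(\Omega)} \le \norm{\zeta}_{L^\infty}\norm{q}_{L^r(Q)}$ via the quotient-norm estimate. Chaining these bounds yields continuity of $\calR_\zeta$.

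For $\calE_\zeta$ I would run the argument in reverse. Starting from $(\vec v, q) \in X_r(\Omega)$, the boundedness of $\Omega$ gives $\norm{\vec v}_{L^r(\Omega)} \lesssim \norm{\vec v}_{W^1_r(\Omega)}$ for free, so the Leibniz rule produces $|\zeta \vec v|_{W^1_r(Q)} \lesssim \norm{\vec v}_{W^1_r(\Omega)}$. The compact support of $\zeta$ in $B(\vec x,\delta)$ allows zero-extension across $\partial B(\vec x,\delta)$, and density of $C^\infty_c$ in the homogeneous space places $\zeta \vec v$ in $V_r(Q)$, with boundary condition $\zeta \vec v \cdot \vec\nu = 0$ on $\partial Q$ inherited from $\partial\Omega$. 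A final application of Theorem~\ref{thm:bhs_isomorphic_domains} transfers the estimate to $\norm{\calP^{-1}(\zeta\vec v, \zeta q)}_{X_r(\Qe)} \lesssim \norm{(\vec v, q)}_{X_r(\Omega)}$.

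The only mildly subtle point, which I expect to be the main technical obstacle, is the local $L^r$ control of $\vec v \in V_r(Q)$ when $r \geq n$: in that regime homogeneous-Sobolev elements are a priori only equivalence classes modulo constants, so the meaning of $\norm{\vec v}_{L^r(B(\vec x,\delta)\cap Q)}$ must be fixed. This is resolved by the normal-trace constraint encoded in $V_r(\halfspace) = \sobHZ{1}{r}{\halfspace}$, which selects a unique representative; Poincar\'e on the bounded set $B(\vec x,\delta) \cap Q$ then closes the estimate. Beyond that, the proof is routine product-rule bookkeeping chained through the Piola isomorphism.
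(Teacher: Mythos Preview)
Your approach is the same as the paper's: both invoke the smoothness of $\zeta$ together with the continuity of $\calP$ and $\calP^{-1}$ from Theorem~\ref{thm:bhs_isomorphic_domains} (with $\calP=\calI$ in the interior case), the paper doing so in two sentences and leaving all the Leibniz-rule and local-$L^r$ bookkeeping implicit. One small refinement: for $r\ge n$ the normal-trace constraint on $\partial\Qe$ only pins down the $n$-th component of $\vec v$, so the tangential components $\vec v'$ still sit in an equivalence class modulo constants and need a separate normalization (e.g.\ vanishing mean over $\Ue$) before Poincar\'e closes the estimate --- but the paper does not spell this out either.
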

\begin{proof} 
Since $\zeta$ is smooth, this follows from $\calP$ and $\calP^{-1}$ being
continuous. This is due to \thmref{thm:bhs_isomorphic_domains} if
$\vec x \in \bdy\Omega$ and to $\calP = \calI$ if $\vec x \in \Omega$.
\end{proof}

Next we state the equations satisfied by the localized Stokes operator using $\calR_\zeta$ and $\calE_\zeta$. 
This process is applied both to the interior of $\Omega$ in Proposition~\ref{prop:sslip_interior_localization} and to its boundary in Proposition~\ref{prop:sslip_boundary_localization}.

\begin{proposition}[interior localization]
\label{prop:sslip_interior_localization}
 \begin{subequations}
Let $\vec x \in \Omega$. The Stokes (interior) operators    
$\dual{\calR_{\zeta}}\calS_\Omega : X_r\of{\Omega} \to \dual{X_{r'}\of{\bbR^n}}$ and 
$S_\Omega{\calR_{\zeta}} :X_r\of{\bbR^n} \to \dual{X_{r'}\of{\Omega}}$ are linear continuous 
 and can be written as 
    \begin{equation}\begin{aligned}
        \label{eq:stokes_interior_decomposition_short}
            \dual{\calR_{\zeta}}\calS_\Omega 
                &= \widetilde\calS \calE_{\zeta} + \dual{\calP}\calK_{\zeta} \ ,  \quad 
            \calS_\Omega{\calR_{\zeta}} 
                 = \dual\calE_{\zeta} \widetilde\calS + \calK_{\zeta} {\calP} ,
    \end{aligned}\end{equation}
where $\widetilde\calS = \calS_{\bbR^n}$, $\calP : X_r(\bbR^n)
\rightarrow X_r(\bbR^n)$ is the identity, 
$\calK_{\zeta}:X_r\of{\bbR^n}\to \dual{X_{r'}\of{\bbR^n}}$ is given by 
    \begin{equation}\begin{aligned}\label{eq:stokes_localized_fi}
        \calK_{\zeta}\of{\vec u, p}\of{\vec v, q}   
            &:=  
                - \pair{p, \grad_{\vec x} \zeta \cdot \vec v}_{U(\vec x,\delta)}
                - \pair{\grad_{\vec x} \zeta \cdot \vec u, q}_{U(\vec x,\delta)}     \\
           &\quad - \eta \pair{\vec{\mathfrak{e}}_\zeta\of{\vec u}, \gradS{\vec v}}_{U(\vec x,\delta)} 
                + \eta \pair{\gradS {\vec u}, \vec{\mathfrak{e}}_\zeta\of{\vec v}}_{U(\vec x,\delta)},
    \end{aligned}\end{equation}
and 
    \begin{equation}\begin{gathered}
        \label{eq:stokes_interio_theta_estimate}
       \vec{\mathfrak{e}}_\zeta\of{\vec w} 
           := \frac{1}{2}\of{\grad_{\vec x} \zeta \otimes \vec w + \vec w \otimes \grad_{\vec x}\zeta},  \\   
        \normL{\vec{\mathfrak{e}}_\zeta\of{\vec w}}t{U(\vec x,\delta)} 
            \lesssim \normL{\vec w}t{U(\vec x,\delta)},
    \end{gathered}\end{equation}
where $1\leq t \leq \infty$.
\end{subequations}
\end{proposition}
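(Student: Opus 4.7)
The plan is to exploit the fact that when $\vec x$ lies in the interior of $\Omega$, Definition~\ref{def:loc_op} forces $\Psi = \calI$, $\calP = \calI$, and $\Qe = Q = \bbR^n$, so that both $\calR_\zeta$ and $\calE_\zeta$ reduce to pointwise multiplication by the cutoff $\zeta$; moreover $\supp \zeta \subset B(\vec x,\delta) \subset \Omega$, so integration over $\Omega$ and over $\bbR^n$ produces the same result. The core algebraic step is to expand
\[
\dual{\calR_\zeta}\calS_\Omega(\vec u,p)(\ve,\qe) = \calS_\Omega(\vec u,p)(\zeta\ve,\zeta\qe), \quad \widetilde\calS\,\calE_\zeta(\vec u,p)(\ve,\qe) = \calS_{\bbR^n}(\zeta\vec u,\zeta p)(\ve,\qe),
\]
using the two product rules $\gradS(\zeta\vec w) = \zeta\,\gradS\vec w + \vec{\mathfrak{e}}_\zeta(\vec w)$ and $\divg(\zeta\vec w) = \zeta\,\divg\vec w + \grad\zeta\cdot\vec w$. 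Subtracting term by term, the matching contributions $\eta\,\zeta\,\gradS u : \gradS\ve$, $\zeta p \divg\ve$, and $\zeta\qe \divg\vec u$ cancel, and the four surviving cross-terms are exactly those appearing in \eqref{eq:stokes_localized_fi}. Since $\dual\calP = \calI$ in this interior case, this yields the first identity $\dual{\calR_\zeta}\calS_\Omega = \widetilde\calS\calE_\zeta + \dual\calP\calK_\zeta$. The twin identity $\calS_\Omega\calR_\zeta = \dual{\calE_\zeta}\widetilde\calS + \calK_\zeta\calP$ then follows by the same manipulation with the roles of trial and test swapped.

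For continuity of the two composite operators on the left-hand sides, I would invoke Lemma~\ref{lem:cont_RE} (boundedness of $\calR_\zeta$ and $\calE_\zeta$) together with the standard boundedness of $\calS_\Omega$ and $\widetilde\calS = \calS_{\bbR^n}$ as bilinear forms on $X_r \times X_{r'}$. To bound $\calK_\zeta$, I would estimate the four summands in \eqref{eq:stokes_localized_fi} individually: all involve $\grad\zeta \in \Linf{\bbR^n}$ paired via H\"older against components of $(\vec u,p)$ and $(\vec v,q)$. The pointwise inequality $|\vec{\mathfrak{e}}_\zeta(\vec w)| \lesssim |\grad\zeta|\,|\vec w|$ is immediate from the definition of $\vec{\mathfrak{e}}_\zeta$, and it both proves the bound \eqref{eq:stokes_interio_theta_estimate} and shows that the symmetric-gradient terms admit the same H\"older treatment as the pressure-divergence ones.

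I expect the main, and really only, delicate point to be the sign bookkeeping in the subtraction step. Because the Stokes form $\calS_\Omega$ is not symmetric in its $(p,q)$-arguments, the pressure-divergence contributions from the two expansions carry the \emph{same} sign and therefore do not cancel; instead they combine to produce the genuine net terms $-\pair{p,\grad\zeta\cdot\ve}$ and $-\pair{\qe,\grad\zeta\cdot\vec u}$. An analogous phenomenon for the two symmetric-gradient cross-terms yields $-\eta\pair{\vec{\mathfrak{e}}_\zeta(\vec u),\gradS\ve} + \eta\pair{\gradS\vec u,\vec{\mathfrak{e}}_\zeta(\ve)}$. These are exactly the four summands defining $\calK_\zeta$ in \eqref{eq:stokes_localized_fi}, and once this bookkeeping is carried out carefully the proposition is an immediate consequence of the product rule together with Lemma~\ref{lem:cont_RE}.
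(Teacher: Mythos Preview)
Your proposal is correct and follows essentially the same route as the paper: both arguments reduce $\calR_\zeta$ and $\calE_\zeta$ to multiplication by $\zeta$ (using $\calP=\calI$ in the interior case), then transfer $\zeta$ from the test pair to the trial pair via the product rules $\gradS(\zeta\vec w)=\zeta\gradS\vec w+\vec{\mathfrak e}_\zeta(\vec w)$ and $\divg(\zeta\vec w)=\zeta\divg\vec w+\grad\zeta\cdot\vec w$, identifying the residual as $\calK_\zeta$. Your sign bookkeeping is more explicit than the paper's (which simply says ``move the cutoff function $\zeta$''), but the logical content is identical; continuity and the estimate \eqref{eq:stokes_interio_theta_estimate} are handled the same way in both.
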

\begin{proof} 
Let $(\vec u, p) \in X_r(\Omega)$, $(\ve, \qe) \in X_r(\bbR^n)$
be fixed. To localize $\calS_\Omega$, we employ a test pair of the form
$\zeta(\ve, \qe)$ and switch the cut-off function $\zeta$ as a
multiplier of the solution pair $(\vec u, p)$.
To do so, we first realize that since $\calR_\zeta^* \calS_\Omega$ and $\calS_\Omega \calR_\zeta$ are compositions
of linear and continuous operators, they are themselves linear and continuous. 
To prove \eqref{eq:stokes_interior_decomposition_short} we recall Definition~\ref{def:loc_op}
\[
\calR_\zeta (\ve, \qe) = \zeta \calP(\ve, \qe) = \zeta (\ve, \qe) ,
\]
because $\calP = \calI$. We multiply by $\calS_\Omega(\vec u,p)$ and rearrange terms to deduce 
\[
\pair{\dual{\calR_{\zeta}}\calS_\Omega\of{\vec u, p}, \of{\ve, \qe}}_{\dual{X_{r'}\of{\bbR^n}}, X_{r'}\of{\bbR^n}}
= \calS_\Omega(\vec u,p)(\zeta \ve, \zeta \qe) . 
\]
We now move the cutoff function $\zeta$ from $(\ve, \qe)$ to $(\vec u,p)$ to obtain 
\[
\calS_\Omega(\vec u,p)(\zeta \ve, \zeta \qe) 
= \calS_\Omega (\zeta\vec u, \zeta p)(\ve, \qe) 
+ \calK_\zeta(\vec u, p) \calP (\ve, \qe) .
\]
Since $\calP$ is the identity, we further have $(\zeta\vec u, \zeta p) = \calE_\zeta(\vec u, p)$ 
thereby getting the first expression of \eqref{eq:stokes_interior_decomposition_short}. 
The proof of the second expression is similar and thus omitted for brevity. 
Using H\"older's inequality, and smoothness of $\zeta$, we get the estimate 
\eqref{eq:stokes_interio_theta_estimate}. This concludes the proof. 
\end{proof}

For simplicity for the rest of this section we let $\eta = 1$.

\begin{proposition}
[boundary localization]
\label{prop:sslip_boundary_localization}
\begin{subequations}
Let $\vec x \in \bdy\Omega$, $Q = \Psi(\bbR^n_{-})$, $\Ue(\xe,\delta)
= \supp\of{\calI - \Psi} \subset \Be_{-}(\xe,\delta)$, and $U(\vec x, \delta) = \Psi(\Ue)$. 
The Stokes (boundary) operators $\dual{\calR_{\zeta}}\calS_\Omega : X_r\of{\Omega} \to \dual{X_{r'}\of{\bbR^n_{-}}}$  
and $S_\Omega{\calR_{\zeta}} :X_r\of{\bbR^n_{-}} \to \dual{ X_{r'}\of{\Omega}}$ 
are continuous and can be written as 
     \begin{equation}\begin{aligned}
        \label{eq:stokes_boundary_decomposition_short}
           \dual{\calR_{\zeta}}\calS_\Omega 
               &= (\widetilde\calS + \calC) \calE_{\zeta} + \dual{\calP}\calK_{\zeta} \ ,  \quad 
            S_\Omega{\calR_{\zeta}} 
               = \dual\calE_{\zeta} (\widetilde\calS + \calC)+ \calK_{\zeta} {\calP}      ,     
    \end{aligned}\end{equation}
where $\widetilde\calS := \calS_\Qe + \calB$ with 
    \begin{align}\label{eq:s_decomp_bi}
    \calB\of{\we, \pi}\of{\ve, \qe} 
        &:=                
            \langle \vec{\varepsilon}_{\matP}\of{\we}, J_{\xe}^{-1}
                   \vec{\varepsilon}_{\matP}\of{\ve}\rangle_{\halfspace} 
            -\pair{\gradS{\we}, \gradS{\ve}}_{\halfspace},
    \end{align}
and
    \begin{equation}
      \label{eq:s_decomp_ci}
    \begin{aligned}  
     \calC\of{\we, \pi}\of{\ve, \qe}
            &:= \langle \vec\vartheta_{\matP}\of{\we}, J_{\xe}^{-1}
                      \vec\vartheta_{\matP}\of{\ve}\rangle_{\Ue(\xe, \delta)} 
            \\ &\quad -
             \langle \vec\vartheta_{\matP}\of{\we},  J_{\xe}^{-1}
                     \vec{\varepsilon}_{\matP}\of{\ve}\rangle_{\Ue(\xe,\delta)}
            - 
            \langle \vec{\varepsilon}_{\matP}\of{\we}, J_{\xe}^{-1}
                      \vec\vartheta_{\matP}\of{\ve}\rangle_{\Ue(\xe,\delta)} .                    
    \end{aligned}
    \end{equation}
The operators $\vec{\varepsilon}_{\matP}$ and $\vec\vartheta_{\matP}$ are defined in 
\eqref{eq:hat_varTheta} and $K_{\zeta}$ is defined in \eqref{eq:stokes_localized_fi}.  
\end{subequations}
\end{proposition}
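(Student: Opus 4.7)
My plan is to mirror the argument of Proposition~\ref{prop:sslip_interior_localization}, but now transport test pairs between $\Omega$ and $\halfspace$ via the Piola transform $\calP$ introduced in \eqref{eq:cP} and decompose the symmetric-gradient term using Proposition~\ref{prop:piola_sym_grad_decomposition}. Continuity of the two composite operators is immediate: $\calR_\zeta$ and $\calE_\zeta$ are continuous by Lemma~\ref{lem:cont_RE}, and $\calS_\Omega$ is a continuous bilinear form on $X_r(\Omega)\times X_{r'}(\Omega)$.

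To obtain the first identity in \eqref{eq:stokes_boundary_decomposition_short}, I would fix $(\vec u,p)\in X_r(\Omega)$ and $(\ve,\qe)\in X_r(\halfspace)$ and expand
\begin{equation*}
\pair{\dual\calR_\zeta \calS_\Omega(\vec u,p),(\ve,\qe)}
= \calS_\Omega(\vec u,p)\bigl(\zeta\calP(\ve,\qe)\bigr).
\end{equation*}
Using the Leibniz rule for each of the three integrands of $\calS_\Omega$ (see \eqref{eq:stokes_form}), I shift the cutoff $\zeta$ from the test pair to $(\vec u,p)$; the ``derivative of $\zeta$'' remainders are, by exactly the same computation as in Proposition~\ref{prop:sslip_interior_localization}, the quantity $\dual{\calP}\calK_\zeta(\vec u,p)(\ve,\qe)$ (note that the remainder lives on $U(\vec x,\delta)\supset\supp\grad\zeta$, where $\calP$ acts). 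What remains is $\calS_\Omega(\zeta\vec u,\zeta p)\bigl(\calP(\ve,\qe)\bigr)$, i.e.\ a Stokes form on $\Omega$ whose arguments are $\calE_\zeta(\vec u,p)$ pushed forward by $\calP$.

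The core step is to rewrite this remaining Stokes form as an integral on $\halfspace$. For the pressure--divergence blocks, the Piola identities \eqref{eq:piola_b} and \eqref{eq:piola_a} transform $-\langle \zeta p,\divg(\matP\ve)\rangle_\Omega+\langle \qe,\divg(\matP\tilde{\vec u})\rangle_\Omega$ (with $\tilde{\vec u}$ the Piola transform of $\zeta\vec u$) cleanly into the corresponding $\halfspace$ integrals with no residue, and since $\Psi=\calI$ outside $\Ue$, no $\calC$-contribution arises. For the viscous term $\eta\langle\gradS{\zeta\vec u},\gradS{\matP\ve}\rangle_\Omega$, I change variables to $\halfspace$, absorb the Jacobian $J_\xe$ from $\dif\vec x = J_\xe\dif\xe$, and invoke Proposition~\ref{prop:piola_sym_grad_decomposition} twice to obtain
\begin{equation*}
\gradS{\vec u}\composed\Psi = J_\xe^{-1}\bigl(\vec\varepsilon_\matP(\ue)-\vec\vartheta_\matP(\ue)\bigr),
\qquad
\gradS{\vec v}\composed\Psi = J_\xe^{-1}\bigl(\vec\varepsilon_\matP(\ve)-\vec\vartheta_\matP(\ve)\bigr).
\end{equation*}
Expanding the inner product gives four pieces: the $\vec\varepsilon_\matP$--$\vec\varepsilon_\matP$ piece plus $\pm\langle\gradS\ue,\gradS\ve\rangle_\halfspace$ (added and subtracted) assembles $\calS_\halfspace(\ue,\pe)(\ve,\qe)+\calB(\ue,\pe)(\ve,\qe)$ from \eqref{eq:s_decomp_bi}, while the three remaining pieces involving $\vec\vartheta_\matP$ are supported in $\Ue(\xe,\delta)$ because $\vec\vartheta_\matP$ vanishes where $\Psi=\calI$ (so that $\matP^{-1}\composed\Psi=\calI$, hence $\grad_\xe(\matP^{-1}\composed\Psi)=0$); these are exactly the three terms defining $\calC$ in \eqref{eq:s_decomp_ci}.

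The main obstacle is bookkeeping: one must check that after moving $\zeta$ one genuinely obtains $\calE_\zeta(\vec u,p)=\calP^{-1}(\zeta\vec u,\zeta p)$ as the inner argument (so that $\calP\calE_\zeta=(\zeta\vec u,\zeta p)$ cancels correctly), and that the $\vec\vartheta_\matP$--terms confine themselves to $\Ue$ with the correct Jacobian factor $J_\xe^{-1}$. The second identity in \eqref{eq:stokes_boundary_decomposition_short} follows by the same steps with the roles of trial and test pair swapped, using $\calR_\zeta(\ve,\qe)=\zeta\calP(\ve,\qe)$ on the left of $\calS_\Omega$; symmetry of the viscous bilinear form in $\gradS{\vec u}$ and $\gradS{\vec v}$ ensures that the same $\calB$ and $\calC$ arise, and transposition of the pressure-divergence blocks produces the $\dual\calE_\zeta\widetilde\calS$ structure.
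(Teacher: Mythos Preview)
Your proposal is correct and follows essentially the same route as the paper's proof: shift $\zeta$ to produce the $\dual\calP\calK_\zeta$ remainder exactly as in the interior case, then pull the remaining Stokes form back to $\halfspace$ via the Piola identities for the divergence terms and Proposition~\ref{prop:piola_sym_grad_decomposition} for the viscous term, add and subtract $\langle\gradS\ue,\gradS\ve\rangle_{\halfspace}$ to isolate $\calS_{\halfspace}$, and read off $\calB$ and $\calC$. The only minor slips are notational (the test pair should lie in $X_{r'}(\halfspace)$, and $\divg(\matP\ve)$ should be $\divg_{\vec x}\vec v$ with $\vec v=(\matP\ve)\composed\Psi^{-1}$); neither affects the argument.
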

\begin{proof} 
Let $\of{\vec u, p} \in X_r\of{\Omega}$ and $\of{\ve, \qe} \in
    X_{r'}\of{\bbR^n_{-}}$ be fixed.
As in Proposition~\ref{prop:sslip_interior_localization},
we again take $\zeta\of{\ve, \qe}$ as a test function and switch the
cut-off function $\zeta$ as a mutiplier for the solution $\of{\vec u, p}$.
We obtain
\[
\calS_{\Omega}\of{\vec u, p}\calR_{\zeta}\of{\ve, \qe} 
= \calS_{U(\vec x,\delta)}\of{\zeta \vec u, \zeta p} \calP \of{\ve, \qe}
+ \calK_{\zeta}\of{\vec u, p} \calP \of{\ve, \qe} . 
\]
except that now $\calP$ is no longer the identity. Let $(\vec v, q) = \calP(\ve, \qe)$ and 
$(\zeta \ue, \zeta \pe) = \calP^{-1}(\zeta\vec u, \zeta p)$. The divergence term in $\calS_{U(\vec x, \delta)}$ can be simplified after using 
the Piola identity \eqref{eq:piola_b}
\begin{align*}
\int_{U(\vec x,\delta)} \zeta p \divg_{\vec x} \vec v   \dif \vec x          
= \int_{\Ue(\xe,\delta)} \zeta \pe \divg_{\xe} \ve \dif \xe .
\end{align*}
Moreover for the symmetric gradient we resort to Proposition~\ref{prop:piola_sym_grad_decomposition} to write 
\[
\int_{U(\vec x,\delta)} \vec\varepsilon(\zeta\vec u) : \vec\varepsilon(\vec v) \dif \vec x  
= \int_{\Ue(\xe,\delta)} \del{\vec\varepsilon_{\matP}(\zeta \ue) - \vec\vartheta_{\matP}(\zeta\ue) } : 
\del{\vec\varepsilon_{\matP}(\ve) - \vec\vartheta_{\matP}(\ve) } J_{\xe}^{-1} \dif \xe . 
\]
We add $\pair{\vec\varepsilon(\zeta\ue),\vec\varepsilon(\ve)}_{\bbR^n_{-}}$
to create the term $\calS_{\mathbb{R}^n_-}(\zeta\ue,\pe)(\ve,\qe)$ and subtract it to compensate. The latter, together with the preceding terms give rise to $\calB$ in \eqref{eq:s_decomp_bi} and $\calC$ in \eqref{eq:s_decomp_ci}. The expression for $\dual{\calR_{\zeta}}\calS_\Omega$ follows analogously.
\end{proof}

\begin{lemma}
[$\calK_{\zeta}$ is compact] 
\label{lem:k_compact}
Let $s' \leq r \leq s$, $\vec x \in \clos\Omega$ and $Q = \Psi(\Qe)$, 
where $\Qe = \bbR^n$ $(\mbox{if } \vec x \in \Omega)$, $\Qe = \bbR^n_{-}$, $(\mbox{if } \vec x \in \bdy\Omega)$.
The operator $\calK_{\zeta} : X_r\of{Q} \to \dual{X_{r'}\of{Q}}$ is compact.
\end{lemma}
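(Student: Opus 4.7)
The approach is a weak-to-strong argument exploiting the compact support of $\zeta$: all four terms in \eqref{eq:stokes_localized_fi} integrate only over the bounded set $U \subset B(\vec x,\delta)$, so Rellich--Kondrachov applies. I would begin by taking a bounded sequence $\{(\vec u_k,p_k)\}\subset X_r(Q)$ and, by reflexivity of $X_r(Q)$, extracting a subsequence with $(\vec u_k,p_k)\weakto (\vec u,p)$; by linearity of $\calK_\zeta$ the task reduces to showing
\[
\|\calK_\zeta(\vec w_k,\pi_k)\|_{\dual{X_{r'}(Q)}}\longrightarrow 0,
\qquad (\vec w_k,\pi_k):=(\vec u_k-\vec u,p_k-p)\weakto 0.
\]
Since $U$ is bounded, Rellich--Kondrachov gives $W^1_r(U)\cmptembed L^r(U)$, hence $\vec w_k\to 0$ strongly in $L^r(U)$.

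Next I would split the four terms of \eqref{eq:stokes_localized_fi} into two groups. The second and third terms contain $\vec w_k$ without derivatives and, by H\"older's inequality together with the smoothness of $\zeta$ and \eqref{eq:stokes_interio_theta_estimate}, are bounded by
\[
|\pair{\grad\zeta\cdot\vec w_k,q}_U|+\eta\,|\pair{\vec{\mathfrak{e}}_\zeta(\vec w_k),\gradS v}_U|
\lesssim \|\vec w_k\|_{L^r(U)}\,\|(\vec v,q)\|_{X_{r'}(Q)},
\]
whose right-hand side tends to $0$ uniformly over the unit ball of $X_{r'}(Q)$. The first and fourth terms, $-\pair{\pi_k,\grad\zeta\cdot\vec v}_U$ and $\eta\,\pair{\gradS{w_k},\vec{\mathfrak{e}}_\zeta(\vec v)}_U$, are subtler because $\pi_k$ and $\gradS{w_k}$ converge only \emph{weakly} in $L^r(U)$.

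The saving observation---which I expect to be the main (mild) obstacle---is that the \emph{test functions} $\grad\zeta\cdot\vec v$ and $\vec{\mathfrak{e}}_\zeta(\vec v)$ depend on $\vec v$ without derivatives; so as $(\vec v,q)$ ranges over the unit ball of $X_{r'}(Q)$, the families $\{\grad\zeta\cdot\vec v\}$ and $\{\vec{\mathfrak{e}}_\zeta(\vec v)\}$ are \emph{precompact} in $L^{r'}(U)$ by Rellich. A standard $\varepsilon$-net argument then shows that a weakly null sequence in $L^r(U)$ paired against a precompact family in $L^{r'}(U)$ converges to $0$ uniformly over that family: cover the family by finitely many $\varepsilon$-balls and invoke weak convergence at each centre. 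Combining all four bounds yields the claim; everything else is a routine Rellich-plus-H\"older calculation.
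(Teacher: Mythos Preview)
Your argument is correct and is, at bottom, the same Rellich idea as the paper's---applied on the dual side. The paper avoids the $\varepsilon$-net step by exploiting the compact embedding in adjoint form: since $W^1_{r'}(U)\hookrightarrow\hookrightarrow L^{r'}(U)$ is compact, by Schauder's theorem so is $L^r(U)\hookrightarrow (W^1_{r'}(U))^*$, whence the bounded sequences $p_k$ and $\gradS{u_k}$ converge \emph{strongly} in $(W^1_{r'}(U))^*$. Terms I and IV are then estimated directly as
\[
|\pair{p_k,\grad\zeta\cdot\vec v}_U|\lesssim \|p_k\|_{(W^1_{r'}(U))^*}\,\|\vec v\|_{W^1_{r'}(U)},
\qquad
|\pair{\gradS{u_k},\vec{\mathfrak e}_\zeta(\vec v)}_U|\lesssim \|\gradS{u_k}\|_{(W^1_{r'}(U))^*}\,\|\vec v\|_{W^1_{r'}(U)},
\]
with no need to argue precompactness of the test-function family. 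Your $\varepsilon$-net argument is precisely the proof of Schauder's theorem unwound, so the two routes are formally equivalent; the paper's packaging is slightly more economical.

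One point you gloss over and should make explicit: on the unbounded reference domains $Q=\bbR^n$ or $\halfspace$, the norm on $V_{r'}(Q)$ is only the \emph{seminorm} $|\cdot|_{W^1_{r'}}$, so the passage from $\|(\vec v,q)\|_{X_{r'}(Q)}\le 1$ to boundedness of $\vec v$ in the \emph{full} norm $\|\vec v\|_{W^1_{r'}(U)}$---which both your Rellich applications need---requires justification. The paper handles this via Lemma~\ref{lem:equiv_norm} (Korn's inequality on $\Omega$) for the boundary case and the embedding chain $|\vec v|_{W^1_{r'}(\Omega)}\lesssim|\vec v|_{W^1_{r'}(Q)}$; you should insert the analogous step before invoking Rellich on either $\vec w_k$ or the test family.
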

\begin{proof} 
Let $\{ (\vec u_{\ell}, p_{\ell})\}_{\ell \in \bbN} \subset X_r(Q)$ 
be a bounded sequence. Since $X_r(Q)$ is reflexive, there exists a subsequence 
$\{ (\vec u_{\ell} , p_{\ell}) \}_{\ell \in \bbN}$ (not relabeled) such that 
\[
(\vec u_{\ell}, p_{\ell}) \rightharpoonup (\vec u, p) \quad \mbox{in } X_r(\Omega) ,
\]
and due to Rellich-Kondrachov theorem (cf. \cite[Theorem 6.2]{RAAdams_JJFFournier_2003})
\[
\grad \vec u_{\ell} \rightarrow \grad \vec u \quad \mbox{in } \sob1{r'}\Omega^* , \quad  
p_{\ell} \rightarrow p \quad \mbox{in } \sob1{r'}\Omega^* . 
\] 
For simplicity we denote 
$U = U(\vec x,\delta)$. 
We need to prove 
\begin{equation}\label{lem:k_compact_a}
\calK_\zeta(\vec u_\ell,p_\ell) \rightarrow \calK_\zeta(\vec u,p) \quad \mbox{in } X_{r'}(Q)^*  .  
\end{equation}
We will proceed in several steps. We write \eqref{eq:stokes_localized_fi} as follows
\begin{align*}
\calK_{\zeta}\of{\vec u_\ell, p_\ell}\of{\vec v, q} 
&= \textrm{I} + \textrm{II} + \textrm{III} + \textrm{IV} .   
\end{align*}
In view of \lemref{lem:equiv_norm} we estimate $\textrm{I}$ as
\begin{align*}
|\textrm{I}| 
& \lesssim \norm{p_\ell}_{\sob1{r'}{U}^*} 
    \normS{\grad_{\vec x} \zeta \cdot \vec v}{1}{r'}{U} 
\\
&\lesssim \norm{p_\ell}_{\sob1{r'}{\Omega}^*} \normS{\vec v}{1}{r'}{\Omega}
\lesssim \norm{p_\ell}_{\sob1{r'}{\Omega}^*} \normSZ{\vec v}{1}{r'}{\Omega} 
\lesssim \norm{p_\ell}_{\sob1{r'}{\Omega}^*} \normSZ{\vec v}{1}{r'}{Q} .
\end{align*}
For \textrm{II} and \textrm{III} we use the definition \eqref{eq:stokes_interio_theta_estimate} of $\mathfrak{e}_\zeta$ to obtain 
\[
|\textrm{II}| , |\textrm{III}| 
\lesssim \normL{\vec u_\ell}{r}{\Omega} \Big(\normL{q}{r'}{\Omega} + \normS{\vec v}1{r'}{\Omega}\Big) 
\lesssim \normL{\vec u_\ell}{r}{\Omega} 
\Big(\normL{q}{r'}{Q} + \normSZ{\vec v}{1}{r'}{Q} \Big)  . 
\]
\lemref{lem:equiv_norm} again implies 
\begin{align*}
|\textrm{IV}| 
& \lesssim \norm{\gradS {\vec u_\ell}}_{\sob1{r'}{U}^*} 
    \normS{\vec v}{1}{r'}{U} 
\\
&\lesssim     
\norm{\gradS {\vec u_\ell}}_{\sob1{r'}{\Omega}^*} 
    \normS{\vec v}{1}{r'}{\Omega}  
\lesssim \norm{\gradS {\vec u_\ell}}_{\sob1{r'}{\Omega}^*} 
\normSZ{\vec v}{1}{r'}{Q}    . 
\end{align*}
Collecting estimates, we obtain for all $(\vec v,q) \in X_{r'}(Q)$
\begin{multline*}
\sup_{(\vec v, q) \in X_{r'}(Q)} 
\frac{\abs{\calK_{\zeta}\of{\vec u_\ell, p_\ell}\of{\vec \ve, q} - \calK_{\zeta}\of{\vec u, p}\of{\vec \ve, q} }}{\norm{(\vec v,q)}_{X_{r'}(Q)}} \\
\lesssim \norm{p_\ell - p}_{\sob1{r'}{\Omega}^*}
+ \normL{\vec u_\ell - \vec u}{r}{\Omega}
+ \norm{\gradS {\vec u_\ell} - \gradS {\vec u}}_{\sob1{r'}{\Omega}^*}
\end{multline*}
which tends to $0$ as $\ell\to\infty$
and implies \eqref{lem:k_compact_a}. This concludes the proof.
\end{proof}


\begin{lemma}[$\calC$ is compact]
\label{lem:C_compact}
Let $s' \leq r \leq s$ and $\vec x \in \bdy\Omega$. The operator 
$\calC: X_{r}\of{\bbR^{n}_{-}} \to \dual{X_{r'}\of{\bbR^{n}_{-}}}$ defined in \eqref{eq:s_decomp_ci} is compact.
\end{lemma}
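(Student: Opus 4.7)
The plan is to mirror the proof of \lemref{lem:k_compact}: given a bounded sequence $\{(\we_\ell,\pi_\ell)\}$ in $X_r(\bbR^n_-)$, extract by reflexivity a subsequence with $\we_\ell\weakto\we$ in $V_r(\bbR^n_-)$, and show that $\|\calC(\we_\ell,\pi_\ell)-\calC(\we,\pi)\|_{\dual{X_{r'}(\bbR^n_-)}}\to 0$. A first observation is that $\calC$ in \eqref{eq:s_decomp_ci} does not depend on the pressure components, so it suffices to study its action on the velocity arguments $\we$ and $\ve$. The structural property I would exploit is that, by \eqref{eq:hat_varTheta}, $\vec\vartheta_{\matP}$ is of \emph{zero-th order} in its argument: one can write $\vec\vartheta_{\matP}(\ve)=\vec N(\xe)\ve$ with a tensor-valued multiplier $\vec N$ built from $\grad_{\xe}(\matP^{-1}\composed\Psi)$, which lies in $L^s(\Ue)$ because $\Psi\in W^2_s$. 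The integrability gain $1/t^\bullet=1/s+1/t^\circ$ recorded in Proposition~\ref{prop:piola_sym_grad_decomposition} will be the engine driving the whole argument.

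For the two terms in which $\we$ enters only through $\vec\vartheta_{\matP}(\we)$---namely $T_1=\langle\vec\vartheta_{\matP}(\we),J_{\xe}^{-1}\vec\vartheta_{\matP}(\ve)\rangle_{\Ue}$ and $T_2=-\langle\vec\vartheta_{\matP}(\we),J_{\xe}^{-1}\vec\varepsilon_{\matP}(\ve)\rangle_{\Ue}$---I would apply H\"older's inequality to produce a factor $\|\vec\vartheta_{\matP}(\we_\ell-\we)\|_{L^r(\Ue)}\lesssim\|\we_\ell-\we\|_{L^{t^\circ_1}(\Ue)}$ with $1/r=1/s+1/t^\circ_1$. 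The key arithmetic to verify is $t^\circ_1<r^*$, which holds precisely because $s>n$; this triggers the Rellich--Kondrachov compactness $\we_\ell\to\we$ strongly in $L^{t^\circ_1}(\Ue)$. The companion factor in $\ve$---either $\|\vec\vartheta_{\matP}(\ve)\|_{L^{r'}(\Ue)}\lesssim\|\ve\|_{L^{t^\circ_2}(\Ue)}$ with $1/r'=1/s+1/t^\circ_2$, or $\|\vec\varepsilon_{\matP}(\ve)\|_{L^{r'}(\Ue)}\lesssim\|\ve\|_{W^1_{r'}}$---is bounded uniformly over the unit ball of $X_{r'}(\bbR^n_-)$ via Sobolev embedding on the bounded set $\Ue$; the admissibility condition $t^\circ_2\le(r')^*$ again reduces to $s\ge n$.

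The hard part will be the third term
\[
T_3(\we,\ve):=-\langle\vec\varepsilon_{\matP}(\we),J_{\xe}^{-1}\vec\vartheta_{\matP}(\ve)\rangle_{\Ue},
\]
in which $\we_\ell-\we$ enters through its symmetric gradient and converges only \emph{weakly} in $L^r(\Ue)$, so Rellich compactness cannot directly yield strong convergence of first-order quantities. My strategy is to reinterpret $T_3$ as a duality pairing in $L^r(\Ue)\times L^{r'}(\Ue)$ and shift the compactness burden to the dual factor: define $F:X_{r'}(\bbR^n_-)\to L^{r'}(\Ue)$ by $F(\ve):=J_{\xe}^{-1}\vec\vartheta_{\matP}(\ve)=J_{\xe}^{-1}\vec N\ve$. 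Since $\|F(\ve)\|_{L^{r'}(\Ue)}\lesssim\|\vec N\|_{L^s(\Ue)}\|\ve\|_{L^{t^\circ_2}(\Ue)}$ and the Sobolev embedding $W^1_{r'}(\Ue)\hookrightarrow\hookrightarrow L^{t^\circ_2}(\Ue)$ is compact---again precisely because $s>n$ forces $t^\circ_2<(r')^*$---the map $F$ is a compact linear operator. Hence $K:=F(\{(\ve,\qe):\|(\ve,\qe)\|_{X_{r'}}\le 1\})$ is relatively compact in $L^{r'}(\Ue)$, and a standard equicontinuity argument shows that the $L^r$-weakly null sequence $\vec\varepsilon_{\matP}(\we_\ell-\we)$ pairs uniformly to zero against $K$, giving $\sup_{(\ve,\qe)}|T_3(\we_\ell-\we,\ve)|\to 0$. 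Assembling the three estimates yields the claimed compactness; the endpoints $r\in\{s',s\}$ are handled by letting a H\"older exponent degenerate to $\infty$ and invoking $W^1_r(\Ue)\hookrightarrow\hookrightarrow L^\infty(\Ue)$, available since then either $r>n$ or $r'>n$.
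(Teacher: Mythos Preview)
Your proof is correct and follows the same overall architecture as the paper: extract a weakly convergent subsequence, split $\calC$ into the three terms $T_1,T_2,T_3$, and handle $T_1,T_2$ by Rellich--Kondrachov compactness on $\Ue$ using the H\"older exponent relation $1/t^\bullet=1/s+1/t^\circ$ from Proposition~\ref{prop:piola_sym_grad_decomposition}. The paper does exactly this for its terms I and II, with the same arithmetic $1/r=1/s+1/r^\circ$, $1/r'=1/s+1/(r')^\circ$.

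The one substantive difference is in $T_3$. The paper argues only that, for each \emph{fixed} $(\ve,\qe)$, the pairing $\langle\vec\varepsilon_{\matP}(\we_\ell-\we),J_{\xe}^{-1}\vec\vartheta_{\matP}(\ve)\rangle_{\Ue}\to 0$, by weak convergence of $\vec\varepsilon_{\matP}(\we_\ell-\we)$ in $L^r(\Ue)$ against the fixed dual element $J_{\xe}^{-1}\vec\vartheta_{\matP}(\ve)\in L^{r'}(\Ue)$; its concluding sentence records only pointwise convergence of $\calC(\we_\ell,\pi_\ell)$ on $X_{r'}(\bbR^n_-)$. Your argument is tighter: by observing that $F:\ve\mapsto J_{\xe}^{-1}\vec\vartheta_{\matP}(\ve)$ factors through the compact embedding $\sob1{r'}\Ue\hookrightarrow\hookrightarrow L^{t^\circ_2}(\Ue)$ (valid precisely because $s>n$), you get that $F$ maps the unit ball of $X_{r'}(\bbR^n_-)$ to a relatively compact set in $L^{r'}(\Ue)$, whence the weakly null sequence $\vec\varepsilon_{\matP}(\we_\ell-\we)$ pairs against it \emph{uniformly} to zero. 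This delivers the norm convergence $\|\calC(\we_\ell,\pi_\ell)-\calC(\we,\pi)\|_{\dual{X_{r'}(\bbR^n_-)}}\to 0$ that compactness actually requires, so your treatment of $T_3$ is in fact more complete than the paper's.
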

\begin{proof}
The starting point of the proof is the same as in \lemref{lem:k_compact}. For simplicity we set $U = U(\vec x, \delta)$ and $\Ue = \Ue(\xe, \delta)$. 
Let $\{ (\ue_{\ell}, \pe_{\ell})\}_{\ell \in \bbN} \subset X_r(\bbR^{n}_{-})$ be a bounded sequence. 
Since $X_r$ is reflexive, there exists a subsequence 
$\{ (\ue_{\ell} , \pe_{\ell}) \}_{\ell \in \bbN}$ (not relabeled), such that 
\[
\ue_{\ell} \rightharpoonup \ue \quad \mbox{in } V_r(\bbR^n_{-}) , \quad  
\pe_{\ell} \rightharpoonup \pe \quad \mbox{in } L^r(\bbR^{n}_{-}) .
\]
Setting 
\[
\frac{1}{r} = \frac{1}{s} + \frac{1}{r^\circ}   , \quad    \frac{1}{r'} = \frac{1}{s} + \frac{1}{(r')^{\circ}} ,
\]
the following embeddings are compact 
\[
\sob1r\Ue \subset L^{r^\circ}(\Ue) , \quad \sob1{r'}\Ue \subset L^{(r')^\circ}(\Ue) . 
\]
  We rewrite $\calC\of{\ue_\ell, \pe_\ell}\of{\ve, \qe} =
    \textrm{I} + \textrm{II} + \textrm{III}$ with 
\begin{gather*}
\textrm{I}   = \langle \vec\vartheta_{\matP}\of{\ue_\ell}, J_{\xe}^{-1}
                      \vec\vartheta_{\matP}\of{\ve}\rangle_{\Ue}, \quad
\textrm{II}  =                       
             -
             \langle \vec\vartheta_{\matP}\of{\ue_\ell},  J_{\xe}^{-1}
                   \vec{\varepsilon}_{\matP}\of{\ve}\rangle_{\Ue}, \\
\textrm{III} =                      
            - 
            \langle \vec{\varepsilon}_{\matP}\of{\ue_\ell}, J_{\xe}^{-1}
                      \vec\vartheta_{\matP}\of{\ve}\rangle_{\Ue} , 
\end{gather*}
and estimate each term separately. 
The estimate for \textrm{I} reads
\[
\abs{\textrm{I}}
\lesssim \normL{\vec\vartheta_{\matP}\of{\ue_\ell}}{r}\Ue    
    \normL{\vec\vartheta_{\matP}\of{\ve}}{r'}\Ue 
\lesssim \normL{\ue_\ell}{r^\circ}\Ue \normL{\ve}{(r')^\circ}\Ue      
\lesssim \normL{\ue_\ell}{r^\circ}\Ue  \normSZ{\ve}1r{\bbR^n_{-}} , 
\]
in view of \eqref{eq:piola_sym_grad_bound} as well as 
$\normL{\ve}{(r')^\circ}\Ue \le \normS{\ve}1r\Ue \lesssim \normSZ{\ve}1r\Ue$, the latter being a consequence of Remark~\ref{rem:norm_equiv}. 

Applying H\"older's inequality and using $\Psi \in \sob2s{\bbR^n}$ in conjunction with
 \eqref{eq:piola_sym_grad_bound}, we obtain 
\[
\abs{\textrm{II}}
\lesssim 
\normL{\vec\vartheta_{\matP}\of{\ue_\ell}}{r}\Ue  
\normL{\vec\varepsilon_{\vec P}(\ve)}{r'}\Ue 
\lesssim \normL{\ue_\ell}{r^\circ}\Ue \normSZ{\ve}1{r'}{\bbR^n_{-}} . 
\]

Instead of directly estimating \textrm{III}, for every $(\ve, \qe) \in X_r(\bbR^n_{-})$, we consider 
\[
\abs{\langle \vec{\varepsilon}_{\matP}\of{\ue_\ell - \ue}, J_{\xe}^{-1}
                      \vec\vartheta_{\matP}\of{\ve}\rangle_{\Ue}} ,
\]
where we have used the linearity of $\vec{\varepsilon}_{\matP}(\cdot)$. As 
$\vec{\varepsilon}_{\matP}(\cdot) : \sob1r\Ue \rightarrow L^r(\Ue)$ is continuous and 
$J_{\xe}^{-1} \vec\vartheta_{\matP}\of{\ve} \in L^{r'}(\Ue)$, we deduce that 
\[
\lim_{\ell \rightarrow \infty} \abs{\langle \vec{\varepsilon}_{\matP}\of{\ue_\ell - \ue}, J_{\xe}^{-1}
                      \vec\vartheta_{\matP}\of{\ve}\rangle_{\Ue}}  =0 .
\]

Combining the estimates for \textrm{I}, \textrm{II}, \textrm{III}, we obtain 
\[
\lim_{\ell \rightarrow \infty}\abs{\calC\of{\ue_\ell, \pe_\ell}\of{\ve, \qe} - \calC\of{\ue, \pe}\of{\ve, \qe}}
=0 , 
\]
for every $(\ve, \qe) \in X_r(\bbR^{n}_{-})$. This completes the proof. 
\end{proof}

Now that we have obtained a local decomposition of $\calS_\Omega$ it is important to show that the perturbed Stokes operator $\widetilde \calS$ in Propositions~\ref{prop:sslip_interior_localization} and \ref{prop:sslip_boundary_localization} is invertible and enjoys the same smoothing property as $\calS_{\bbR^n}$ and $\calS_{\halfspace}$. The strategy is to use \emph{Neumann} perturbation theorem \cite[Chapter 4: Theorem 1.16]{TKato_1966}, \cite[Lemma 3.1]{GPGaldi_CGSimader_HSohr_1994} which we restate in a form that suits our needs.

\begin{lemma}
[perturbation of identity]
\label{lem:Kato} 
Consider two Banach spaces $X$ and $Y$, and two bounded linear operators $\calA$ and $\calB$ from $X$ to $Y$. Suppose $\calA$ has a bounded inverse from $Y$ to $X$ and that 
	\[
		\norm{\calB x}_Y \leq C \norm{\calA x}_Y \quad \forall x \in X,
	\]
with a constant $0 < C < 1$. Then $\calA+\calB:X \to Y$ is bijective with a bounded inverse.
\end{lemma}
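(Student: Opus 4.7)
The plan is to reduce the statement to the classical Neumann series theorem by factoring out $\calA$. Since $\calA$ has a bounded inverse $\calA^{-1} : Y \to X$, I would write
\[
 \calA + \calB = \of{\calI_Y + \calB \calA^{-1}} \calA ,
\]
so that bijectivity of $\calA + \calB$ with bounded inverse is equivalent to bijectivity of $\calI_Y + \calB\calA^{-1} : Y \to Y$ with bounded inverse.

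Next I would translate the hypothesis into an operator norm bound on $\calB\calA^{-1}$. For arbitrary $y \in Y$, set $x := \calA^{-1} y \in X$; then $\calA x = y$, so
\[
 \norm{\calB \calA^{-1} y}_Y = \norm{\calB x}_Y \le C \norm{\calA x}_Y = C \norm{y}_Y ,
\]
and therefore $\norm{\calB\calA^{-1}}_{\calL(Y,Y)} \le C < 1$.

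The last step is to invoke the Neumann series: because the operator norm of $\calB\calA^{-1}$ is strictly less than one, the series $\sum_{k=0}^\infty (-\calB\calA^{-1})^k$ converges absolutely in $\calL(Y,Y)$ and provides a bounded two-sided inverse of $\calI_Y + \calB\calA^{-1}$, with norm at most $(1-C)^{-1}$. Composing with $\calA^{-1}$ yields a bounded inverse of $\calA + \calB$, namely
\[
 (\calA + \calB)^{-1} = \calA^{-1} \of{\calI_Y + \calB\calA^{-1}}^{-1} : Y \to X ,
\]
which establishes both bijectivity and continuity of the inverse. There is no real obstacle here; the only thing to be careful about is the factorization order (pulling $\calA$ out on the right rather than on the left) so that the bound on $\calB\calA^{-1}$ falls out of the hypothesis directly via the substitution $x = \calA^{-1}y$.
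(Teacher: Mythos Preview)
Your argument is correct and is exactly the Neumann series proof the paper has in mind: the paper does not write out a proof but simply cites the Neumann perturbation theorem \cite[Chapter 4: Theorem 1.16]{TKato_1966} and \cite[Lemma 3.1]{GPGaldi_CGSimader_HSohr_1994}, which is precisely the factorization $\calA+\calB=(\calI_Y+\calB\calA^{-1})\calA$ together with $\norm{\calB\calA^{-1}}\le C<1$ that you carried out.
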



\begin{theorem}
[well-posedness of $\widetilde\calS$]
\label{thm:hs_perturbed_wellposed}
Let $s' \leq r \leq s$ and $\vec x \in \clos\Omega$. There exists a constant $C = C\of{n,r,s,\bdy\Omega}$ such that if $\delta \le C$, then the (perturbed) Stokes problem
\[
    \widetilde\calS \of{\we , \pi} = \calF
\] 
is well-posed from $X_r\of{\Qe}$ to $\dual{X_{r'}\of{\Qe}}$, where $\Qe = \bbR^n$ if $\vec x \in \Omega$ or $\Qe = \bbR^n_{-}$ if $\vec x \in \bdy\Omega$. Additionally, if $r < t \leq s$ and $\calF \in \dual{X_{t'}\of{\Qe}}$, then $\of{\we, \pi} \in X_{t}\of{\Qe}$.
\end{theorem}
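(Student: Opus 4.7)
The plan is to apply the Neumann-type perturbation result in Lemma \ref{lem:Kato} with $\calA = \calS_{\Qe}$ and $\calB$ the correction given in \eqref{eq:s_decomp_bi}. If $\vec x \in \Omega$, then by Definition \ref{def:loc_op} we have $\Psi = \calI$, $\matP = \calI$, and $J_{\xe} = 1$, so $\calB \equiv 0$ and the conclusion reduces to Theorem \ref{thm:rn_wellposed}. So the substance of the proof lies in the boundary case $\vec x \in \bdy\Omega$, where $\Qe = \halfspace$, and I focus on it below.

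First I would observe that the integrands in $\calB(\we,\pi)(\ve,\qe)$ vanish outside $\Ue(\xe,\delta) = \supp(\calI - \Psi)$, because there $\matP = \calI$, $J_{\xe} = 1$, and $\vec\varepsilon_{\matP}(\cdot) = \vec\varepsilon(\cdot) = \tfrac12\gradS\cdot$, so the two inner products cancel. On $\Ue$, I would decompose $\calB$ by telescoping with $\vec\varepsilon(\we)$, $\vec\varepsilon(\ve)$, and the constant $1$:
\begin{equation*}
\calB(\we,\pi)(\ve,\qe)
= \int_{\Ue} \bigl[\vec\varepsilon_{\matP}(\we)-\vec\varepsilon(\we)\bigr] : J_{\xe}^{-1}\vec\varepsilon_{\matP}(\ve)
+ \int_{\Ue}\vec\varepsilon(\we) : J_{\xe}^{-1}\bigl[\vec\varepsilon_{\matP}(\ve)-\vec\varepsilon(\ve)\bigr]
+ \int_{\Ue}\vec\varepsilon(\we) : \bigl[J_{\xe}^{-1}-1\bigr]\vec\varepsilon(\ve).
\end{equation*}
Using the identity $\vec\varepsilon_{\matP}(\we)-\vec\varepsilon(\we) = (\matP-\calI)\vec\varepsilon(\we)\matP^{-1} + \vec\varepsilon(\we)(\matP^{-1}-\calI)$, H\"older's inequality, and the $L^r$--$L^{r'}$ duality, each term is bounded by a constant multiple of
\begin{equation*}
\bigl(\|\matP-\calI\|_{L^\infty(\Ue)} + \|\matP^{-1}-\calI\|_{L^\infty(\Ue)} + \|J_{\xe}^{-1}-1\|_{L^\infty(\Ue)}\bigr)\,\normSZ{\we}1r{\halfspace}\,\normSZ{\ve}1{r'}{\halfspace}.
\end{equation*}
Since $\Psi(\xe) = (\xe', \xe^n + \widetilde\calE\omega(\xe))$, Lemma \ref{lem:tcalE} and Corollary \ref{cor:omega_diffeomorphism} yield
\begin{equation*}
\|\matP-\calI\|_{L^\infty} + \|\matP^{-1}-\calI\|_{L^\infty} + \|J_{\xe}^{-1}-1\|_{L^\infty}
\lesssim |\widetilde\calE\omega|_{W^1_\infty(\Theta(\xe,\delta))}
\lesssim \delta^{1-n/s}\,\normSZ{\omega}{2-1/s}{s}{\De(\xe',\delta)},
\end{equation*}
which tends to $0$ as $\delta \to 0^+$. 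Hence
$\|\calB(\we,\pi)\|_{\dual{X_{r'}(\halfspace)}} \le C(\delta)\,\|(\we,\pi)\|_{X_r(\halfspace)}$ with $C(\delta)\to 0$.

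Combining this bound with Theorem \ref{thm:hs_wellposed}, which gives the two-sided estimate $\|(\we,\pi)\|_{X_r(\halfspace)} \le C_0 \|\calS_{\halfspace}(\we,\pi)\|_{\dual{X_{r'}(\halfspace)}}$, I obtain $\|\calB(\we,\pi)\|_{\dual{X_{r'}}} \le C_0 C(\delta)\,\|\calS_{\halfspace}(\we,\pi)\|_{\dual{X_{r'}}}$. Choosing $\delta$ small enough that $C_0 C(\delta) < 1$ places us in the setting of Lemma \ref{lem:Kato}, and yields that $\widetilde\calS = \calS_{\halfspace} + \calB$ is a bijection $X_r(\halfspace) \to \dual{X_{r'}(\halfspace)}$ with bounded inverse. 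For the additional regularity claim, I would repeat the same perturbation estimate with the exponent $t$ in place of $r$: since the constant $C_0 C(\delta)$ also depends continuously on the exponent, a single (possibly smaller) choice of $\delta$ works uniformly for all $r \le t \le s$. Thus $\widetilde\calS$ is simultaneously invertible between $X_t(\halfspace)$ and $\dual{X_{t'}(\halfspace)}$ for every such $t$, and given $\calF \in \dual{X_{t'}(\halfspace)} \subset \dual{X_{r'}(\halfspace)}$, the $X_r$-solution and the $X_t$-solution both satisfy the Neumann series representation driven by $-\calS_{\halfspace}^{-1}\calB$, so by uniqueness they coincide and the $X_r$-solution actually lies in $X_t$.

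The main obstacle is the third paragraph above, namely the telescoping decomposition of $\calB$ and the quantification that each factor $\matP-\calI$, $\matP^{-1}-\calI$, and $J_{\xe}^{-1}-1$ is small in $L^\infty(\Ue)$; this requires the full strength of our $W^2_s$-diffeomorphism construction in \S \ref{s:loc_diff} (especially Lemma \ref{lem:tcalE} and Corollary \ref{cor:omega_diffeomorphism}), and is precisely where the gain of fractional regularity from $2 - 1/s$ to $2$ is used, since merely $W^{2-1/s}_s$ regularity of $\Psi$ would only give a H\"older bound on $\grad\Psi$ not converging to $\calI$ at a useful rate as $\delta \to 0$. Everything else is bookkeeping once this smallness and Theorem \ref{thm:hs_wellposed} are in hand.
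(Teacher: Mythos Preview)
Your argument is correct and matches the paper's: both apply Lemma~\ref{lem:Kato} after telescoping $\calB$ into three terms controlled by $\|\matP-\calI\|_{L^\infty}+\|\matP^{-1}-\calI\|_{L^\infty}+\|J_{\xe}^{-1}-1\|_{L^\infty}=O(\delta^{1-n/s})$ via Lemma~\ref{lem:tcalE} and Corollary~\ref{cor:omega_diffeomorphism} (the paper telescopes in a slightly different order and writes the identity with $\grad_{\xe}\Psi$ rather than $\matP$, but the content is identical), and both handle the extra regularity by simultaneous invertibility plus uniqueness, which is exactly the Galdi--Simader--Sohr argument the paper cites. One cosmetic fix: the inclusion $\dual{X_{t'}(\halfspace)}\subset\dual{X_{r'}(\halfspace)}$ is false for homogeneous spaces on unbounded domains---read the hypothesis as $\calF\in\dual{X_{r'}}\cap\dual{X_{t'}}$ (implicit in the theorem's phrasing), after which your uniqueness argument goes through unchanged.
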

\begin{proof} 
For simplicity we set $U = U(\vec x, \delta)$ and $\Ue = \Ue(\xe,
\delta)$.We first note that
if $\vec x \in \Omega$, then $\widetilde\calS = \calS_{\bbR^n}$ from Proposition~\ref{prop:sslip_interior_localization} and $\calS_{\bbR^n}$ is 
invertible thanks to \thmref{thm:rn_wellposed}.

On the other hand,
if $\vec x \in \bdy\Omega$, then $\widetilde\calS = \calS_{\bbR^n_{-}}
+ \calB$ from Proposition~\ref{prop:sslip_boundary_localization} and
$\calS_{\bbR^n_{-}}$ is invertible owing to \thmref{thm:hs_wellposed}.
To prove the invertibility of $\widetilde{\calS}$ using \lemref{lem:Kato}, we will show that for $\delta$ sufficiently small 
\[
\norm{\calB} \le C(\delta) \| \calS_{\bbR^n_{-}} \| ,  \quad C(\delta) < 1 .  
\]
We start by splitting $\calB\of{\we, \pi}\of{\ve, \qe}$ in \eqref{eq:s_decomp_bi} into three parts 
\begin{gather*}
\textrm{I}
= \big\langle\vec{\varepsilon}_{\matP}\of{\we}, ({J_{\xe}}^{-1}-1)
    \vec{\varepsilon}_{\matP}\of{\ve} \big\rangle_{\halfspace} , \quad
\textrm{II}
= \big\langle \vec{\varepsilon}_{\matP}\of{\we}, 
    \vec{\varepsilon}_{\matP}\of{\ve} - \vec\varepsilon(\ve) \big\rangle_{\halfspace} ,  \\
\textrm{III} = \pair{\vec{\varepsilon}_{\matP}\of{\we} 
    -\gradS{\we}, \gradS{\ve}}_{\halfspace} .
\end{gather*}
We recall that $\vec\varepsilon_{\matP}(\we)$ is defined in \eqref{eq:hat_varTheta} and
assume for the remainder of this proof that $\normSZ{\ve}1{r'}\halfspace = 1$. 
We can readily estimate \textrm{I} using \eqref{eq:piola_sym_grad_bound} and \eqref{eq:omega_estimate_c}
\begin{align*}
\abs{\textrm{I}}        
&\leq C \normLi{1 - \det{\grad_{\xe}\Psi}}{\Ue}\normSZ{\we}1r{\Ue} 
\leq C \delta^{1-n/s}\normSZ{\we}1r{\Ue}.    
\end{align*}
The constant $C$ depends on $n$, $r$, $s$, $\Ue$, $\normS{\Psi}2s{\bbR^{n}_{-}}$ but is independent of $\delta$. Next note that $\mathfrak{P}_{\vec P}(\vec M)= \vec P \vec M \vec P^{-1}$ for any matrix $\matM$, according to \eqref{eq:frakP}, whence 
\begin{align*}
\mathfrak{P}_{\vec P}(\vec M) - \matM 
& = \grad_{\xe} \Psi\matM \del{\grad_{\xe} \Psi} ^{-1} - \matM \\
& = \del{\grad_{\xe} \Psi - \matI}\matM \del{\grad_{\xe} \Psi}^{-1} 
+ \matM \del{\grad_{\xe} \Psi}^{-1}\del{\matI - \grad_{\xe} \Psi}.
\end{align*}
We recall that \eqref{eq:loc_diff} reads $\widetilde{\calE} \omega = \Psi - \calI$, whence $\norm{\calI - \grad_\xe \Psi}_{L^\infty(\Ue)} = |\widetilde{\calE}\omega|_{W^\infty(\Ue)}$, and apply the preceding expression in conjunction with \eqref{eq:omega_estimate_b} to bound the remaining terms \textrm{II} and \textrm{III} as follows:
\begin{align*}
\abs{\textrm{II}}, \ \abs{\textrm{III}}      
\le C\delta^{1-n/s}\normSZ{\we}1r{\Ue}.
\end{align*}
Collecting the estimates for \textrm{I}, \textrm{II} and \textrm{III}, and using the invertibility of $\calS_{\bbR^n_{-}}$, we obtain
\begin{align*}
\norm{\calB\of{\we, \pi}}_{\dual{X_{r'}\of{\halfspace}}} 
&\leq C \delta^{1-n/s}\norm{\of{\we, \pi}}_{X_{r}\of{\halfspace}} 
  = C \delta^{1-n/s}\norm{\calS_\halfspace^{-1}\calS_\halfspace\of{\we, \pi}}_{X_{r}\of{\halfspace}}  \\
&\leq C \delta^{1-n/s}
\norm{\calS_\halfspace^{-1}}_{\calL\of{\dual{X_{r'}\of{\halfspace}}, X_{r}\of{\halfspace}}} 
\norm{\calS_\halfspace\of{\we, \pi}}_{\dual{X_{r'}\of{\halfspace}}},
\end{align*}
where $C$ depends on $n$, $r$, $s$, $\Ue$ and $\normS{\Psi}2s{\bbR^{n}_{-}}$. 
By choosing $\delta$ small enough we satisfy the assumption of  
Lemma \ref{lem:Kato} and conclude the first part of our theorem.

To prove the second part involving further regularity in $X_{t}\of{\Qe}$ for $t > r$ we simply follow Galdi-Simader-Sohr \cite[p. 159] {GPGaldi_CGSimader_HSohr_1994}. This concludes the proof. 
\end{proof}

\subsection{$\calS_\Omega$ has finite index}
\label{s:decomposition}

In this section we prove that $\calS_\Omega$ has finite index or
equivalently, according to Lemma \ref{lem:lax_c}, that $\calS_\Omega$ has a pseudo-inverse.

\begin{lemma}[domain decomposition] 
\label{lem:dd}
Let $\Omega \subset \bbR^n$ be a bounded domain of class $W^{2-1/s}_{s}$.
There exists a finite open covering of 
$            \clos\Omega \subseteq \bigcup_{i=1}^k B\of{\vec x_i, \delta_i / 2}$, 
such that 
\begin{enumerate}[(i)]
 \item if $\vec x_i \in \Omega$ then $B\of{\vec x_i, \delta_i/2} \cap \bdy\Omega = \emptyset$.
 \item if $\vec x_i \in \bdy\Omega$ then the associated local
   $W^2_s$-diffeomorphism $\Psi_i$ is a bijection between
   $\Ue(\xe_i,\delta) = \supp\of{\calI - \Psi_i} 
   \subset \Be_{-}(\xe_i,\delta)$ and $U(\vec x_i,\delta) =
   \Psi_i(\Ue(\xe_i,\delta))$ with the disc $\De\of{\xe_i', \delta_i/2}$ being
   mapped to $\bdy\Omega \cap B\of{\vec x_i, \delta_i/2}$ (see \figref{f:RefMap}),       
i.e. it flattens the boundary of $\Omega$ near $\vec x_i$.
 \item the (perturbed) Stokes operator $\widetilde\calS_{i}$ is invertible. 
\end{enumerate}
\end{lemma}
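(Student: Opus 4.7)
The plan is to build an admissible ball around each $\vec x \in \clos{\Omega}$ satisfying (i)--(iii) individually, and then extract a finite subcover by compactness of $\clos{\Omega}$. At each point I choose a candidate radius $\delta(\vec x) > 0$ small enough to accommodate three requirements simultaneously: the local geometric construction, the hypothesis of Corollary \ref{cor:omega_diffeomorphism}, and the smallness threshold that Theorem \ref{thm:hs_perturbed_wellposed} imposes for invertibility of $\widetilde\calS$.

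For an interior point $\vec x \in \Omega$, I set $\delta(\vec x) := \dist(\vec x, \bdy\Omega)$, which gives $B(\vec x, \delta(\vec x)/2) \subset \Omega$, so (i) holds and (ii) is vacuous. In this case Definition \ref{def:loc_op} gives $\Psi = \calI$ and $\widetilde\calS = \calS_{\bbR^n}$, whose invertibility (iii) is Theorem \ref{thm:rn_wellposed}.

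For a boundary point $\vec x \in \bdy\Omega$, I invoke the closing sentence of Definition \ref{defn:sobolev_domain} --- namely that every bounded $W^{2-1/s}_s$ domain is uniform --- to obtain a single threshold $\delta_0 > 0$, depending only on $\bdy\Omega$, below which the graph representation $\omega$ together with the normalizations \eqref{eq:omega_sobolev_gradient}--\eqref{eq:omega_sobolev_integral} are simultaneously valid and $\|\omega\|_{W^{2-1/s}_s(\De(\xe', \delta_0))}$ is uniformly bounded in $\vec x$. For any $\delta(\vec x) \le \delta_0$, Corollary \ref{cor:omega_diffeomorphism} furnishes the local $W^2_s$-diffeomorphism $\Psi_{\vec x}$ satisfying (P\ref{P1})--(P\ref{P3}), which is exactly (ii). I then shrink further and take $\delta(\vec x) := \min\{\delta_0,\, C(n, r, s, \bdy\Omega)\}$, using the threshold $C$ from Theorem \ref{thm:hs_perturbed_wellposed}, to also secure invertibility of $\widetilde\calS_{\vec x}$ and hence (iii).

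The family $\{B(\vec x, \delta(\vec x)/2)\}_{\vec x \in \clos{\Omega}}$ is an open cover of the compact set $\clos{\Omega}$, so Heine--Borel yields the desired finite subcover $\{B(\vec x_i, \delta_i/2)\}_{i=1}^k$; each selected ball inherits (i)--(iii) from the pointwise construction. The main point requiring care, and the principal potential obstacle, is checking that both the admissibility threshold in Corollary \ref{cor:omega_diffeomorphism} and the constant $C$ in Theorem \ref{thm:hs_perturbed_wellposed} are genuinely uniform in $\vec x \in \bdy\Omega$: both reduce to uniform control of $\|\omega\|_{W^{2-1/s}_s}$ on patches of a fixed size, which is precisely what uniformity of the $W^{2-1/s}_s$ domain provides. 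Once this uniformity is recorded, the finite extraction is immediate and the conclusion follows.
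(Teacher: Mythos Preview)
Your proof is correct and follows the same approach as the paper's: assign to each $\vec x \in \clos\Omega$ an admissible radius $\delta(\vec x)$ via Definition~\ref{def:loc_op} and Theorem~\ref{thm:hs_perturbed_wellposed}, then extract a finite subcover by compactness of $\clos\Omega$. The paper's two-line proof leaves implicit the interior/boundary split and the uniformity you discuss; note, though, that compactness only needs a positive $\delta(\vec x)$ at each point, not a uniform one, so your uniformity check is extra caution rather than a logical necessity.
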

\begin{proof} Since $\clos\Omega$ is compact, the trivial covering generated by the $\set{B\of{\vec x_i, \delta_i/2}}$, 
with $\vec x_i \in \clos\Omega$ and $\delta_i$ computed in \thmref{thm:hs_perturbed_wellposed}, has a finite sub-covering. 
Results (i)-(iii) follow immediately.
\end{proof}

We subordinate to the finite covering of \lemref{lem:dd} the following set of functions:
\begin{enumerate}[(a)]
    \item a smooth partition of unity $\set{\varphi_i}_{i=1}^k$ of $\clos\Omega$, i.e. $\varphi_i$ in $\CCinf{B\of{\vec x_i, \delta_i/2}}$ with $0 \leq \varphi_i \leq 1$,  $\sum_{i=1}^k \varphi_i\of{\vec x} = 1$ for every $\vec x \in \clos\Omega$. 
 \item
     smooth characteristic functions $\seq{\varrho_i}_{i=1}^{k}$ of $B\of{\vec x_i, \delta_i/2}$ with support on $B\of{\vec x_i, \delta_i}$, i.e. $\varrho_i \in \CCinf{B\of{\vec x_i, \delta_i}}$ with $\varrho_i = 1$ on $B\of{\vec x_i, \delta_i/2}$.
\end{enumerate}

\begin{lemma}[space decomposition of $X_r\of{\Omega}$] 
\label{lem:space_decomposition}
Let $s' \leq r \leq s$. The following identities hold
    \[
         \calI_{X_r\of{\Omega}} = \sum_{i=1}^k \calR_{\varphi_i}\calE_{\varrho_i}
         ,\quad 
         \calI_{\dual{X_r\of{\Omega}}} = \sum_{i=1}^k \dual\calE_{\varrho_i}\dual{\calR}_{\varphi_i}.
    \]
As a result we may decompose $X_r\of\Omega$ and $\dual{X_{r}\of\Omega}$ as follows:
    \[
        X_r\of{\Omega} =\sum_{i=1}^k \calR_{\varphi_i} X_r\of{\Qe_i}
        ,\quad 
        \dual{X_r\of{\Omega}} = \sum_{i=1}^k \dual{\calE}_{\varphi_i} \dual{X_r\of{\Qe_i}} , 
    \]
where $\Qe_i = \bbR^n$ or $\bbR^n_{-}$. 
\end{lemma}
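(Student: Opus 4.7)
The plan is to reduce both identities to a direct computation based on the isomorphism $\calP_i\calP_i^{-1} = \calI$ from Theorem~\ref{thm:bhs_isomorphic_domains} together with the compatibility of the two cutoff families. First I would unfold the definitions in Definition~\ref{def:loc_op} to obtain, for any $(\vec v, q) \in X_r(\Omega)$,
\[
\calR_{\varphi_i}\calE_{\varrho_i}(\vec v, q)
= \varphi_i\,\calP_i\bigl(\calP_i^{-1}(\varrho_i \vec v, \varrho_i q)\bigr)
= (\varphi_i \varrho_i \vec v,\, \varphi_i \varrho_i q).
\]
The key observation is that by construction (a)--(b), $\varrho_i \equiv 1$ on $B(\vec x_i, \delta_i/2) \supset \operatorname{supp}\varphi_i$, so $\varphi_i \varrho_i = \varphi_i$, and the right-hand side collapses to $\varphi_i(\vec v, q)$. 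Summing over $i$ and invoking $\sum_i \varphi_i \equiv 1$ on $\clos\Omega$ yields the first identity. The second identity is then obtained by taking the Banach-space adjoint, using $(\calR_{\varphi_i}\calE_{\varrho_i})^* = \calE_{\varrho_i}^*\calR_{\varphi_i}^*$ and $\calI^* = \calI$.

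The two space decompositions follow at once. Given $(\vec v, q) \in X_r(\Omega)$, the first identity exhibits it as $\sum_i \calR_{\varphi_i}\bigl[\calE_{\varrho_i}(\vec v, q)\bigr]$ with each $\calE_{\varrho_i}(\vec v, q) \in X_r(\Qe_i)$ by Lemma~\ref{lem:cont_RE}, so $(\vec v, q) \in \sum_i \calR_{\varphi_i} X_r(\Qe_i)$; the reverse inclusion is free from the mapping property $\calR_{\varphi_i}\colon X_r(\Qe_i) \to X_r(\Omega)$. The dual case is analogous, using $\calE_{\varrho_i}^*\colon \dual{X_r(\Qe_i)} \to \dual{X_r(\Omega)}$.

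The only real subtlety I anticipate is verifying that $\calP_i\calP_i^{-1}$ genuinely acts as the identity on the input $(\varrho_i \vec v, \varrho_i q)$. Since $\varrho_i$ is compactly supported in $B(\vec x_i, \delta_i)$ and $Q_i = \Psi_i(\Qe_i)$ coincides with $\Omega$ inside this ball by (P\ref{P2})--(P\ref{P3}), the product $(\varrho_i \vec v, \varrho_i q)$ extends by zero to a bona fide element of $X_r(Q_i)$, which is precisely the space on which Theorem~\ref{thm:bhs_isomorphic_domains} furnishes the isomorphism. Once this is dispatched, the lemma reduces to bookkeeping with cutoffs and a partition of unity.
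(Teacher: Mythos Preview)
Your proposal is correct and mirrors the paper's proof almost verbatim: the paper also writes $(\vec u,p)=\sum_i \varphi_i\varrho_i(\vec u,p)=\sum_i \varphi_i\calP_i\calP_i^{-1}(\varrho_i\vec u,\varrho_i p)=\sum_i\calR_{\varphi_i}\calE_{\varrho_i}(\vec u,p)$, invokes the continuity of $\calE_{\varrho_i}$ and $\calR_{\varphi_i}$ from Lemma~\ref{lem:cont_RE} for the space decomposition, and dismisses the dual case as similar. One cosmetic slip: in the dual decomposition the statement uses $\calE_{\varphi_i}^*$ rather than $\calE_{\varrho_i}^*$, so either swap the roles of $\varphi_i$ and $\varrho_i$ when dualizing or note that both versions follow by the same computation.
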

\begin{proof}
\noindent We begin by proving the relation for $\calI_{X_r\of{\Omega}}$ and simultaneously show the decomposition of $X_r(\Omega)$. 
Let $\of{\vec u, p}$ in $X_r\of{\Omega}$ be fixed but arbitrary. Using that $\varphi_i \varrho_i = \varphi_i$ and $\sum_{i=1}^k \varphi_i = 1$,
Definition~\ref{def:loc_op} implies
\[
\of{\vec u, p} = \sum_{i=1}^{k} \varphi_i\varrho_i\of{\vec u, p} 
= \sum_{i=1}^{k} \varphi_i \calP_i \calP_i^{-1}\of{\varrho_i\vec u, \varrho_i p} 
= \sum_{i=1}^{k} \calR_{\varphi_i}\calE_{\varrho_i}\of{\vec u, p} .              
\]    
This proves the identity relation for $\calI_{X_r\of{\Omega}}$. 

Since the operators $\calE_{\varrho_i} : X_r(\Omega) \rightarrow X_r(\Qe_i)$
are continuous, due to \lemref{lem:cont_RE}, it follows that 
the ``vector'' $(\calE_{\varrho_i}\of{\vec u, p})_{i=1}^k \in \Pi_{i=1}^k X_r\of{\Qe_i}$. 
Conversely, if $\{{\of{\ue_i, \pe_i}} \}_{i=1}^k \in \Pi_{i=1}^k X_r(\Qe_i)$, 
then the continuity of $\calR_{\varphi_i} : X_r(\Qe_i) \rightarrow X_r(\Omega)$, implies $\calR_{\varphi_i}(\ue_i, \pe_i) \in X_r(\Omega)$ and 
$\sum_{i=1}^k \calR_{\varphi_i}(\ue_i, \pe_i) \in X_r(\Omega)$ because
$k$ is finite. This implies the decomposition of $X_r(\Omega)$. 

The remaining decompositions of $\calI_{\dual{X_r\of{\Omega}}}$ and
$\dual{X_r\of{\Omega}}$ can be proven similarly.
\end{proof}

\begin{theorem}
[pseudoinverse of $\calS_\Omega$] 
\label{thm:pinvS}
Let $s' \leq r \leq s$, $\calS_\Omega : X_r(\Omega) \rightarrow X_{r'}(\Omega)^*$ be Stokes operator defined in \eqref{eq:stokes_abstract}, and $\widetilde\calS_i : X_r(\Qe_i) \rightarrow X_{r'}(\Qe_i)^*$ be the perturbed Stokes operator defined in Propositions~\ref{prop:sslip_interior_localization} and \ref{prop:sslip_boundary_localization}. The operator $\calS_\Omega^\dagger : \dual{X_{r'}\of{\Omega}} \to X_{r}\of\Omega$ 
    \[
        \calS_\Omega^\dagger :=  \sum_{i=1}^{k} 
            \calR_{\varrho_i}\widetilde\calS_{i}^{-1}\dual{\calR}_{\varphi_i} 
    \]
is a pseudoinverse of $\calS_\Omega$.
\end{theorem}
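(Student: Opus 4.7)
The plan is to compute $\calS_\Omega \calS_\Omega^\dagger$ and $\calS_\Omega^\dagger \calS_\Omega$ directly, insert the localization decompositions from Propositions~\ref{prop:sslip_interior_localization} and \ref{prop:sslip_boundary_localization}, and match each leading term against the resolution of the identity furnished by Lemma~\ref{lem:space_decomposition}, while showing every residual term is compact.

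First, I would verify the two missing pieces of the space decomposition. A direct pointwise computation yields
\[
\calR_{\varrho_i}\calE_{\varphi_i}(\vec u, p)
= \varrho_i \,\calP_i \calP_i^{-1}(\varphi_i \vec u, \varphi_i p)
= \varrho_i\varphi_i(\vec u, p) = \varphi_i(\vec u, p),
\]
because $\varrho_i \equiv 1$ on $\supp \varphi_i$. Summing over $i$ and using $\sum_i \varphi_i \equiv 1$ on $\clos\Omega$ yields $\sum_{i=1}^k \calR_{\varrho_i}\calE_{\varphi_i} = \calI_{X_r(\Omega)}$. Dualizing Lemma~\ref{lem:space_decomposition}, one also gets $\sum_{i=1}^k \calE_{\varrho_i}^*\calR_{\varphi_i}^* = \calI_{\dual{X_{r'}(\Omega)}}$.

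Second, I would apply the boundary localization \eqref{eq:stokes_boundary_decomposition_short} (or \eqref{eq:stokes_interior_decomposition_short} in the interior case, where $\calP_i = \calI$ and $\widetilde\calS_i = \calS_{\bbR^n}$) with cutoff $\zeta = \varrho_i$ to get
\[
\calS_\Omega \calR_{\varrho_i} = \calE_{\varrho_i}^* \widetilde\calS_i + \calK_{\varrho_i} \calP_i,
\]
so that composing with $\widetilde\calS_i^{-1}$ (invertible by Theorem~\ref{thm:hs_perturbed_wellposed}, thanks to the choice of $\delta_i$ in Lemma~\ref{lem:dd}) and summing gives
\[
\calS_\Omega \calS_\Omega^\dagger
= \sum_{i=1}^k \calE_{\varrho_i}^* \calR_{\varphi_i}^*
+ \sum_{i=1}^k \calK_{\varrho_i}\calP_i \widetilde\calS_i^{-1} \calR_{\varphi_i}^*
= \calI_{\dual{X_{r'}(\Omega)}} + \calK,
\]
where $\calK$ is a finite sum of operators each containing the compact factor $\calK_{\varrho_i}$ (Lemma~\ref{lem:k_compact}) sandwiched between bounded operators, hence compact. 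The symmetric computation, based on $\calR_{\varphi_i}^* \calS_\Omega = \widetilde\calS_i \calE_{\varphi_i} + \calP_i^* \calK_{\varphi_i}$ from the same propositions, yields
\[
\calS_\Omega^\dagger \calS_\Omega
= \sum_{i=1}^k \calR_{\varrho_i}\calE_{\varphi_i}
+ \sum_{i=1}^k \calR_{\varrho_i}\widetilde\calS_i^{-1} \calP_i^* \calK_{\varphi_i}
= \calI_{X_r(\Omega)} + \calC,
\]
with $\calC$ compact for the same reason.

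The only nontrivial obstacle I anticipate is bookkeeping in the boundary case, where $\widetilde\calS_i = \calS_{\bbR^n_-} + \calB_i$ carries the extra perturbation $\calB_i$ from Proposition~\ref{prop:sslip_boundary_localization}; however, $\calB_i$ is absorbed into $\widetilde\calS_i$ itself (which is invertible by Theorem~\ref{thm:hs_perturbed_wellposed}), so it never appears as a standalone term and does not contribute to either the identity or the compact parts. Continuity of $\calR_{\varphi_i}$, $\calE_{\varrho_i}$, $\calP_i$ (Lemma~\ref{lem:cont_RE}, Theorem~\ref{thm:bhs_isomorphic_domains}) together with boundedness of $\widetilde\calS_i^{-1}$ ensures all composition factors are bounded, so the ideal property of compact operators closes the argument.
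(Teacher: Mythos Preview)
Your approach matches the paper's, but you have dropped a term from the boundary localization. Equation~\eqref{eq:stokes_boundary_decomposition_short} reads
\[
\dual{\calR_{\zeta}}\calS_\Omega = (\widetilde\calS + \calC)\,\calE_{\zeta} + \dual{\calP}\calK_{\zeta},
\qquad
\calS_\Omega\calR_{\zeta} = \dual\calE_{\zeta}(\widetilde\calS + \calC) + \calK_{\zeta}\calP,
\]
with \emph{two} perturbations: $\calB$, which is indeed absorbed into $\widetilde\calS = \calS_{\halfspace}+\calB$ and handled by Theorem~\ref{thm:hs_perturbed_wellposed}, and a separate operator $\calC$ defined in \eqref{eq:s_decomp_ci}, which is \emph{not} absorbed. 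Your displayed identities $\calS_\Omega\calR_{\varrho_i} = \calE_{\varrho_i}^*\widetilde\calS_i + \calK_{\varrho_i}\calP_i$ and $\calR_{\varphi_i}^*\calS_\Omega = \widetilde\calS_i\calE_{\varphi_i} + \calP_i^*\calK_{\varphi_i}$ are therefore missing the $\calC_i$ contribution, and your final paragraph conflates $\calB_i$ with $\calC_i$.

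The repair is immediate and does not change the structure of your argument: the correct residual in $\calS_\Omega^\dagger\calS_\Omega - \calI_{X_r(\Omega)}$ is
\[
\sum_{i=1}^k \calR_{\varrho_i}\widetilde\calS_i^{-1}\bigl(\calC_i\calE_{\varphi_i} + \calP_i^*\calK_{\varphi_i}\bigr),
\]
and similarly for the other composition. Since $\calC_i$ is compact by Lemma~\ref{lem:C_compact} (and $\calK_{\varphi_i}$ by Lemma~\ref{lem:k_compact}), each summand is a bounded operator composed with a compact one, hence compact, and the conclusion follows as you wrote. This is exactly how the paper proceeds.
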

\begin{proof} 
To simplify the notation take $\calC_i = 0$ whenever $\vec x_i \in \Omega$. In view of Propositions~\ref{prop:sslip_interior_localization} and \ref{prop:sslip_boundary_localization}, we can write 
    \begin{align*}
        \calS_\Omega^\dagger \calS_\Omega 
            &= \sum_{i=1}^k  
                \calR_{\varrho_i}\widetilde\calS_{i}^{-1}
                        \of{\widetilde S_{i}\calE_{\varphi_i} + \calC_i\calE_{\varphi_i} 
                      + \dual{\calP}_{i}\calK_{\varphi_i}} \\
            &= \sum_{i=1}^k \calR_{\varrho_i} \calE_{\varphi_i} 
             + \sum_{i=1}^{k}\calR_{\varrho_i}\widetilde\calS_{i}^{-1}\of{\calC_i\calE_{\varphi_i} + \dual{\calP}_{i}\calK_{\varphi_i}} \\
            &= \calI_{X_r\of{\Omega}}  
               + \sum_{i=1}^k  \calR_{\varrho_i}\widetilde\calS_{i}^{-1}\of{\calC_i\calE_{\varphi_i} + \dual{\calP}_{i}\calK_{\varphi_i}}. 
    \end{align*}
Since $\calC_i$ and $\calK_{\varphi_i}$ are compact, according to Lemmas~\ref{lem:k_compact} and \ref{lem:C_compact}, and so is
$\calS_\Omega^\dagger \calS_\Omega - \calI_{X_r\of{\Omega}}$ \cite[Chapter 21: Theorem 1]{PDLax_2002}. The proof that $\calS_\Omega \calS_\Omega^\dagger - \calI_{\dual{X_{r}\of{\Omega}}}$ is compact follows along the same lines.
\end{proof}

\subsection{$\calS_\Omega$ and $\dual\calS_\Omega$ are injective}
\label{s:stokes_slip_injective}

In view of our strategy to use Corollary~\ref{cor:h_r_s} to infer the invertibility of the Stokes operator $\calS_\Omega$, it is essential to prove the injectivity of $\calS_\Omega$ and $\dual\calS_\Omega$. This is precisely the aim of this section. We proceed by making an immediate observation. 

\begin{proposition}
\label{prop:s_omega_is_injective} 
Let $2 \leq r \leq s$. The Stokes operator $\calS_\Omega: X_r\of{\Omega} \to \dual {X_{r'}\of\Omega}$ is injective.
\end{proposition}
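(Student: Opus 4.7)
My plan is to leverage the Hilbert space well-posedness already established in Theorem~\ref{thm:hilbert_case} together with the elementary Sobolev/Lebesgue embeddings available on a bounded domain. The point is that for $r \ge 2$ the space $X_r(\Omega)$ sits inside $X_2(\Omega)$, while the test space $X_{r'}(\Omega)$ (with $r' \le 2$) \emph{contains} $X_2(\Omega)$; this double inclusion allows the $r=2$ uniqueness statement to absorb the entire range $2\le r\le s$.

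More concretely, I would argue as follows. Suppose $(\vec u,p)\in X_r(\Omega)$ satisfies $\calS_\Omega(\vec u,p)=0$ in $\dual{X_{r'}(\Omega)}$, i.e.
\begin{equation*}
\calS_\Omega(\vec u,p)(\vec v,q)=0\qquad\forall\,(\vec v,q)\in X_{r'}(\Omega).
\end{equation*}
Since $\Omega$ is bounded and $r\ge 2$, one has $W^1_r(\Omega)\hookrightarrow W^1_2(\Omega)$ and $\Lr{\Omega}\hookrightarrow\Ltwo{\Omega}$, which together with the trace condition $\vec v\cdot\vnu=0$ and the quotient by $Z(\Omega)$ yield the continuous embedding $X_r(\Omega)\hookrightarrow X_2(\Omega)$; hence $(\vec u,p)\in X_2(\Omega)$. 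Dually, since $r'\le 2$, we have $X_2(\Omega)\hookrightarrow X_{r'}(\Omega)$, so the vanishing of $\calS_\Omega(\vec u,p)$ on $X_{r'}(\Omega)$ restricts to vanishing on the (smaller) space $X_2(\Omega)$:
\begin{equation*}
\calS_\Omega(\vec u,p)(\vec v,q)=0\qquad\forall\,(\vec v,q)\in X_2(\Omega).
\end{equation*}
Thus $(\vec u,p)\in X_2(\Omega)$ solves the homogeneous Stokes problem in the Hilbert setting.

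At this point the result is immediate from Theorem~\ref{thm:hilbert_case}: well-posedness in $X_2(\Omega)\times\dual{X_2(\Omega)}$ forces the unique solution corresponding to the zero datum to be $(\vec u,p)=(\vec 0,0)$. I do not foresee a genuine obstacle here --- the only thing to be careful about is that the embedding $X_r(\Omega)\hookrightarrow X_2(\Omega)$ is compatible with the quotient by $Z(\Omega)$ and with the essential boundary condition $\vec v\cdot\vnu=0$, but both pass to the embedding trivially because $Z(\Omega)$ is finite-dimensional and the trace inequality is monotone in the Sobolev exponent on a Lipschitz (in fact $W^{2-1/s}_s$) bounded domain. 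The parallel injectivity statement for $\dual{\calS_\Omega}$ will be handled separately, using the same symmetric roles of $r$ and $r'$ once duality is available.
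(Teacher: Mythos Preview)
Your proposal is correct and follows essentially the same approach as the paper: use the boundedness of $\Omega$ to embed $X_r(\Omega)\hookrightarrow X_2(\Omega)$ for $r\ge 2$, then invoke the Hilbert-space well-posedness of Theorem~\ref{thm:hilbert_case} to force $(\vec u,p)=(\vec 0,0)$. You are in fact slightly more explicit than the paper in noting the dual inclusion $X_2(\Omega)\hookrightarrow X_{r'}(\Omega)$, which is what guarantees that vanishing against $X_{r'}$ test pairs implies vanishing against $X_2$ test pairs.
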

\begin{proof} Owing to \thmref{thm:hilbert_case}, if $(\vec u,p) \in X_2\of\Omega$ solves $\calS_\Omega(\vec u,p) = 0$, 
then $(\vec u,p) = (\vec 0,0)$. To prove the assertion for $r > 2$ we use that $\Omega$ is bounded so that 
$\cembed{X_r\of\Omega}{X_2\of\Omega}$, and as a consequence any solution $\del{\vec u, p} \in X_r\of\Omega$ of 
$\calS_\Omega(\vec u, p) = 0$  necessarily belongs to $X_2\of\Omega$. We conclude that $\calS_\Omega$ is injective 
for $2 \leq r \leq s$. 
\end{proof}

To prove that $\calS_\Omega$ is injective for $s' \leq r < 2$ we will develop an induction argument; for a somewhat related result see \cite[pg. 159] {GPGaldi_CGSimader_HSohr_1994}. It seems that the induction argument is rooted in the work introduced by Moser in the context of elliptic PDE \cite{JMoser_1960a}\cite[Section 8.5]{DGilbarg_NTrudinger_2001a}. Leveraging the boundedness of $\Omega$ it is sufficient to prove this argument for $r=s'$. We will show in a finite number of steps that a homogeneous solution in $X_{s'}\of\Omega$ is in fact in $X_{t}\of\Omega$ for some $t \geq 2$.

\begin{definition}[smoothing sequence] 
\label{def:smooth_seq}
Let $s > n$ and $t_{-1} = r = s'$. We introduce a smoothing sequence  
    \begin{align*}
        \frac 1 {t_0} &:= 1 - \frac 2 {s+n} \\
        \frac 1 {t_m} &:= \frac 1 {t_{m-1}} + \frac{1}{s} - \frac 1 n \quad  \mbox{for } m=1,\ldots, M
    \end{align*}
where $M = \ceil{\del{\frac{1}{n}-\frac{1}{s}}^{-1}\del{\frac{1}{2}-\frac{2}{s+n}}}$ guarantees that $t_M \geq 2$. 
\end{definition}

\begin{remark}[properties of $t_m$]\label{rem:tmprime} \rm
We observe that $t_m$ is monotone increasing because $s>n$. 
Moreover, $t_{m-1} < 2 \le n$ implies $t_m' < n$ for $m \geq 1$. In fact,
the final conclusion is due to the definitions of $t_m$ and $t_m'$, namely
\[
\frac{1}{t_m'} = 1 - \frac{1}{t_m} =
\frac{1}{s'} - \frac{1}{t_{m-1}} + \frac{1}{n},
\]
and that $t_{m-1}$ is monotone increasing with $t_{m-1} > s' = t_{-1}$.
\end{remark}

\begin{lemma}[Sobolev embedding] 
\label{lem:injectivity_embedding_s}
Let $\calD \subset \bbR^n$ be a bounded Lipschitz domain. The
following holds for $m \geq 0$ and $t_{m-1} < 2 \le n$ 
   \begin{align*}
        \cembed{\sob{1}{t_{m-1}}\calD}{L^{t_m}\of{\calD}} 
        ,\quad 
        \cembed{\sob{1}{t_m'}\calD }{L^{t_{m-1}'}\of{\calD}} . 
   \end{align*}
Equivalently, for every $u$ in $\sob{1}{t_{m-1}}\calD$ and $v$ in $\sob1{t_m'}\calD$,
    \begin{equation}\begin{aligned}\label{eq:embd}
        \normL{u}{t_m}{\calD} 
            &\leq C_{m,n,s,\calD}\normS{u}1{t_{m-1}}\calD \\
        \normL{v}{t_{m-1}'}{\calD}
            &\leq C_{m,n,s,\calD}\normS{v}1{t_{m}'}\calD . 
    \end{aligned}\end{equation}
\end{lemma}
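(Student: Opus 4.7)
The lemma is a direct application of the Sobolev embedding theorem on bounded Lipschitz domains; the only substantive work is arithmetic verification of the exponent inequalities implicit in Definition \ref{def:smooth_seq}. My plan is to split the argument by embedding and by the cases $m=0$ versus $m\ge 1$, and to invoke Morrey's theorem where the Sobolev index lies above $n$.

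For the first embedding $\sob{1}{t_{m-1}}\calD\hookrightarrow L^{t_m}\calD$, the hypothesis $t_{m-1}<2\le n$ places us strictly below the critical index, so by the classical Sobolev embedding it suffices to verify $\frac{1}{t_m}\ge \frac{1}{t_{m-1}}-\frac{1}{n}$. For $m\ge 1$ the defining relation immediately yields
\[
\frac{1}{t_m}-\Bigl(\frac{1}{t_{m-1}}-\frac{1}{n}\Bigr)=\frac{1}{s}>0,
\]
so the embedding holds with room to spare. For $m=0$ the required inequality reduces to $\frac{1}{s}+\frac{1}{n}\ge \frac{2}{s+n}$, equivalent to $(s+n)^2\ge 2sn$, which is trivial.

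For the second embedding $\sob{1}{t_m'}\calD\hookrightarrow L^{t_{m-1}'}\calD$, I would distinguish two regimes according to whether $t_m'$ is below or above $n$. For $m\ge 1$, Remark \ref{rem:tmprime} supplies $t_m'<n$, and the dual rearrangement
\[
\frac{1}{t_{m-1}'}-\Bigl(\frac{1}{t_m'}-\frac{1}{n}\Bigr)=\frac{1}{s}>0
\]
again invokes the classical Sobolev embedding. For $m=0$ one computes $\frac{1}{t_0'}=1-\frac{1}{t_0}=\frac{2}{s+n}$, hence $t_0'=(s+n)/2>n$ because $s>n$; here Morrey's theorem gives $\sob{1}{t_0'}\calD\hookrightarrow C^0(\overline{\calD})$, and the boundedness of $\calD$ embeds $C^0(\overline{\calD})$ continuously into every $L^q\calD$, in particular into $L^{t_{-1}'}\calD=L^s\calD$.

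The quantitative estimates \eqref{eq:embd} are just the continuity statements of the two embeddings above. No step presents a conceptual obstacle; the only mild subtlety is the $m=0$ case of the second embedding, where $t_0'$ lies above rather than below $n$, forcing us to substitute Morrey's theorem for the fractional Sobolev inequality. Every other case is routine exponent bookkeeping that follows directly from Definition \ref{def:smooth_seq}.
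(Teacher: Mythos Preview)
Your proposal is correct and follows essentially the same approach as the paper: both arguments split into the cases $m=0$ and $m\ge 1$, verify the exponent inequality $\frac{1}{t_m}\ge \frac{1}{t_{m-1}}-\frac{1}{n}$ (respectively its dual) via the defining recursion, and handle the $m=0$ second embedding by observing $t_0'=(s+n)/2>n$ and invoking Morrey's theorem. The only difference is organizational---you group by embedding rather than by case---but the computations and key observations coincide.
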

\begin{proof} We split the proof into two cases. We recall the Sobolev number $t^*$ associated with $t < n$: 
    \[
        \frac{1}{t^*} = \frac{1}{t} - \frac{1}{n} . 
    \] 

     \noindent \boxed{1} Case $m = 0$:
            Since $t_{-1} = s' < 2 \leq n$, ${\sob{1}{t_{-1}}\calD} \hookrightarrow {L^{t_0}\of{\calD}}$ if 
            $t_{0} \leq t_{-1}^*$. This is true because 
            \[
                \frac{1}{t_0} - \frac{1}{t^*_{-1}} 
                    = 1 - \frac{2}{s+n} - \del{\frac{1}{t_{-1}} - \frac 1 n}
                    = \frac{1}{s} + \frac 1 n - \frac{2}{s+n}
                    = \frac{n^2 + s^2}{sn\of{s+n}} > 0. 
            \]
            For the second embedding we note $t_{-1}' = s$, and $t_{0}' = (s+n)/2 > n$, thus we have 
            $\cembed{\sob{1}{t_{0}'}D}{{L^{t_{-1}'}\of{D}}}.$

        \noindent \boxed{2} Case $m \geq 1$:
            Since $t_{m-1} < 2 \leq n$ and $\cembed{\sob1{t_{m-1}}D}{L^{t^*_{m-1}}(D)}$, to get the first
            embedding, it suffices to verify  $t_m \leq t_{m-1}^*$:
            \[
               \frac{1}{t_m} - \frac{1}{t_{m-1}^*} 
                   = \frac{1}{t_{m-1}} + \frac{1}{s} - \frac{1}{n} - \del{\frac{1}{t_{m-1}} - \frac{1}{n}} 
                   = \frac{1}{s} \geq 0.
            \]
        Using Definition~\ref{def:smooth_seq}, we deduce $t_{m-1}' \leq (t_m')^*$ whence $\cembed{W^{1}_{t_m'}(D)}{L^{t_{m-1}'}(D)}$. This completes the proof.
\end{proof}

\begin{lemma}[interior regularity of homogeneous solution] 
\label{lem:interior_regularity}
Let $\vec x \in \Omega$. 
If $\of{\vec u, p} \in X_{t_{m-1}}\of{\Omega}$ satisfies $\calS_\Omega(\vec u,p) = 0$, then 
$\calE_{\zeta}\of{\vec u, p} \in X_{t_{m}}\of{\bbR^n}$.
\end{lemma}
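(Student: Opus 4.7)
The plan is to exploit the fact that an interior point $\vec x\in\Omega$ corresponds to the trivial case of Definition~\ref{def:loc_op}, where $\Psi=\calI$, $\calP=\calI$, and $Q=\Qe=\bbR^n$. In particular, $\calE_{\zeta}(\vec u,p)=(\zeta\vec u,\zeta p)$ lies \emph{a priori} in $X_{t_{m-1}}\of{\bbR^n}$ since $\zeta$ is compactly supported in $U=B(\vec x,\delta)\subset\Omega$. Applying Proposition~\ref{prop:sslip_interior_localization} to the identity $\calS_\Omega(\vec u,p)=0$ yields
\[
  \calS_{\bbR^n}(\zeta\vec u,\zeta p) \;=\; -\calK_{\zeta}(\vec u,p)
\]
as an identity in $\dual{X_{t_{m-1}'}\of{\bbR^n}}$. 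The regularity-improving part of Theorem~\ref{thm:rn_wellposed} then tells us that if we can prove $\calK_{\zeta}(\vec u,p)\in \dual{X_{t_m'}\of{\bbR^n}}$, the solution $(\zeta\vec u,\zeta p)$ must upgrade itself to $X_{t_m}\of{\bbR^n}$, which is exactly the conclusion.

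The substance of the proof is therefore the estimate on $\calK_{\zeta}(\vec u,p)$ as a functional over test pairs $(\vec v,q)\in X_{t_m'}\of{\bbR^n}$, working from formula \eqref{eq:stokes_localized_fi}. The four integrands are all supported in the bounded Lipschitz set $U$, so the Sobolev embeddings of Lemma~\ref{lem:injectivity_embedding_s} apply. I would pair the terms according to the duality that is already built into the definition of the smoothing sequence:
\[
  \textstyle |\pair{p,\grad\zeta\cdot\vec v}_U|\lesssim \|p\|_{L^{t_{m-1}}(U)}\|\vec v\|_{L^{t_{m-1}'}(U)},
  \quad |\pair{\grad\zeta\cdot\vec u,q}_U|\lesssim \|\vec u\|_{L^{t_m}(U)}\|q\|_{L^{t_m'}(U)},
\]
and, using \eqref{eq:stokes_interio_theta_estimate} to control $\vec{\mathfrak e}_\zeta$ by the underlying vector field,
\[
  \textstyle |\pair{\vec{\mathfrak e}_\zeta(\vec u),\gradS{\vec v}}_U|\lesssim\|\vec u\|_{L^{t_m}(U)}\|\gradS{\vec v}\|_{L^{t_m'}(U)},\quad |\pair{\gradS{\vec u},\vec{\mathfrak e}_\zeta(\vec v)}_U|\lesssim \|\gradS{\vec u}\|_{L^{t_{m-1}}(U)}\|\vec v\|_{L^{t_{m-1}'}(U)}.
\]
The second embedding of Lemma~\ref{lem:injectivity_embedding_s} promotes $\|\vec v\|_{L^{t_{m-1}'}(U)}\lesssim\|\vec v\|_{W^1_{t_m'}(U)}$, while the first promotes $\|\vec u\|_{L^{t_m}(U)}\lesssim\|\vec u\|_{W^1_{t_{m-1}}(U)}$. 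Summing the four bounds gives
\[
  |\calK_{\zeta}(\vec u,p)(\vec v,q)|\;\lesssim\;\|(\vec u,p)\|_{X_{t_{m-1}}(\Omega)}\,\|(\vec v,q)\|_{X_{t_m'}(\bbR^n)},
\]
so indeed $\calK_{\zeta}(\vec u,p)\in \dual{X_{t_m'}\of{\bbR^n}}$.

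Plugging this into Theorem~\ref{thm:rn_wellposed} finishes the proof: the (unique) solution to $\calS_{\bbR^n}(\we,\pi)=-\calK_{\zeta}(\vec u,p)$ belongs to $X_{t_m}\of{\bbR^n}$, and by the uniqueness clause it must coincide with the known solution $(\zeta\vec u,\zeta p)=\calE_{\zeta}(\vec u,p)$, which is what we wanted. The only subtle point is bookkeeping the H\"older pairings; the exponents $t_m$ and $t_{m-1}'$ in Definition~\ref{def:smooth_seq} are engineered precisely so that every one of the four terms in $\calK_{\zeta}$ closes under the two embeddings of Lemma~\ref{lem:injectivity_embedding_s}, and so that the pressure–divergence pairing (where no embedding is available on the pressure) consumes $p\in L^{t_{m-1}}$ against $\vec v\in L^{t_{m-1}'}$, which is exactly what the improved Sobolev embedding of the test velocity provides.
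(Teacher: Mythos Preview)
Your argument is correct and follows essentially the same route as the paper: reduce via the interior localization identity to $\widetilde\calS\,\calE_\zeta(\vec u,p)=-\calK_\zeta(\vec u,p)$, estimate the four terms of $\calK_\zeta$ with the H\"older pairings and the two Sobolev embeddings of Lemma~\ref{lem:injectivity_embedding_s}, and then invoke the regularity clause of the whole-space Stokes result. The only cosmetic differences are that the paper cites Theorem~\ref{thm:hs_perturbed_wellposed} (which for $\vec x\in\Omega$ collapses to your Theorem~\ref{thm:rn_wellposed} since $\widetilde\calS=\calS_{\bbR^n}$) and makes explicit the passage $\|\vec v\|_{W^1_{t_m'}(U)}\lesssim|\vec v|_{W^1_{t_m'}(\bbR^n)}$ to the homogeneous seminorm that defines $X_{t_m'}(\bbR^n)$, together with a final appeal to Korn's inequality on the $\vec u$ side.
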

\begin{proof} 
Since $\calS_\Omega\of{\vec u, p} = 0$ we have from
\eqref{eq:stokes_interior_decomposition_short} that
\[
\widetilde\calS \calE_{\zeta}(\vec u,p) 
= - \calP^* \calK_{\zeta}(\vec u,p).
\]
As $\calP$ is an isomorphism (cf. \thmref{thm:bhs_isomorphic_domains})
the strategy is show that $\calK_{\zeta}(\vec u,p) \in X_{t_m'}(\bbR^n)^*$ 
provided $(\vec u,p) \in X_{t_{m-1}}(\bbR^n)$ and use  \thmref{thm:hs_perturbed_wellposed} 
to conclude that $\calE_\zeta (\vec u,p) \in X_{t^m}(\bbR^n)$. 
For simplicity we use the notation $U = U(\vec x,\delta)$. 

Let $\of{\vec v, q} \in X_{t_m'}\of{\bbR^n}$. In view of the
definition \eqref{eq:stokes_localized_fi},
we split  $\calK_{\zeta}\of{\vec u, p}$ into four terms
\begin{gather*}
  \textrm{I} = -\pair{p, \grad_{\vec x}\zeta\cdot \vec v}_{U} , \quad 
  \textrm{II} = -\pair{\grad_{\vec x} \zeta \cdot \vec u, q}_{U} , \\
  \textrm{III} = -\pair{\vec{\mathfrak{e}}_\zeta\of{\vec u}, \gradS{\vec v}}_{U}, 
                  \quad 
  \textrm{IV} = \pair{\gradS{\vec u}, \vec{\mathfrak{e}}_\zeta\of{\vec v}}_{U} ,  
\end{gather*}
and estimate them separately. Invoking H\"older's inequality and \eqref{eq:embd}, we obtain 
    \begin{align*}
        \abs{\textrm{I}}  
            &\lesssim \normL{p}{t_{m-1}}{U}
                    \normL{\vec v}{t_{m-1}'}{U}
             \lesssim \normL{p}{t_{m-1}}{U}
                    \normS{\vec v}1{t_{m}'}{U}, \\       
        \abs{\textrm{II}}            
            &\lesssim \normL{\vec u}{t_{m}}{U} 
                    \normL{q}{t_{m}'}{U} 
             \lesssim \normS{\vec u}1{t_{m-1}}{U}  \normL{q}{t_{m}'}{U} .
       \end{align*}
Using H\"older's inequality again, this time in conjunction with
\eqref{eq:stokes_interio_theta_estimate} and \eqref{eq:embd}, we
  see that
       \begin{align*}
        \abs{\textrm{III}}                   
            &\lesssim \normL{\vec u}{t_{m}}{U} 
                    \normL{\gradS{\vec v}}{t_{m}'}{U} 
             \lesssim \normS{\vec u}1{t_{m-1}}{U} 
                    \normS{\vec v}1{t_{m}'}{U} \\ 
        \abs{\textrm{IV}}                             
            &\lesssim \normL{\gradS {\vec u}}{t_{m-1}}{U} 
                    \normL{\vec v}{t_{m-1}'}{U}
             \lesssim \normL{\vec u}{t_{m-1}}{U} 
                    \normS{\vec v}1{t_{m}'}{U} .        
    \end{align*}
    
Since $\normS{\vec v}1{t_{m}'}{U} \lesssim \normS{\vec v}1{t_{m}'}{\Omega} \lesssim \normSZ{\vec v}1{t_{m}'}{\bbR^n}$ the latter being the norm of $V_{t_m'}(\bbR^n)$ according to \eqref{eq:gs_space} and \eqref{eq:banach_space_X_hom}, we deduce 
\[
\abs{\calK_{\zeta}\of{\vec u, p}\of{\vec v, p}}
\lesssim \Big( \normS{\vec u}1{t_{m-1}}{\Omega} + \normL{p}{t_{m-1}}{\Omega} \Big)
\norm{\del{\vec v, q}}_{X_{t_{m}'}(\bbR^n)} . 
\]
Finally, using \lemref{lem:equiv_norm} (Korn's inequality) namely  
$\normL{\vec u}{t_m}{U} \lesssim \normL{\gradS {\vec u}}{t_{m-1}}{\Omega} \lesssim \normSZ{\vec u}1{t_{m-1}}\Omega$, we obtain 
$$
\norm{\calK_{\zeta}\del{\vec u,p}}_{X_{t_m'}(\bbR^n)^*} \le \norm{\del{\vec u, p}}_{X_{t_{m-1}}(\Omega)} . 
$$ 
This completes the proof. 
\end{proof}

\begin{lemma}[boundary regularity of homogeneous solution]
\label{lem:boundary_regularity}
Let $\vec x \in \bdy\Omega$. If $\of{\vec u, p} \in X_{t_{m-1}}\of{\Omega}$ satisfies 
$\calS_\Omega(\vec u, p) = 0$, then $\calE_{\zeta}\of{\vec u, p} \in X_{t_{m}}\of{\bbR^{n}_{-}}$.
\end{lemma}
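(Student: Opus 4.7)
The plan is to mimic the interior argument of Lemma~\ref{lem:interior_regularity}, replacing Proposition~\ref{prop:sslip_interior_localization} by the boundary decomposition of Proposition~\ref{prop:sslip_boundary_localization}. Since $\calS_\Omega(\vec u,p)=0$, the latter gives
\begin{equation*}
\widetilde\calS\,\calE_\zeta(\vec u,p) \;=\; -\calC\,\calE_\zeta(\vec u,p) \;-\; \dual{\calP}\,\calK_\zeta(\vec u,p),
\end{equation*}
and $(\we,\pi):=\calE_\zeta(\vec u,p)$ lies in $X_{t_{m-1}}(\halfspace)$ by Lemma~\ref{lem:cont_RE}. I will show that the right-hand side extends to a continuous functional on $X_{t_m'}(\halfspace)$; then the regularity part of Theorem~\ref{thm:hs_perturbed_wellposed} applied with $r=t_{m-1}$ and $t=t_m$ yields $(\we,\pi)\in X_{t_m}(\halfspace)$.

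The $\calK_\zeta$ contribution is handled exactly as in the interior proof: H\"older's inequality in the conjugate pair $(t_{m-1},t_{m-1}')$ or $(t_m,t_m')$, followed by the embeddings of Lemma~\ref{lem:injectivity_embedding_s} and Korn's inequality (Remark~\ref{rem:norm_equiv}), yields $\|\calK_\zeta(\vec u,p)\|_{\dual{X_{t_m'}(\halfspace)}}\lesssim \|(\vec u,p)\|_{X_{t_{m-1}}(\Omega)}$. The genuinely new work is the estimate of $\calC(\we,\pi)$ from \eqref{eq:s_decomp_ci}. For each of the three pairings I apply H\"older with the appropriate conjugate pair, invoke Proposition~\ref{prop:piola_sym_grad_decomposition} on each $\vec\varepsilon_\matP$ or $\vec\vartheta_\matP$ factor, and finish with Lemma~\ref{lem:injectivity_embedding_s}. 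The recursion $1/t_m = 1/t_{m-1}+1/s-1/n$ in Definition~\ref{def:smooth_seq} is engineered precisely so that the exponent $t^\circ$ dictated by the Piola identity $1/t^\bullet = 1/s+1/t^\circ$ aligns with a Sobolev conjugate from Lemma~\ref{lem:injectivity_embedding_s}: setting $t^\bullet=t_m$ yields $t^\circ=t^*_{m-1}$ (used for the $\we$-factor in the terms involving $\vec\vartheta_\matP(\we)$), while setting $t^\bullet=t_{m-1}'$ yields $t^\circ=(t_m')^*$ (used for the $\ve$-factor of the term with $\vec\vartheta_\matP(\ve)$ paired against $\vec\varepsilon_\matP(\we)$).

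The main obstacle is the fully lower-order pairing $\langle\vec\vartheta_\matP(\we),J_\xe^{-1}\vec\vartheta_\matP(\ve)\rangle_\Ue$, in which neither factor carries a derivative: when Proposition~\ref{prop:piola_sym_grad_decomposition} is applied to the test factor with $t^\bullet=t_m'$, the forced index $1/t^\circ = 1/t_m'-1/s$ satisfies $t^\circ<(t_m')^*$ because $s>n$, so $V_{t_m'}(\halfspace)$ does not embed globally into $L^{t^\circ}(\halfspace)$. The rescue is that the pairing is confined to the bounded set $\Ue$, on which the elementary inclusion $L^{(t_m')^*}(\Ue)\hookrightarrow L^{t^\circ}(\Ue)$ restores $\|\vec\vartheta_\matP(\ve)\|_{L^{t_m'}(\Ue)}\lesssim \|\ve\|_{V_{t_m'}(\halfspace)}$. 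Combined with the companion bound $\|\vec\vartheta_\matP(\we)\|_{L^{t_m}(\Ue)}\lesssim \|\we\|_{V_{t_{m-1}}(\halfspace)}$, H\"older in $(t_m,t_m')$ closes this last term, so $\|\calC(\we,\pi)\|_{\dual{X_{t_m'}(\halfspace)}}\lesssim \|(\vec u,p)\|_{X_{t_{m-1}}(\Omega)}$, and Theorem~\ref{thm:hs_perturbed_wellposed} completes the proof.
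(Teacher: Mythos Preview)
Your approach is essentially the paper's: reduce to showing $\calC\calE_\zeta(\vec u,p)$ and $\dual\calP\calK_\zeta(\vec u,p)$ lie in $\dual{X_{t_m'}(\halfspace)}$, dispatch $\calK_\zeta$ as in the interior case, and bound the three $\calC$-pairings via H\"older plus the Piola estimate \eqref{eq:piola_sym_grad_bound} with the exponents aligned by the recursion of Definition~\ref{def:smooth_seq}. One caveat: the identities you invoke ($t^\bullet=t_m\Rightarrow t^\circ=t^*_{m-1}$ and $t^\bullet=t_{m-1}'\Rightarrow t^\circ=(t_m')^*$) are valid only for $m\ge1$, since the step $t_{-1}\to t_0$ does \emph{not} obey the recursion and $t_0'=(s+n)/2>n$ makes $(t_0')^*$ meaningless; the paper therefore treats $m=0$ separately, using $\ve\in W^1_{t_0'}(\Ue)\hookrightarrow L^\infty(\Ue)$ to pair against $\vec\vartheta_\matP(\ue)\in L^{n'}(\Ue)$ and $\vec\varepsilon_\matP(\ue)\in L^{s'}(\Ue)$.
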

\begin{proof} 
Since $\calS_\Omega\of{\vec u, p} = 0$ we have from \eqref{eq:stokes_boundary_decomposition_short}
    \[
        \widetilde\calS\calE_{\zeta}\of{\vec u, p} 
            =   - \calC \calE_{\zeta}\of{\vec u, p}
                - \dual{\calP}\calK_{\zeta}\of{\vec u, p}.
    \]
The strategy is the same as in \lemref{lem:interior_regularity}: we
show that the right-hand-side is in 
$\dual{X_{t_m'}\of{\bbR^{n}_{-}}}$ provided $(\vec u, p) \in X_{t_{m-1}}(\bbR^n_{-})$, and use 
\thmref{thm:hs_perturbed_wellposed} to conclude that $\calE_\zeta(\vec u, p) \in X_{t_m}(\bbR^n_{-})$. 
In particular, the regularity for $K_{\zeta}\of{\vec u, p}$ follows from the exact same argument 
used in \lemref{lem:interior_regularity}. 
We only need to prove the additional regularity for $\calC\calE_{\zeta}\of{\vec u, p}$. 

Let $\of{\ve, \qe} \in X_{t_m'}\of{\bbR^{n}_{-}}$ be fixed but
arbitrary and set $\of{\ue, \pe} = \calE_{\zeta}\of{\vec u, p}$. In
view of definition \eqref{eq:s_decomp_ci},
we split $\calC \of{\ue, \pe} \of{\ve, \qe}$ into three terms
\begin{gather*}
  \textrm{I} = \big\langle \vec\vartheta_{\matP}\of{\ue}, J_{\xe}^{-1}
         \vec\vartheta_{\matP}\of{\ve} \big\rangle_{\Ue}, \quad          
  \textrm{II} = -\big\langle \vec\vartheta_{\matP}\of{\ue},  J_{\xe}^{-1}
    \vec{\varepsilon}_{\matP}\of{\ve} \big\rangle_{\Ue}  , \\
  \textrm{III} = -\big\langle \vec{\varepsilon}_{\matP}\of{\ue}, J_{\xe}^{-1}
    \vec\vartheta_{\matP}\of{\ve}\big\rangle_{\Ue}    , 
\end{gather*}
where $U = U(\vec x,\delta) = \Psi(\Ue)$ is the physical domain
and $\Ue$ is the bubble domain (see Figure \ref{f:RefMap}).
We also recall the Sobolev number $t^*$ 
associated with $t < n$: 
\[
\frac{1}{t^*} = \frac{1}{t} - \frac{1}{n} . 
\] 
We split the proof into two parts depending on whether $m \geq 1$ or $m=0$. 

\noindent \boxed{1} Case $m \geq 1$: Since $t_{m-1} < 2 \le n$ we have 
$\ue \in L^{t^*_{m-1}}(\Ue)$, and applying \eqref{eq:piola_sym_grad_bound} 
with 
\[
\frac{1}{t_{m-1}^\bullet} 
= \frac{1}{s} + \frac{1}{t_{m-1}^*} 
= \frac{1}{t_{m-1}} + \frac{1}{s} - \frac{1}{n} = \frac{1}{t_m}
\]
we deduce $\vec\vartheta_{\vec P}(\ue) \in L^{t_m}(\Ue)$ with 
\begin{equation}\label{lem:boundary_regularity_a}
\normL{\vec\vartheta_{\vec P}(\ue)}{t_m}{\Ue}
\lesssim \normS{\ue}1{t_{m-1}}\Ue , 
\end{equation}
as a consequence of \eqref{eq:embd}. 
On the other hand, combining Remark~\ref{rem:tmprime} with
\eqref{eq:GN_inequality} yields $\ve \in L^{(t_m')^*}(\Ue)$, whence invoking 
\eqref{eq:piola_sym_grad_bound} with 
\[
\frac{1}{(t'_m)^\bullet} 
= \frac{1}{(t'_m)^*} + \frac{1}{s} 
= \frac{1}{t'_{m-1}} , 
\]
noticing that $t'_m < t'_{m-1}$
and using Gagliardo-Nirenberg inequality, we obtain 
\[
\normL{\vec\vartheta_{\vec P}(\ve)}{t_m'}{\Ue}
\lesssim \normL{\ve}{(t'_m)^*}{\Ue}
\lesssim \normSZ{\ve}1{t_{m}'}\Ue .
\]
This implies
\[
\abs{\textrm{I}}
\lesssim \normL{\vec\vartheta_{\vec P}\of{\ue}}{t_m}\Ue 
    \normL{\vec\vartheta_{\vec P}\of{\ve}}{t_m'}\Ue         
\lesssim \normS{\ue}1{t_{m-1}}\Ue
    \normSZ{\ve}1{t_{m}'}\Ue .     
\]
Using \lemref{lem:equiv_norm} (Korn's inequality), namely 
$\normS{\ue}1{t_{m-1}}\Ue \lesssim \normL{\vec\varepsilon \of{\vec u}}{t_{m-1}}\Omega  \lesssim \linebreak \normSZ{\vec u}1{t_{m-1}}\Omega$, 
we arrive at
\[
\abs{\textrm{I}} 
\lesssim \normSZ{\vec u}1{t_{m-1}}\Omega
    \normSZ{\ve}1{t_{m}'}\Ue.  
\]

To estimate \textrm{II} we resort to \eqref{eq:piola_sym_grad_bound}, 
\eqref{lem:boundary_regularity_a}, and \lemref{lem:equiv_norm}, to deduce 
\[
\abs{\textrm{II}} 
\lesssim  \normL{\vec\vartheta_{\vec P}\of{\ue}}{t_m}\Ue
    \normL{\vec\varepsilon_{\vec P}\of{\ve}}{t_m'}\Ue    
\lesssim  \normS{\ue}1{t_{m-1}}\Ue
    \normSZ{\ve}1{t_{m}'}\Ue    
\lesssim 
    \normSZ{\vec u}1{t_{m-1}}\Omega \normSZ{\ve}1{t_{m}'}\Ue  . 
\]

We next deal with $\textrm{III}$. Since 
\[
\frac{1}{t'_{m-1}} - \frac{1}{s} = \frac{1}{t'_m} - \frac{1}{n} = \frac{1}{(t'_m)^*}  > 0,
\]
applying \eqref{eq:piola_sym_grad_bound} yields $\normL{\vec\vartheta_{\vec P}\of{\ve}}{t_{m-1}'}\Ue \lesssim \normL{\ve}{(t'_m)^*}\Ue
\lesssim \normSZ{\ve}1{t'_m}\Ue$, where the last inequality is due to Gagliardo-Nirenberg inequality. \lemref{lem:equiv_norm} (Korn's inequality) further leads to
\[
\abs{\textrm{III}} 
\lesssim \normL{{\vec\varepsilon}_{\vec P} \of{\ue}}{t_{m-1}}\Ue
    \norm{\vec\vartheta_{\matP}\of{\ve}}_{L^{t_{m-1}'}(\Ue)} 
\lesssim 
    \normSZ{\ue}1{t_{m-1}}\Ue \normSZ{\ve}1{t_m'}\Ue     
\]
%
%
Combining the estimates for \textrm{I}, \textrm{II}, and \textrm{III} gives
\[
\abs{\calC\of{\ue, \pe}\of{\ve, \qe}}
\lesssim \norm{\of{\vec u, p}}_{X_{t_{m-1}}(\Omega)} \norm{\of{\ve,\qe}}_{X_{t_{m}'}(\bbR^{n}_{-})} , 
\]
thereby leading to $\calC\of{\ue, \pe} \in X_{t_{m}'}(\bbR^{n}_{-})^*$ as desired.

\noindent \boxed{2} Case $m=0$: Since $t_{-1} = s' < n$ we have $\ue \in L^{t^*_{-1}}(\Ue)$, whence $\vec\vartheta_{\vec P}(\ue) \in L^{t^\bullet_{-1}}(\Ue)$ with 
\[
\frac{1}{t^\bullet_{-1}} 
= \frac{1}{s} + \frac{1}{t^*_{-1}} 
= \frac{1}{s} + \frac{1}{s'} - \frac{1}{n}
= 1 - \frac{1}{n} 
= \frac{1}{n'} 
\]
and 
\[
\normL{\vec\vartheta_{\vec P}(\ue)}{n'}\Ue
\lesssim \normS{\ue}1{t_{-1}}\Ue  ,
\]
because of \eqref{eq:piola_sym_grad_bound}. On the other hand, the fact that 
\[
\frac{1}{t_0'} = 1 - \frac{1}{t_0} = \frac{2}{s+n} < \frac{1}{n} 
\]
implies 
\[
\normL{\vec\vartheta_{\vec P}(\ve)}{n}\Ue 
\lesssim \normL{\ve}{\infty}\Ue 
\lesssim \normS{\ve}1{t_0'}\Ue .
\]
Consequently
\[
\abs{\textrm{I}} 
\lesssim \normL{\vec\vartheta_{\vec P}(\ue)}{n'}\Ue
    \normL{\vec\vartheta_{\vec P}(\ue)}{n}\Ue
\lesssim \normS{\ue}1{t_{-1}}\Ue 
    \normS{\ve}1{t'_0}\Ue . 
\]
Since $\ve \in \sob1{t_0'}\Ue$, we see that
\[
\abs{\textrm{II}} 
\lesssim \normL{\vec\vartheta_{\vec P}(\ue)}{n'}\Ue
    \normL{\vec\varepsilon_{\vec P}(\ve)}{n}\Ue 
\lesssim \normS{\ue}1{t_{-1}}\Ue \normSZ{\ve}1{n}\Ue     
\lesssim \normS{\ue}1{t_{-1}}\Ue \normSZ{\ve}1{t_0'}\Ue .    
\]
As $\ue \in \sob1{t_{-1}}\Ue$ and $t_{-1} = s'$, with the help of \eqref{eq:piola_sym_grad_bound} for $t^\circ = \infty$ we infer that 
\[
\abs{\textrm{III}} 
\lesssim \normL{\vec\varepsilon_{\vec P}(\ue)}{s'}\Ue
    \normL{\vec\vartheta_{\vec P}(\ve)}{s}\Ue
\lesssim \normSZ{\ue}1{t_{-1}}\Ue     
    \normL{\ve}\infty\Ue 
\lesssim \normSZ{\ue}1{t_{-1}}\Ue \normS{\ve}1{t_0'}\Ue .  
\]
Applying \lemref{lem:equiv_norm} (Korn's inequality), we deduce 
$\normS{\ue}1{t_{-1}}\Ue \lesssim \normSZ{\ue}1{t_{-1}}\Ue$, and further using 
Remark~\ref{rem:norm_equiv} we obtain $\normS{\ve}1{t_0'}\Ue \lesssim \normSZ{\ve}1{t_0'}\Ue$. 
This implies that  
\[
\abs{\calC\of{\ue, \pe}\of{\ve, \qe}}
\lesssim \norm{\of{\ue,\pe}}_{X_{t_{-1}}(\Ue)} \norm{\of{\ve,\qe}}_{X_{t_0'}(\bbR^{n}_{-})} , 
\]
and as a consequence that $\calC\of{\ue, \pe} \in X_{t_0'}(\bbR^{n}_{-})^*$. This concludes the proof. 
\end{proof}

\begin{corollary}[global regularity]
\label{cor:glo_reg}
If $\of{\vec u, p}$ in $X_{t_{m-1}}\of{\Omega}$ satisfies $\calS(\vec u,p) = 0$, then $\of{\vec u, p} \in X_{t_{m}}\of{\Omega}$.
\end{corollary}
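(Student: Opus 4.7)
The plan is to use the space decomposition from Lemma \ref{lem:space_decomposition} together with the local regularity Lemmas \ref{lem:interior_regularity} and \ref{lem:boundary_regularity}. Concretely, since $\of{\vec u, p} \in X_{t_{m-1}}\of{\Omega}$, the identity
\[
(\vec u, p) = \sum_{i=1}^{k} \calR_{\varphi_i} \calE_{\varrho_i}(\vec u, p)
\]
holds in $X_{t_{m-1}}\of{\Omega}$. My strategy is to upgrade the regularity of each summand separately on the reference domain $\Qe_i$, and then transport the improved regularity back to $\Omega$ via $\calR_{\varphi_i}$.

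First I would fix an index $i \in \{1,\dots,k\}$ and consider the localized pair $\calE_{\varrho_i}(\vec u, p) \in X_{t_{m-1}}\of{\Qe_i}$ where $\Qe_i$ is either $\bbR^n$ (if $\vec x_i \in \Omega$) or $\bbR^n_{-}$ (if $\vec x_i \in \bdy\Omega$). Since $\calS_\Omega(\vec u,p) = 0$ by hypothesis, I can invoke Lemma \ref{lem:interior_regularity} in the interior case and Lemma \ref{lem:boundary_regularity} in the boundary case, both with cutoff $\zeta = \varrho_i$, to conclude that $\calE_{\varrho_i}(\vec u, p) \in X_{t_{m}}\of{\Qe_i}$.

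Next I would apply the localization operator $\calR_{\varphi_i}$. By Lemma \ref{lem:cont_RE} this operator is continuous from $X_{t_m}\of{\Qe_i}$ to $X_{t_m}\of{\Omega}$ (which requires $t_m \in [s', s]$, a property guaranteed by Definition \ref{def:smooth_seq} since the sequence is monotone increasing and is truncated once it reaches $2 \leq s$). Hence $\calR_{\varphi_i}\calE_{\varrho_i}(\vec u, p) \in X_{t_m}\of{\Omega}$. Summing the finitely many terms and using Lemma \ref{lem:space_decomposition} yields $(\vec u, p) \in X_{t_m}\of{\Omega}$, which is the claim.

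The only delicate point is a consistency check rather than a genuine obstacle: the lemmas that lift regularity from $t_{m-1}$ to $t_m$ require $t_{m-1} < 2 \leq n$, while continuity of $\calR_{\varphi_i}$ requires $t_m \leq s$. Both hold throughout the induction by Definition \ref{def:smooth_seq} and Remark \ref{rem:tmprime}, so the one-step bootstrap closes without any additional argument.
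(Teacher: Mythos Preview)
Your proof is correct and follows essentially the same approach as the paper: apply the space decomposition of Lemma~\ref{lem:space_decomposition}, upgrade each localized piece $\calE_{\varrho_i}(\vec u,p)$ via Lemmas~\ref{lem:interior_regularity} and~\ref{lem:boundary_regularity}, and then use the continuity of $\calR_{\varphi_i}$ from Lemma~\ref{lem:cont_RE} to sum. Your additional consistency check on the range of $t_{m-1}$ and $t_m$ is a welcome clarification but not a departure from the paper's argument.
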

\begin{proof} 
For a given $(\vec u,p) \in X_{t_{m-1}}(\Omega)$, \lemref{lem:space_decomposition} (space decomposition of $X_r(\Omega)$) with $r = t_{m-1}$ implies 
\[ 
\of{\vec u, p} =\sum_{i=1}^{k} \calR_{\varphi_i}\calE_{\varrho_i} \of{\vec u, p} .
\]
For $i = 1,\dots, k$, Lemmas~\ref{lem:interior_regularity} and \ref{lem:boundary_regularity}, yield
$\calE_{\varrho_i}\of{\vec u, p} \in X_{t_m}\of{\Qe_i}$ with $\Qe_i = \bbR^n$ or $\bbR^n_{-}$.  
Finally, using \lemref{lem:cont_RE} (continuity of $\calR_{\varphi_i}$)
leads to the asserted result.
\end{proof}

\begin{proposition}
[injectivity of $\calS_{\Omega}$]
\label{s:sslip_somegar_injective}
Let $s' \leq r < 2$. The Stokes operator $\calS_{\Omega} : X_r\of\Omega \to \dual{X_{r'}\of\Omega}$ is injective.
\end{proposition}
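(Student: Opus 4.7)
The plan is to combine the smoothing/bootstrap machinery developed in Definition~\ref{def:smooth_seq}--Corollary~\ref{cor:glo_reg} with the Hilbert-space injectivity of Proposition~\ref{prop:s_omega_is_injective}. The idea is that, although the \emph{a priori} regularity is only $X_r(\Omega)$ with $r<2$, a homogeneous solution automatically inherits higher integrability from the localization decomposition, and iterating this finitely many times pushes the solution into $X_2(\Omega)$ where injectivity is already known.

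First, since $\Omega$ is bounded, H\"older's inequality gives the continuous embedding $X_r(\Omega)\hookrightarrow X_{s'}(\Omega)$ for every $s'\le r<2$; hence any $(\vec u,p)\in X_r(\Omega)$ with $\calS_\Omega(\vec u,p)=0$ belongs to $X_{s'}(\Omega)=X_{t_{-1}}(\Omega)$, and it suffices to prove the statement for $r=s'$.

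Next, I would run the bootstrap. Starting from $(\vec u,p)\in X_{t_{-1}}(\Omega)$ with $\calS_\Omega(\vec u,p)=0$, Corollary~\ref{cor:glo_reg} (with $m=0$) upgrades the regularity to $(\vec u,p)\in X_{t_0}(\Omega)$. Since $\calS_\Omega$ is linear, $(\vec u,p)$ is still a homogeneous solution, so we may reapply Corollary~\ref{cor:glo_reg} (with $m=1$) to land in $X_{t_1}(\Omega)$. Iterating for $m=0,1,\dots,M$, with $M$ chosen as in Definition~\ref{def:smooth_seq}, yields $(\vec u,p)\in X_{t_M}(\Omega)$ with $t_M\ge 2$. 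Another application of the bounded-domain embedding gives $(\vec u,p)\in X_2(\Omega)$, and Proposition~\ref{prop:s_omega_is_injective} (or equivalently the Hilbert-case Theorem~\ref{thm:hilbert_case}) then forces $(\vec u,p)=(\vec 0,0)$.

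The only delicate point is the legitimacy of iterating Corollary~\ref{cor:glo_reg}: at each step one must verify that the upgraded pair is still a homogeneous solution, but this is immediate from linearity of $\calS_\Omega$ and the fact that the upgraded regularity class is a subspace of the previous one (because $\Omega$ is bounded). The number of iterations is finite by construction of $M$, so no compactness or passage-to-the-limit issue arises. Thus the only genuine content of the proof is assembling the pieces; the rest is bookkeeping.
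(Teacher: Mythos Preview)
Your proposal is correct and follows essentially the same approach as the paper: reduce to $r=s'$ by the embedding $X_r(\Omega)\hookrightarrow X_{s'}(\Omega)$ on a bounded domain, iterate Corollary~\ref{cor:glo_reg} along the smoothing sequence of Definition~\ref{def:smooth_seq} until $t_M\ge 2$, and then invoke Proposition~\ref{prop:s_omega_is_injective}. The paper's proof is the same argument, just stated more tersely.
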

\begin{proof} 
As $\Omega$ is bounded, it suffices to prove the assertion for $r = s'$. 
Let $(\vec u, p) \in X_{s'}\of\Omega$ solve $\calS_\Omega(\vec u,p) = 0$. 
Applying Corollary~\ref{cor:glo_reg} $M$ times, where $M$ is given in Definition~\ref{def:smooth_seq}, we obtain 
    \[
        \of{\vec u, p} \in X_{t_{-1}=s'}\of\Omega \cap \ldots \cap X_{t_M}\of\Omega,
    \]
with $t_M \geq 2$. \propref{prop:s_omega_is_injective} further implies $(\vec u, p) = (\vec 0,0)$, which concludes the
proof.
\end{proof}

\begin{proposition}[injectivity of $\dual\calS_{\Omega}$]
\label{prop:dSinj}
Let $s' \leq r \leq s$. The dual Stokes operator $\dual\calS_{\Omega}$ is injective.
\end{proposition}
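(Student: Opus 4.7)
The plan is to exploit a near-symmetry of the Stokes bilinear form together with the injectivity results for $\calS_\Omega$ already established in Propositions \ref{prop:s_omega_is_injective} and \ref{s:sslip_somegar_injective}.

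First, I would record the elementary identity
\[
\calS_\Omega(\vec u, p)(\vec v, q) = \calS_\Omega(\vec v, -q)(\vec u, -p),
\qquad (\vec u,p) \in X_r(\Omega),\ (\vec v,q) \in X_{r'}(\Omega),
\]
which follows directly from the definition \eqref{eq:stokes_form}: the strain–strain term $\eta\pair{\gradS u, \gradS v}$ is symmetric under swap, while the two pressure–divergence terms exchange roles with the sign flip $p \mapsto -p$, $q \mapsto -q$.

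Next, suppose $(\vec v, q) \in X_{r'}(\Omega)$ satisfies $\dual\calS_\Omega(\vec v, q) = 0$ in $\dual{X_r(\Omega)}$. By the definition of the dual operator,
\[
0 = \pair{\dual\calS_\Omega(\vec v, q),(\vec u, p)}_{\dual{X_r},X_r}
= \calS_\Omega(\vec u, p)(\vec v, q)
\qquad \forall (\vec u,p) \in X_r(\Omega).
\]
Invoking the symmetry identity above and noting that $(\vec u, -p)$ ranges over all of $X_r(\Omega)$ as $(\vec u, p)$ does, we deduce
\[
\calS_\Omega(\vec v, -q)(\tilde{\vec u}, \tilde p) = 0
\qquad \forall (\tilde{\vec u}, \tilde p) \in X_r(\Omega).
\]
Thus $(\vec v, -q) \in X_{r'}(\Omega)$ lies in the kernel of the Stokes operator viewed as a map $X_{r'}(\Omega) \to \dual{X_r(\Omega)}$, i.e.\ the Stokes operator with the roles of $r$ and $r'$ interchanged.

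Finally, since $s' \le r \le s$ is equivalent to $s' \le r' \le s$, the injectivity results in Propositions \ref{prop:s_omega_is_injective} and \ref{s:sslip_somegar_injective} apply verbatim with $r$ replaced by $r'$, yielding $(\vec v, -q) = (\vec 0, 0)$, and hence $(\vec v, q) = (\vec 0, 0)$. This proves injectivity of $\dual\calS_\Omega$. No step of this argument is delicate: the entire difficulty has been absorbed into the previously-established injectivity of $\calS_\Omega$ on the full range of exponents $[s',s]$, and the bootstrap/localization machinery that powered it.
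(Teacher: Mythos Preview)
Your proof is correct and follows essentially the same approach as the paper: both exploit the sign-flip symmetry $\calS_\Omega(\vec u,p)(\vec v,q) = \calS_\Omega(\vec v,-q)(\vec u,-p)$ to reduce injectivity of $\dual\calS_\Omega$ at exponent $r$ to injectivity of $\calS_\Omega$ at exponent $r'$, and then appeal to Propositions~\ref{prop:s_omega_is_injective} and~\ref{s:sslip_somegar_injective}. The paper's version is slightly terser (it introduces the notation $\calS_{\Omega,r}$ to distinguish the exponents), but the logical content is identical.
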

\begin{proof} 
We use the subscript $r$ on the operator $\calS_{\Omega}$ to indicate that $\calS_\Omega$ is defined on $X_r(\Omega)$, 
i.e. $\calS_{\Omega,r} : X_r(\Omega) \rightarrow X_{r'}(\Omega)^*$. Since $X_{r'}(\Omega)$ is reflexive, we further 
deduce $\calS_{\Omega,r}^* : X_{r'}(\Omega) \rightarrow X_{r}(\Omega)^*$. 
Let $(\vec v, q) \in X_{r'}(\Omega)$ satisfy 
\[
\calS_{\Omega,r}^*(\vec v,q)(\vec u,p) = 0, \quad \forall (\vec u, p) \in X_{r}(\Omega) .
\]
Using the definition of adjoint operator  
\[
\calS_{\Omega,r}(\vec u,p)(\vec v,q) = 0, \quad \forall (\vec u, p) \in X_{r}(\Omega) .
\] 
Owing to the definition of $\calS_{\Omega,r}$ we have
\[
\calS_{\Omega,r'}(\vec v,-q)(\vec u, -p) = 0, \quad \forall (\vec u, p) \in X_{r}(\Omega) .
\]
As $\calS_{\Omega,r'}$ is injective, thus $(\vec v, q) = (\vec 0,0)$, which completes the proof. 
\end{proof}


\begin{corollary}[invertibility of $\calS_\Omega$]
The Stokes operator $\calS_\Omega : X_r(\Omega) \rightarrow
X_{r'}(\Omega)^*$ is invertible for $s' \leq r \leq s$.
\end{corollary}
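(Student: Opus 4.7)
The plan is to assemble the three ingredients established in \secref{s:decomposition} and \secref{s:stokes_slip_injective} and then invoke Corollary~\ref{cor:h_r_s} verbatim. First, I would cite \thmref{thm:pinvS}, which constructs the explicit pseudoinverse
\[
\calS_\Omega^\dagger = \sum_{i=1}^k \calR_{\varrho_i}\widetilde\calS_i^{-1}\dual{\calR}_{\varphi_i};
\]
by \lemref{lem:lax_c}, this immediately shows that $\calS_\Omega$ has finite index. This single step already absorbs all of the hard analytic work of the paper: the localization decomposition of \secref{s:loceq}, the $W^2_s$-regularity of the diffeomorphism $\Psi$ from Corollary~\ref{cor:omega_diffeomorphism} (where the minimal boundary regularity $W^{2-1/s}_s$ plays its decisive role), the Neumann-type perturbation estimate of \thmref{thm:hs_perturbed_wellposed}, and the compactness of the lower-order operators $\calK_\zeta$ and $\calC$ from \lemref{lem:k_compact} and \lemref{lem:C_compact}.

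Second, I would establish injectivity of $\calS_\Omega$ on the full range $s' \leq r \leq s$ by combining \propref{prop:s_omega_is_injective} (covering $2 \leq r \leq s$ via the Hilbert case \thmref{thm:hilbert_case} and the embedding $X_r(\Omega) \hookrightarrow X_2(\Omega)$) with \propref{s:sslip_somegar_injective} (covering $s' \leq r < 2$ via the bootstrap of Corollary~\ref{cor:glo_reg}). Third, \propref{prop:dSinj} supplies injectivity of the dual operator $\dual{\calS_\Omega}$ for the same range. With finite index and both $\calS_\Omega$ and $\dual{\calS_\Omega}$ injective, Corollary~\ref{cor:h_r_s} applies directly and produces bijectivity of $\calS_\Omega : X_r(\Omega) \to \dual{X_{r'}(\Omega)}$.

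Finally, the continuity estimate \eqref{eq:stokes_apriori} announced in \thmref{T:wellposed} will follow from the Banach open mapping theorem, since a bounded bijection between Banach spaces has a bounded inverse. I do not expect a genuine obstacle in this final assembly step: the substantive difficulties have already been absorbed into \thmref{thm:pinvS} (constructing a pseudoinverse on a merely $W^{2-1/s}_s$ domain via the Piola transform framework of \secref{s:sobolev_piola}) and into \propref{s:sslip_somegar_injective} (the Moser-style bootstrap through the smoothing sequence of Definition~\ref{def:smooth_seq} needed to cover the sub-Hilbert exponents $r<2$). What remains here is the clean index-theoretic assembly that was precisely the motivation for introducing the framework of \lemref{lem:lax_c}--\lemref{lem:lax_b} at the start of \secref{s:sobolev_case}.
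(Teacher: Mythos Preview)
Your proposal is correct and follows exactly the same route as the paper: finite index via \thmref{thm:pinvS} and \lemref{lem:lax_c}, injectivity of $\calS_\Omega$ via Propositions~\ref{prop:s_omega_is_injective} and~\ref{s:sslip_somegar_injective}, injectivity of $\dual{\calS_\Omega}$ via Proposition~\ref{prop:dSinj}, and then Corollary~\ref{cor:h_r_s}. The only addition is your explicit mention of the open mapping theorem for the a~priori bound, which the paper defers to the statement of Theorem~\ref{T:wellposed}.
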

\begin{proof}
The index of $\calS_\Omega$ is finite because $\calS_\Omega$ has a pseudoinverse (\lemref{lem:lax_c} and \thmref{thm:pinvS}). In addition, $\calS_\Omega$ is injective (Proposition~\ref{prop:s_omega_is_injective} and \ref{s:sslip_somegar_injective}) and $\calS_\Omega^*$ is also injective (Proposition~\ref{prop:dSinj}). Apply Corollary~\ref{cor:h_r_s} to conclude the assertion. 
\end{proof}


\section{The non-homogeneous case \boldmath$u \cdot \nu \neq 0$}
\label{s:lifting}
We present a framework to treat the nonhomogeneous essential boundary condition \eqref{eq:stokes_bc}. It relies on the standard practice of lifting the data inside the domain. By the principle of superposition it suffices  to study the case when $\phi$ is the only non-trivial data. Note that the compatibility condition \eqref{eq:stokes_compatibility} becomes $\int_{\bdy\Omega} \phi = 0$.

Since the domain is of class $W^{2-1/s}_s$, for $s>n$, the unit
normal satisfies $\vec\nu \in \sob{1-1/s}{s}{\bdy\Omega}$.
We extend each component of $\vec\nu$ and thus $\vec\nu$
to a function in $W^1_s(\Omega)$ (still denoted $\vec \nu$).
Given a scalar function $\phi \in \sob{1-1/r}{r}{\bdy\Omega}$, we
still denote $\phi\in W^1_r(\Omega)$ its extension to $\Omega$. These
extensions are possible because $\partial\Omega$ is Lipschitz. We
define $\vec\varphi := \vec\nu \phi$ and note that, for
$r\le s$ and $s>n$, a simple calculation yields
\begin{equation}\label{eq:sslip_phi_lift_bound}
        \normS{\vec\varphi}1r\Omega 
            \leq C_{\Omega,n,r,s} 
                \normS{\phi}{1-1/r}r{\bdy\Omega}
                \normS{\vnu}{1-1/s}{s}{\bdy\Omega} .       
\end{equation}
In fact, $\vec\nu \nabla\phi \in L^r(\Omega)$ is obviously valid.
Therefore, the only problematic term is $\phi\nabla\vec\nu$ when $r\le n$,
for otherwise $\phi$ is bounded and $\phi\nabla\vec\nu\in L^s(\Omega)$.
In such a case, $\phi\in L^t(\Omega)$ with $\frac{1}{t} > \frac{1}{r} - \frac{1}{n}$,
whence $\frac{1}{t^*}=\frac{1}{r}-\frac{1}{t}<\frac{1}{n}$. Since
$s>n$ we can choose $t$ so that $n<t^*\le s$ and
H\"older's inequality gives $\phi\nabla\vec\nu \in W^1_r(\Omega)$.
Similar estimates are reported in \cite[Lemma 13.3]{LTartar_2007} and
\cite[Corollary 1.1]{VGirault_PARaviart_1986a}\cite[Theorem
  7.39]{RAAdams_JJFFournier_2003}.

\begin{corollary}\label{cor:lifting}
The pair $\del{\vec u, p}\in X_r(\Omega)$ is a
solution to \eqref{eq:stokes_full} with only $\phi$ non-trivial if and
only if $\del{\vec w, p} = \del{\vec u - \vec\varphi, p}\in X_r(\Omega)$
is a weak solution to \eqref{eq:stokes_full} with
	\[
		\vec{f} =\eta \divg{\vec\varepsilon\of{\vec\varphi}},\quad
		g = -\divg{\vec\varphi}, \quad
		\vec\psi = -\eta \matT\transpose \vec\varepsilon\of{\vec\varphi}\vnu, \quad 
		\vec w\cdot\vnu =0.
	\]
In particular, $\del{\vec w, p}$ satisfies  
	\begin{equation*}
		\calS_\Omega\of{\vec w, p}\of{\vec v, q} 
             = \eta\pair{\vec\varepsilon\of{\vec\varphi}, \gradS
                  {v}}_\Omega 
                + \pair{p, \divg \vec v} 
		+ \pair{\vec \psi, \gamma_0 \vec v}_{\bdy\Omega}
		- \pair{\divg\vec\varphi, q}_\Omega
	\end{equation*}
for all $\of{\vec v, q} \in X_{r'}\of\Omega$,	
and its norm is controlled by the data $\phi$, namely
	\[
		\norm{\of{\vec w,p}}_{X_r\of\Omega} \leq C_{\Omega,n,r,\eta}\normS{\phi}{1-1/r}r{\bdy\Omega}.
	\]
\end{corollary}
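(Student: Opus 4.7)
The plan is to reduce the inhomogeneous problem to the homogeneous one treated by \thmref{T:wellposed} via the explicit lifting $\vec\varphi := \phi\vnu$. First I would fix an extension of $\phi$ from $\sob{1-1/r}{r}{\bdy\Omega}$ to $\sob{1}{r}{\Omega}$ (trace theorem on Lipschitz domains) and an extension of each component of $\vnu$ from $\sob{1-1/s}{s}{\bdy\Omega}$ to $\sob{1}{s}{\Omega}$. Then I would verify the product bound \eqref{eq:sslip_phi_lift_bound} using the case analysis already sketched after its statement (the case $r>n$ is trivial by H\"older; the case $r\le n$ requires $s>n$ and a Sobolev embedding to absorb $\phi\,\grad\vnu$ into $L^r$). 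On the boundary, by construction, $\vec\varphi\cdot\vnu=\phi$ in the trace sense, so $\vec w:=\vec u-\vec\varphi$ satisfies $\vec w\cdot\vnu=0$.

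Next I would substitute $\vec u = \vec w + \vec\varphi$ into the variational identity \eqref{eq:stokes_abstract}, using the bilinearity of $\calS_\Omega$, to obtain
\[
   \calS_\Omega(\vec w,p)(\vec v,q) = \calF(\vec v,q) - \calS_\Omega(\vec\varphi,0)(\vec v,q) \qquad \forall (\vec v,q)\in X_{r'}(\Omega).
\]
Unfolding the right-hand side by integration by parts (valid because $\vec\varphi\in\sob{1}{r}{\Omega}$ and $(\vec v,q)\in X_{r'}(\Omega)$) identifies the modified data
\[
   \vec f=\eta\,\divg\vec\varepsilon(\vec\varphi),\qquad g=-\divg\vec\varphi,\qquad \vec\psi=-\eta\matT\transpose\vec\varepsilon(\vec\varphi)\vnu,
\]
exactly as stated; the boundary term splits using $\vec v = \matT\vec v$ (since $\vec v\cdot\vnu=0$) to isolate the tangential stress.

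Then I would check that the new data defines a functional in $\dual{X_{r'}(\Omega)}$ satisfying the compatibility conditions \eqref{eq:stokes_compatibility}. The $\dual{X_{r'}(\Omega)}$--bound is immediate from \eqref{eq:sslip_phi_lift_bound} and trace theory. For $\int_\Omega g=0$, the divergence theorem gives $\int_\Omega\divg\vec\varphi=\int_{\bdy\Omega}\phi|\vnu|^2=\int_{\bdy\Omega}\phi=0$, which is the assumed compatibility on $\phi$. For the orthogonality to $Z(\Omega)$, test $\vec f$ by $\vec z\in Z(\Omega)$, integrate by parts, and use that $\vec\varepsilon(\vec z)=0$ together with $\vec z\cdot\vnu=0$; the boundary contribution cancels exactly against $\int_{\bdy\Omega}\vec\psi\cdot\vec z$.

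Finally I would invoke \thmref{T:wellposed} applied to $(\vec w,p)$ with these modified data to conclude existence, uniqueness, and the estimate $\|(\vec w,p)\|_{X_r(\Omega)}\lesssim\|\phi\|_{W^{1-1/r}_r(\bdy\Omega)}$, the last inequality following by chaining the operator bounds of the new data through \eqref{eq:sslip_phi_lift_bound}. The converse direction in the ``if and only if'' is symmetric: starting from $(\vec w,p)$ solving the lifted problem, set $\vec u:=\vec w+\vec\varphi$ and reverse the same integration by parts. The only delicate step I anticipate is the product bound \eqref{eq:sslip_phi_lift_bound}, which is where the hypothesis $s>n$ together with the boundary regularity $W^{2-1/s}_s$ of $\Omega$ is used critically to guarantee that $\vnu$ extends with the $W^1_s$ regularity needed to multiply $\phi\in W^1_r$ without loss; everything else is bookkeeping on the variational identity and compatibility conditions.
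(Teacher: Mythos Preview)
Your proposal is correct and follows essentially the same approach as the paper: construct the lifting $\vec\varphi=\phi\vnu$, substitute $\vec u=\vec w+\vec\varphi$ into the variational formulation, integrate by parts using that test functions are tangential, and apply the homogeneous well-posedness together with \eqref{eq:sslip_phi_lift_bound}. Your write-up is in fact more thorough than the paper's terse proof, since you explicitly verify the compatibility conditions \eqref{eq:stokes_compatibility} for the lifted data, which the paper leaves implicit.
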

\begin{proof} The expressions for $\vec f$, $g$, $\vec\psi$, and $\vec w\cdot\vnu$ are straightforward to obtain, so we skip their derivation. The variational form follows after integrating by parts and recalling that the test functions $\vec v$ are tangential on the boundary. 
Finally, the continuity estimate is a direct application of the results when $\vec w \cdot \vnu = 0$ as well as \eqref{eq:sslip_phi_lift_bound}.
\end{proof}

\begin{remark}[alternative lift] \rm
Given data $\phi$ in $\sob{1-1/r}{r}{\bdy\Omega}$, consider solving the Neumann problem 
    \begin{equation}\label{eq:Np}
    \begin{aligned}
        -\lapl \varphi &= 0 \quad  \mbox{in } \Omega \\
        \partial_\vnu \varphi &= \phi  \quad \mbox{on } \bdy\Omega
    \end{aligned}    
    \end{equation}
in $\sob{2}{r}{\Omega}$. Then, the pair $\of{\vec u, p}$ is a solution
of \eqref{eq:stokes}, \eqref{eq:stokes_bc}, with only $\phi$ non-trivial, if and only if $\of{\vec w, p} = \del{\vec u - \grad\varphi, p}$ is a solution of \eqref{eq:stokes}, \eqref{eq:stokes_bc}, with 
    \[
		\vec{f} =\eta \divg{\vec\varepsilon\of{\grad\varphi}}\mbox{, }\quad 
		g = -\divg{\grad\varphi} \mbox{, }\quad
		\vec\psi = -\eta \matT\transpose\vec\varepsilon\of{\grad\varphi}\vnu,\quad
		\vec w\cdot\vnu =0.    
    \]

We point out that existence of a strong solution to the
inhomogeneous Neumann problem \eqref{eq:Np} for Lipschitz or even
$C^1$ domains may fail in general; see \cite{DJerison_CEKenig_1989a}.
On the other hand, it is well-known that for $C^{1,1}$ domains a strong solution always exists \cite{MR3396210}. In this respect, our fractional Sobolev domain regularity appears to be (nearly) optimal.
\end{remark}

\section{The Navier boundary condition}
\label{s:friction_bc}
The goal of this section is to consider the Stokes problem \eqref{eq:stokes} with the Navier boundary condition, i.e.
	\begin{equation}\label{eq:navier_bc}
	    \vec u\cdot\vec{\nu} = 0,\quad  \beta \matT \vec u + \matT\transpose\vec\sigma\del{\vec u, p}\vec\nu = \vec 0 \quad \mbox{on } \bdy\Omega.
	\end{equation}
with $\beta > 0$.
The strategy is to notice that the term $\matT\vec u$ is a compact perturbation of the pure-slip problem, and in view of \lemref{lem:lax_d} we have that the index of this new problem is zero. Therefore, its well-posedness is governed only by its finite dimensional null-space. We structure the rest of this section as follows: first, we state mild integrability assumptions on the parameter $\beta$ which still guarantee compactness of the added term; second, we show that the perturbed problem is injective by constructing a smoothing sequence as was done in \secref{s:stokes_slip_injective}; finally, we state the main result as another consequence of Corollary~\ref{cor:h_r_s}.

\begin{lemma}[$\calT_{\Omega}$ is compact]
\label{lem:navier_friction_is_compact}
Let $\calT_{\Omega}\of{\vec u}\of{\vec v} := \int_{\bdy\Omega} \beta
\matT\vec u \cdot \matT\vec v$, $s' \le r \le s$, and $\beta \in
L^q(\bdy\Omega)$ with 
\[
q \geq \frac{r}{n'} \quad\textrm{if } r > n,
\qquad
q > n-1 \quad \textrm{if } n' \le r \le n,
\qquad
q > \frac{r'}{n'} \quad\textrm{if } r < n',
\]
and $n' = n/(n-1)$.
Then the operator $\calT_\Omega : \sob1r\Omega \rightarrow \dual{\sob1{r'}\Omega}$ is compact. 
\end{lemma}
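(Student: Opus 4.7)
The plan is to show that a bounded sequence in $W^1_r(\Omega)$ is mapped by $\calT_\Omega$ to a precompact set in $\dual{W^1_{r'}(\Omega)}$ by combining weak compactness with a compact trace embedding and H\"older's inequality. Let $\{\vec u_\ell\}\subset W^1_r(\Omega)$ be bounded. By reflexivity there is a subsequence (not relabeled) with $\vec u_\ell\rightharpoonup \vec u$ in $W^1_r(\Omega)$, and we would seek exponents $a,b\in[1,\infty]$ with $1/q+1/a+1/b = 1$ satisfying (i) the trace embedding $W^1_r(\Omega)\hookrightarrow\hookrightarrow L^a(\partial\Omega)$ is \emph{compact}, and (ii) the trace embedding $W^1_{r'}(\Omega)\hookrightarrow L^b(\partial\Omega)$ is continuous. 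Given such $a,b$, H\"older yields
\begin{equation*}
|\calT_\Omega(\vec u_\ell-\vec u)(\vec v)|
\le \|\beta\|_{L^q(\bdy\Omega)}\,\|\vec u_\ell-\vec u\|_{L^a(\bdy\Omega)}\,\|\gamma_0\vec v\|_{L^b(\bdy\Omega)}
\lesssim \|\beta\|_{L^q(\bdy\Omega)}\,\|\vec u_\ell-\vec u\|_{L^a(\bdy\Omega)}\,\|\vec v\|_{W^1_{r'}(\Omega)},
\end{equation*}
so taking the supremum over unit-norm $\vec v$ and sending $\ell\to\infty$ proves $\calT_\Omega(\vec u_\ell)\to \calT_\Omega(\vec u)$ in $\dual{W^1_{r'}(\Omega)}$.

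The main work is to verify that the three assumed ranges of $q$ really do allow exponents $(a,b)$ as above. The available exponents come from the standard trace + Sobolev embedding on the $(n-1)$-dimensional manifold $\partial\Omega$: $W^1_p(\Omega)\hookrightarrow L^c(\partial\Omega)$ with $c\le(n-1)p/(n-p)$ for $p<n$ (compactly when $c$ is strict), any $c<\infty$ for $p=n$ (compactly), and $c=\infty$ for $p>n$ (compactly by Morrey). I would handle the three regimes separately.

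In the case $r>n$ one sets $a=\infty$ (compact) and $b=(n-1)r'/(n-r')$; the algebraic identity $(r'-1)/r' = 1/r$ gives $1-1/b = n/((n-1)r) = 1/(r/n')$, so $q\ge r/n'$ is precisely the condition needed for $1/q+1/a+1/b\le 1$. In the case $n'\le r\le n$ the admissible exponents run up to (but not including) $(n-1)r/(n-r)$ for $a$ and $(n-1)r'/(n-r')$ for $b$ (with the obvious modifications when $r$ or $r'$ equals $n$); a direct computation gives
\begin{equation*}
\frac1a+\frac1b \to \frac1{n-1}\Bigl(\frac{n}{r}+\frac{n}{r'}-2\Bigr) = \frac{n-2}{n-1},
\end{equation*}
so the strict inequality in the compact embedding forces $q>n-1$. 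The case $r<n'$ is symmetric to the first: now $r'>n$, one takes $b=\infty$ with $a$ arbitrarily close to $(n-1)r/(n-r)$, and the same algebraic manipulation yields the threshold $q>r'/n'$.

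The main obstacle, and essentially the only nontrivial content, is this case analysis: one must match each integrability hypothesis on $\beta$ with the correct compact/continuous trace embedding on either side, respecting that compactness forces a strict inequality in the middle regime (hence $q>n-1$ rather than $q\ge n-1$) while the Morrey endpoint in the outer regimes remains compact and hence $q=r/n'$ (respectively $q=r'/n'$) is admissible. Once the exponents are chosen, the Rellich-type compactness of the trace makes $\|\vec u_\ell-\vec u\|_{L^a(\partial\Omega)}\to 0$ along the extracted subsequence, and the argument closes.
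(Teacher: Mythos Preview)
Your proof is correct and follows essentially the same approach as the paper: factor $\calT_\Omega$ through a compact trace embedding on the $\vec u$ side and a continuous trace embedding on the $\vec v$ side, then close with H\"older. The paper only writes out the middle case $n'\le r\le n$ and declares the other two ``similar,'' whereas you spell out all three regimes and make the compactness mechanism explicit by extracting a weakly convergent subsequence; both routes are equivalent and your case analysis of the exponent thresholds (including the reason for $\ge$ versus $>$ at the endpoints) is accurate.
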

\begin{proof}
We first observe that the projection operator $\vec T = \vec I - \vec \nu \otimes \vec \nu$ is in $L^\infty(\bdy\Omega)$. We employ Sobolev embeddings and the trace theorems. We only consider the case $n' \le r \le n$ because the other two are similar. We have the following embeddings
\begin{align*}
 \cembed{\sob1r\Omega}{L^p(\bdy\Omega)} \quad 
         \frac{1}{p} > \frac{n'}{r} - \frac{1}{n-1} ,  \\
 \cembed{\sob1{r'}\Omega}{L^t(\bdy\Omega)} \quad 
         \frac{1}{t} = \frac{n'}{r'} - \frac{1}{n-1} ,
\end{align*}
the former being compact. Since 
\[
  \Big( \frac{n'}{r} - \frac{1}{n-1} \Big) 
  + \Big( \frac{n'}{r'} - \frac{1}{n-1} \Big) 
  = 1 - \frac{1}{n-1} ,
\]
for any $q>n-1$ there exists a $p$ satisfying the above inequality as
well as $\frac{1}{q} + \frac{1}{p} + \frac{1}{t} = 1$. Therefore 
\[
  \abs{\calT_\Omega\of{\vec u}\of{\vec v}}
  \le C \norm{\beta}_{L^q(\bdy\Omega)} 
      \normSZ{\vec u}1r{\Omega}
      \normSZ{\vec v}1{r'}\Omega
\]
and $\calT_\Omega$ is compact.
\end{proof}

We remark that a sufficient condition on $q$ to ensure the conclusion of \lemref{lem:navier_friction_is_compact} is $q > n-1$. For simplicity from hereon we will work under this assumption on $q$.

\begin{definition}
[smoothing sequence] 
\label{defn:navier_slip_sequence}
Let $s > n$, $q > n -1$ and $t_{0} = r = s'$. The smoothing sequence conformal to $\calS_\Omega + \calT_{\Omega}$ is given by 
    \begin{align*}
        \frac 1 {t_m} &:= \frac 1 {t_{m-1}} - \frac{1}{n}\del{1 - \del{n-1}\frac{1}q}
        \quad \mbox{ for } m=1,\ldots, M
    \end{align*}
where $M \geq n \del{1-\frac{n-1}{q}}^{-1} \ceil{\frac{1}{t_0} -\frac{1}{2}}$ guarantees that $t_M \geq 2$. 
\end{definition}

\begin{lemma}
[$\calS_\Omega + \calT_{\Omega}$ is injective]
\label{lem:navier_friction_injective}
Let $s' \leq r \leq s$ and suppose $\beta$ is strictly positive in a set $\Gamma \subset \bdy\Omega$ of positive measure. If $\of{\vec u, p} \in X_{r}(\Omega)$ is a homogeneous solution to the Stokes problem with Navier slip boundary conditions \eqref{eq:navier_bc}, then $\of{\vec u, p} = \of{\vec 0, 0}$.
\end{lemma}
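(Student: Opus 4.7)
The plan is to mirror the two-step structure of Section~\ref{s:stokes_slip_injective}: first dispose of the reflexive range $r \geq 2$ by an energy identity, and then bootstrap from $r = s'$ up to $r = 2$ via the smoothing sequence of Definition~\ref{defn:navier_slip_sequence}.

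For $r \geq 2$, since $\Omega$ is bounded and $r \geq r'$ we are allowed to test the homogeneous equation $\calS_\Omega\of{\vec u, p} + \calT_\Omega\of{\vec u} = 0$ against $\of{\vec v, q} = \of{\vec u, p}$. The pressure-divergence contributions cancel by antisymmetry, yielding
\[
\eta\,\normLt{\vec\varepsilon\of{\vec u}}{\Omega}^2 + \int_{\bdy\Omega}\beta\,\abs{\matT\vec u}^2 = 0.
\]
Hence $\vec\varepsilon\of{\vec u} \equiv 0$ in $\Omega$, so any representative of $\vec u$ is a rigid motion $\vec A \vec x + \vec b$; combined with $\vec u\cdot\vnu = 0$ on $\bdy\Omega$ this places $\vec u \in Z\of{\Omega}$. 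The boundary term together with strict positivity of $\beta$ on $\Gamma$ gives $\matT\vec u = 0$ a.e.\ on $\Gamma$, whence $\vec u = \vec 0$ a.e.\ on $\Gamma$ since $\vec u\cdot\vnu = 0$ there. A nontrivial affine rigid motion in $\bbR^n$ can vanish only on an affine subspace of dimension at most $n{-}2$ (the skew-symmetric matrix $\vec A$ has rank either zero or at least two), hence on a set of zero $(n{-}1)$-Hausdorff measure; positivity of the $(n{-}1)$-measure of $\Gamma$ thus forces $\vec u \equiv \vec 0$ in $\Omega$, after which $p = 0$ follows from the momentum equation and the zero-mean constraint $\int_\Omega p = 0$.

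For $s' \leq r < 2$, the embedding $\cembed{X_r\of\Omega}{X_{s'}\of\Omega}$ holds (since $\Omega$ is bounded), so it suffices to treat $r = s'$. Given a homogeneous solution $\of{\vec u, p} \in X_{s'}\of{\Omega}$, the plan is to show by induction that $\of{\vec u, p} \in X_{t_m}\of{\Omega}$ for every $m = 0, \ldots, M$ of Definition~\ref{defn:navier_slip_sequence}; the case $m = M$ then gives $\of{\vec u, p} \in X_2\of{\Omega}$ and the energy argument above concludes. The inductive step follows the blueprint of Corollary~\ref{cor:glo_reg}: localize by Lemma~\ref{lem:space_decomposition} and appeal to the invertibility of the perturbed Stokes operator $\widetilde\calS$ (Theorem~\ref{thm:hs_perturbed_wellposed}) in each patch. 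The interior-patch analysis (Proposition~\ref{prop:sslip_interior_localization}) is insensitive to the boundary term $\calT_\Omega$, so Lemma~\ref{lem:interior_regularity} transfers verbatim; on a boundary patch the local equation picks up an additional inhomogeneity coming from $\calT_\Omega$ which must lie in $\dual{X_{t_m'}\of{\bbR^n_{-}}}$ whenever $\vec u \in X_{t_{m-1}}\of{\Omega}$.

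The core of the bootstrap is thus the boundary estimate
\[
\abs{\calT_\Omega\of{\vec u}\of{\vec v}} \lesssim \normL{\beta}{q}{\bdy\Omega}\,\normS{\vec u}{1}{t_{m-1}}{\Omega}\,\normS{\vec v}{1}{t_m'}{\Omega},
\]
obtained by H\"older's inequality with $1/q + 1/p + 1/t = 1$ combined with the trace embeddings $\cembed{\sob{1}{t_{m-1}}{\Omega}}{L^p\of{\bdy\Omega}}$ and $\cembed{\sob{1}{t_m'}{\Omega}}{L^t\of{\bdy\Omega}}$. The exponents $t_m$ of Definition~\ref{defn:navier_slip_sequence} are tuned so that $q > n-1$ furnishes precisely the slack needed to make both embeddings valid, the gain per step being $1/t_{m-1} - 1/t_m = \frac{1}{n}\of{1 - (n-1)/q} > 0$. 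The main obstacle I foresee is the exponent bookkeeping, since the trace theorem must be applied separately in the sub-cases $r < n'$, $n' \le r \le n$, and $r > n$, mirroring the case analysis already carried out in the proof of Lemma~\ref{lem:navier_friction_is_compact}; together with Korn's inequality (Lemma~\ref{lem:equiv_norm}) this closes the induction, and $M$ iterations deliver $\of{\vec u, p} \in X_{t_M}\of{\Omega} \subset X_2\of{\Omega}$ as required.
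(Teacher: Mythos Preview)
Your proposal is correct and follows essentially the same approach as the paper: an energy identity for $r\geq 2$ (with your added justification of why an infinitesimal rigid motion vanishing on a set of positive surface measure must be identically zero), followed by the bootstrap from $r=s'$ up to $r=2$ via the smoothing sequence of Definition~\ref{defn:navier_slip_sequence}. Your identification of $\calR_\zeta^*\calT_\Omega(\vec u)$ as the sole new inhomogeneity on boundary patches, together with the H\"older/trace estimate tuned to the step size $\tfrac{1}{n}\bigl(1-(n-1)/q\bigr)$, is exactly the content the paper compresses into the phrase ``the same induction argument as in \propref{s:sslip_somegar_injective}''; the only point you leave implicit (as does the paper) is that the $\calK_\zeta$ and $\calC$ estimates of Lemmas~\ref{lem:interior_regularity}--\ref{lem:boundary_regularity} also carry over, which they do since one may without loss of generality take $q\le s(n-1)/n$ so that the new step is no larger than the one in Definition~\ref{def:smooth_seq}.
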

\begin{proof} 
Since $\Omega$ is bounded, the case $2 < r \leq s$ follows from the
embedding $\cembed{X_{r}\of\Omega}{X_{2}\of\Omega}$. The Hilbert space
case $r=2$ follows from the coercivity estimate
    \begin{align*}
        0 & = \calS\of{\vec u, p}\of{\vec u, p}  
            + \calT_{\bdy\Omega}\of{\vec u}\of{\vec u} \\
          & = \eta \normL{\gradS{\vec u}}2{\Omega}^2 
            + \int_{\bdy\Omega}\beta \abs{\matT\vec u}^2 
          \geq \eta \normL{\gradS{\vec u}}2{\Omega}^2 
            + \beta_0 \normL{\matT \vec u}2{\Gamma}^2 
    \end{align*}
where $\beta_0 > 0$.     
Since $\normL{\gradS{u}}2{\Omega} = 0$ we recall Proposition~\ref{prop:korn_first_inequality} to conclude that $\vec u$ is an element of $Z(\Omega)$, i.e. it is an affine vector field of the form $\vec u\of{\vec x}= \mat A \vec x + \vec b$ with $\vec u \cdot \vnu = 0$ on $\bdy\Omega$. 
By using that $\matT \vec u = 0$ a.e. on $\Gamma$, we conclude that $\vec u = \vec 0$. The uniqueness of $p$, up to a constant, follows as in \secref{s:hilbert_case}.

To obtain injectivity for $s' \leq r < 2$ we suppose that $\of{\vec u, p} \in X_{t_{0}}\of{\Omega}$ is a homogeneous solution to $\calS_\Omega + \calT_{\Omega}$ with $t_0 = r = s'$. We then use the smoothing property of the Stokes operator, and the same induction argument as in \propref{s:sslip_somegar_injective}, to obtain in $M$ steps that  
    \[
        \of{\vec u,p}  \in X_{t_0}\of{\Omega} \cap \ldots \cap  X_{t_M}\of{\Omega} \subset X_{2}\of{\Omega}.
    \]
where $t_m$ is the sequence from Definition~\ref{defn:navier_slip_sequence}. This implies $\of{\vec u, p} = \of{\vec 0, 0}$ as desired.
\end{proof}


\begin{remark}[uniqueness] \rm
  If the set $\Gamma = \bdy\Omega$ in Lemma \ref{lem:navier_friction_injective},
  then we may take the set $Z=\emptyset$, i.e. the velocity field $\vec u$ is unique and the pressure is unique up to a constant, \cite{HBeiraodaVeiga_2004}.
\end{remark}

\begin{theorem}[slip with friction]
Let $\Omega$ be a bounded domain of class $W^{2-1/s}_{s}$ and $\beta$ satisfy the assumptions of Lemmas \ref{lem:navier_friction_is_compact} and \ref{lem:navier_friction_injective}. 
For every $\calF \in \dual {X_{r'}\of{\Omega}}$ there exists a unique $\of{\vec u, p} \in X_r\of{\Omega}$ such that 
    \begin{align*}
        \calS_\Omega\of{\vec u, p}\of{\vec v, q} + \calT_{\Omega}\of{\vec u}\of{\vec v}
            &= \calF\of{\vec v, q}  \quad \forall \of{\vec v, q} \in X_{r'}\of{\Omega}, 
     \intertext{and}
        \norm{(\vec u,p)}_{X_r\of{\Omega}} 
            &\leq  C_{\Omega, \eta, n, r}\norm{\calF}_{\dual{X_{r'}\of{\Omega}}},
    \end{align*}
where $s'\le r \le s$.
\end{theorem}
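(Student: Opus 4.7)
The plan is to mirror the argument used for the pure slip Stokes operator $\calS_\Omega$ in the preceding sections, treating the friction contribution $\calT_\Omega$ as a compact perturbation and then closing the argument through index theory.

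First I would verify that the perturbed operator $\calA := \calS_\Omega + \calT_\Omega$, viewed as a map from $X_r(\Omega)$ to $\dual{X_{r'}(\Omega)}$, has finite index and in fact index zero. The key inputs are the invertibility of $\calS_\Omega$ (established in the final corollary of \S\ref{s:sobolev_case}), which gives $\ind\calS_\Omega = 0$, together with the compactness of $\calT_\Omega$ furnished by \lemref{lem:navier_friction_is_compact}. Invoking the compact perturbation \lemref{lem:lax_d} we immediately obtain $\ind\calA = \ind\calS_\Omega = 0$, so $\calA$ admits a pseudoinverse in the sense of \lemref{lem:lax_c}.

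Next I would verify that both $\calA$ and its dual $\dual\calA$ are injective, so that Corollary~\ref{cor:h_r_s} applies. Injectivity of $\calA$ is precisely \lemref{lem:navier_friction_injective}. For the dual, I would copy the adjoint manipulation of Proposition~\ref{prop:dSinj}: exploiting the reflexivity of $X_{r'}(\Omega)$, any element $(\vec v,q)\in X_{r'}(\Omega)$ annihilated by $\dual\calA$ satisfies $\calS_{\Omega,r}(\vec u,p)(\vec v,q) + \calT_\Omega(\vec u)(\vec v) = 0$ for all $(\vec u,p)\in X_r(\Omega)$. Using the symmetry $\calS_{\Omega,r}(\vec u,p)(\vec v,q) = \calS_{\Omega,r'}(\vec v,-q)(\vec u,-p)$ and the obvious symmetry $\calT_\Omega(\vec u)(\vec v) = \calT_\Omega(\vec v)(\vec u)$, this reduces to $(\calS_{\Omega,r'}+\calT_\Omega)(\vec v,-q)(\vec u,-p) = 0$, which by the injectivity statement of \lemref{lem:navier_friction_injective} applied at exponent $r'$ forces $(\vec v,q) = (\vec 0,0)$.

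With the pseudoinverse property and both injectivities in hand, Corollary~\ref{cor:h_r_s} yields that $\calA$ is a bijection between $X_r(\Omega)$ and $\dual{X_{r'}(\Omega)}$, and the open mapping theorem then gives the a priori estimate
\[
   \norm{(\vec u,p)}_{X_r(\Omega)} \leq C_{\Omega,\eta,n,r}\norm{\calF}_{\dual{X_{r'}(\Omega)}}.
\]
The only delicate step is the second one: the symmetry argument for $\dual\calA$ requires that \lemref{lem:navier_friction_injective} hold at \emph{both} conjugate exponents $r$ and $r'$ simultaneously. Since the assumption $s'\leq r\leq s$ is symmetric under $r\mapsto r'$ and the hypotheses on $\beta$ in \lemref{lem:navier_friction_is_compact} are framed with both cases covered (the condition $q>n-1$ is a unified sufficient criterion for every admissible $r$), this presents no genuine obstacle, but it must be explicitly checked. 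Everything else is a mechanical assembly of the tools already prepared.
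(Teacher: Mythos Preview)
Your proposal is correct and follows the same overall strategy as the paper: treat $\calT_\Omega$ as a compact perturbation of the already-invertible $\calS_\Omega$, deduce index zero, then combine with injectivity to conclude bijectivity and invoke the Open Mapping Theorem.

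The only noteworthy difference is that the paper takes a small shortcut that spares you the dual-injectivity check. Rather than appealing to Corollary~\ref{cor:h_r_s}, the paper observes directly that $\calS_\Omega^{-1}$ is a pseudoinverse of $\calS_\Omega+\calT_\Omega$ (since $\calS_\Omega^{-1}\calT_\Omega$ and $\calT_\Omega\calS_\Omega^{-1}$ are compact), so \lemref{lem:lax_c} gives $\ind(\calS_\Omega+\calT_\Omega)=-\ind\calS_\Omega^{-1}=0$. Then from $\dim N_{\calS_\Omega+\calT_\Omega}=0$ (\lemref{lem:navier_friction_injective}) and the definition of the index one reads off $\codim R_{\calS_\Omega+\calT_\Omega}=0$ immediately, without ever touching $\dual\calA$. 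Your route through Corollary~\ref{cor:h_r_s} and the symmetry argument for $\dual\calA$ is perfectly valid, just slightly longer; the paper's direct use of the index identity is more economical.
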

\begin{proof}
The proof relies on the boundedness of $\Omega$ and the compactness of $\calT_{\Omega}$. We start by noting that $\calS_\Omega^{-1}$ is a pseudo-inverse of $\calS_\Omega + \calT_{\Omega}$, i.e. 
    \begin{equation*}
        \calS_\Omega^{-1}\of{\calS_\Omega + \calT_{\Omega}} 
            = \calI_{X_r} + \calS_{\Omega}^{-1}\calT_{\Omega} 
        ,\quad 
        \del{\calS_\Omega + \calT_{\Omega}}\calS_{\Omega}^{-1} 
            = \calI_{\dual X_{r'}} + \calT_{\Omega}\calS_{\Omega}^{-1},
    \end{equation*}
because $\calS_\Omega^{-1} \calT_\Omega$ and $\calT_\Omega\calS_\Omega^{-1}$ being the product of a bounded operator and a compact one are compact. Moreover, in view of \lemref{lem:lax_c} we have that 
    \begin{equation*}
        \ind\of{\calS_{\Omega} + \calT_{\Omega}} = -\ind \calS_{\Omega}^{-1} = 0.
    \end{equation*}
Using \lemref{lem:navier_friction_injective} and the definition of the index we have that $\codim R_{\calS_{\Omega} + \calT_{\Omega}} = \dim N_{\calS_{\Omega} + \calT_{\Omega}} = 0$, i.e. $\calS_{\Omega} + \calT_{\Omega}$ is bijective. The Open Mapping Theorem guarantees the asserted estimate. 
\end{proof}


{
  \bibliographystyle{plain}
  \bibliography{references}
}

\end{document}